\title{Characterising slopes for hyperbolic knots\\ and Whitehead doubles} 
\author{Laura Wakelin}
\address{King's College London, Strand, London, WC2R 2LS}
\email{laura.1.wakelin@kcl.ac.uk}
\urladdr{https://sites.google.com/view/laurawakelin}
\begin{document}

\pagestyle{fancy}
\fancyhead{}
\fancyhead[CE]{\scriptsize\uppercase{Laura Wakelin}} 
\fancyhead[CO]{\scriptsize\uppercase{Characterising slopes for hyperbolic knots and Whitehead doubles}}

\fancyfoot[C]{\scriptsize\thepage}

\begin{abstract}
    A slope $p/q \in \mathbb{Q}$ is characterising for a knot $K \subset \mathbb{S}^3$ if the oriented homeomorphism type of the manifold $\mathbb{S}^3_K(p/q)$ obtained by Dehn surgery of slope $p/q$ on $K$ uniquely determines the knot $K$. We combine analysis of JSJ decompositions with techniques involving lengths of shortest geodesics to find explicit conditions for a slope to be characterising for $K$ in the case where $K$ is any hyperbolic knot or any satellite knot by a hyperbolic pattern. Assuming that the list of 2-cusped orientable hyperbolic 3-manifolds obtained using the computer programme SnapPy is complete up to a certain point, we use hyperbolic volume inequalities to generate a refinement for the special case of Whitehead doubles. We also construct pairs of multiclasped Whitehead doubles of double twist knots for which $1/q$ is a non-characterising slope. 
\end{abstract}

\maketitle

\section{Introduction} 
\label{section:introduction} 

Dehn surgery is a bridge between knot theory and 3-manifolds. 
It is well-known that every closed orientable 3-manifold can be obtained by surgery on a link in the 3-sphere $\mathbb{S}^3$ \cite{lickorish, wallace}. 
This is an existence result for which uniqueness fails: all such surgery descriptions can be related by some sequence of moves \cite{kirby}. 
However, if we restrict to the case of surgery on knots, then there are several different angles from which one can investigate the question of uniqueness. 
The cosmetic surgery conjecture postulates that there cannot exist an orientation-preserving homeomorphism between surgeries of different slopes on the same (non-trivial) knot unless there is a self-homeomorphism of the knot complement taking one slope to the other. 
The Dehn surgery characterisation problem, on the other hand, asks whether it is possible to have an orientation-preserving homeomorphism between surgeries of the same (non-trivial) slope on different knots. 

\begin{definition}
    A slope $p/q \in \mathbb{Q}$ is \emph{characterising} for a knot $K \subset \mathbb{S}^3$ if for any knot $K' \subset \mathbb{S}^3$, the existence of an orientation-preserving homeomorphism $\mathbb{S}^3_K(p/q) \cong \mathbb{S}^3_{K'}(p/q)$ implies that $K=K'$. 
\end{definition} 

Every slope is characterising for the unknot, $U$ \cite{kmos}, as well as for the trefoils, $\pm T = \pm 3_1$, and the figure eight knot, $S = 4_1$ \cite{os}. Recently, it was shown that at least all the non-integer slopes are characterising for the next twist knots, $\pm R = \pm 5_2$ \cite{bs}; however, little is known about characterising slopes for the stevedore knot, $6_1$. There are also examples of knots with non-characterising slopes, such as the hyperbolic knot $8_6$ (for which all integers are non-characterising) \cite{bm}. However, Lackenby proved that every knot $K$ has infinitely many characterising slopes in the interval $[-1,1]$; moreover, when $K$ is hyperbolic, there exists a constant $C(K)$ for which every slope $p/q$ with $|q|\geq C(K)$ is characterising for $K$ \cite{lackenby}. Sorya recently proved that $C(K)$ exists for any knot $K$ \cite{sorya}. 

We can write down an explicit formula for $C(K)$ when $K$ is a torus knot \cite{nz, mccoy} and Sorya showed that setting $C(K)=2$ works when $K$ is a composite knot \cite{sorya}. However, the proof of the existence of $C(K)$ in the general case is non-constructive, which motivates this primary purpose of this article: to find concrete values for $C(K)$ when $K$ is either a hyperbolic knot or a satellite knot by a hyperbolic pattern. 

Given any compact orientable hyperbolic 3-manifold $Y$ with toroidal boundary, let $\sys(Y)$ denote its \emph{systole} (the length of a shortest geodesic in $Y$). We first introduce the quantity 
$$\mathfrak{q}(Y) := \left\lceil \sqrt{6\sqrt{3} \bigg( 1.9793 \frac{2\pi}{\sys(Y)} + 28.78 \bigg)} \ \right\rceil$$ which will arise in our search for $C(K)$. Note that this exceeds $35$ when $\sys(Y)<0.14$. 

Our main result is that for any hyperbolic knot $K$, we have a formula to compute an explicit (though perhaps not optimal) value for $C(K)$. Let $\mathbb{S}^3_K$ denote the complement of $K$. 

\begin{theorem}
\label{theorem:new}
    Let $K$ be a hyperbolic knot. 

    Then every slope $p/q$ with $|q| \geq \max\{35, \mathfrak{q}(\mathbb{S}^3_K)\}$ is characterising for $K$. 
\end{theorem}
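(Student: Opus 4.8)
The plan is to suppose there is an orientation-preserving homeomorphism $\mathbb{S}^3_K(p/q) \cong \mathbb{S}^3_{K'}(p/q)$ for some knot $K'$, to call this manifold $M$, and to recover $K$ from the hyperbolic geometry of $M$. First I would establish that $M$ is hyperbolic: since $K$ is hyperbolic, only finitely many slopes are exceptional, and the bound $|q| \geq C(K)$ forces $p/q$ to avoid them (this is one role of $\mathfrak{q}(\mathbb{S}^3_K)$, via the $6$-theorem applied to the maximal cusp of $\mathbb{S}^3_K$). With $M$ hyperbolic, I would use the knot trichotomy to show $K'$ is also hyperbolic: a torus knot has only Seifert fibred surgeries by Moser, and a satellite knot retains its companion torus under every non-integer surgery, so (as $|q| \geq 2$) the surgered manifold is toroidal; both possibilities contradict the hyperbolicity of $M$. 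This is where the JSJ analysis enters.

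Next I would locate the two surgery cores $c, c' \subset M$, whose complements are $\mathbb{S}^3_K$ and $\mathbb{S}^3_{K'}$ respectively, and argue that each is the systole of $M$. For $c$ this is concrete: converting $|q| \geq \mathfrak{q}(\mathbb{S}^3_K)$ into a lower bound on the normalized length $\hat{\ell}(p/q)$ (where the universal cusp factor $6\sqrt{3}$ appears) and feeding this into the quantitative Dehn-filling length estimates (the source of the constants $1.9793$ and $28.78$) yields $\mathrm{length}_M(c) < \sys(\mathbb{S}^3_K)$. Since geometric convergence of the fillings $\mathbb{S}^3_K(p/q) \to \mathbb{S}^3_K$ keeps every \emph{inherited} geodesic of $M$ close in length to its length in $\mathbb{S}^3_K$, and hence bounded below by roughly $\sys(\mathbb{S}^3_K)$, the core $c$ is forced to be the unique shortest geodesic of $M$, carrying a large embedded tube.

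The \emph{main obstacle} is the corresponding statement for $c'$: I cannot control $\sys(\mathbb{S}^3_{K'})$ from below, since hyperbolic knot complements can have arbitrarily short geodesics, so the symmetric argument is unavailable. Instead I would argue directly that $c'$ must coincide with the systole $c$. The idea is that the tube about $c$ is so large that a distinct core $c'$ would leave $c$ as an extremely short geodesic inside $\mathbb{S}^3_{K'} = M \setminus N(c')$, and then refilling $c'$ to recover $\mathbb{S}^3$ becomes geometrically incompatible; making this quantitative is precisely the delicate step, and I expect it to reduce to bounding the hyperbolic volume of the two-cusped manifold $M \setminus (c \cup c')$ from below and appealing to an enumeration of small-volume cusped manifolds, matching the abstract's reliance on the SnapPea census and on the $35$ threshold.

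Finally, once $c'$ is isotopic to $c$, the isotopy produces an orientation-preserving self-homeomorphism $\Phi$ of $M$ with $\Phi(c) = c'$, restricting to an orientation-preserving homeomorphism $\mathbb{S}^3_K \cong \mathbb{S}^3_{K'}$. By the Gordon--Luecke theorem the meridional slope is the unique filling of each complement yielding $\mathbb{S}^3$, so $\Phi$ carries one to the other; refilling then gives an orientation-preserving homeomorphism of $\mathbb{S}^3$ sending $K$ to $K'$, whence $K = K'$.
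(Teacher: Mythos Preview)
Your outline has two genuine gaps.

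\textbf{Ruling out satellite $K'$.} Your claim that ``a satellite knot retains its companion torus under every non-integer surgery'' is false. If $K'=J_{r,s}(\hat{K})$ is an $(r,s)$-cable and $|p-qrs|=1$, then by Gordon's formula $\mathbb{S}^3_{K'}(p/q)\cong\mathbb{S}^3_{\hat{K}}(p/qs^2)$ and the companion torus compresses; when $\hat{K}$ is hyperbolic this manifold is hyperbolic (atoroidal). So the trichotomy argument as you stated it does not eliminate all satellites. The paper's Proposition~\ref{proposition:knot} records exactly this: either $K'$ is hyperbolic, or $K'$ is the cable of a hyperbolic $\hat{K}$ with $|p-qrs|=1$. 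The cable case is then dispatched separately: the geodesic argument (your step~3) applied to $(\hat{K},p/qs^2)$ gives $\hat{K}=K$, so $K'=J_{r,s}(K)$ and $\mathbb{S}^3_K(p/q)\cong\mathbb{S}^3_K(p/qs^2)$, which contradicts the cosmetic-surgery result of Ni--Wu (Corollary~\ref{corollary:csc}).

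\textbf{Identifying the second core.} You correctly flag the asymmetry (no control on $\sys(\mathbb{S}^3_{K'})$), but your proposed resolution via volume bounds on $M\setminus(c\cup c')$ and the SnapPea census is not what is needed here, and indeed the census plays no role in Theorem~\ref{theorem:new}: it enters only the separate Whitehead-double refinement (Theorem~\ref{theorem:refined}). The actual argument is purely quantitative and symmetric in the \emph{slope}, not in the knot. Since $|q|\geq\max\{35,\mathfrak{q}(\mathbb{S}^3_K)\}$, the same filling-length estimate (Theorem~\ref{theorem:corollary6.13}) applied on the $K'$ side---with the numerical threshold set by $\sys(\mathbb{S}^3_K)$, not $\sys(\mathbb{S}^3_{K'})$---shows that the core $c'$ has length below $\min\{0.0706,\,1.9793^{-1}\cdot\sys(\mathbb{S}^3_K)\}$. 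You have already proved that $c$ is the \emph{unique} geodesic of $M$ below this threshold, so $c'=c$ immediately. The constant $35$ arises solely from making $|q|^2/(6\sqrt{3})-28.78$ large enough to push $l(c)$ under the $0.0735$ hypothesis of the bilipschitz comparison (Theorem~\ref{theorem:corollary7.20}); no enumeration of manifolds is involved.
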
 

As an example, consider the hyperbolic twist knots, such as the stevedore knot, $6_1$.  

\begin{corollary}
\label{corollary:new} 
    Let $K=T^\pm_t$ be a hyperbolic twist knot with $|t|$ full twists. 
    \begin{itemize}
        \item If $|t|\leq3$, then every slope $p/q$ with $|q|\geq 35$ is characterising for $T^\pm_t$. 
        \item If $|t|\geq4$, then every slope $p/q$ with $|q|\geq \mathfrak{q}(\mathbb{S}^3_{T^\pm_t})$ is characterising for $T^\pm_t$.  
    \end{itemize} 
\end{corollary}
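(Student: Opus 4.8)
The plan is to deduce this directly from Theorem~\ref{theorem:new}. Twist knots are hyperbolic (apart from the trefoil, which is excluded by hypothesis), so the theorem gives $C(T^\pm_t) = \max\{35,\,\mathfrak{q}(\mathbb{S}^3_{T^\pm_t})\}$, and the only remaining task is to decide which of the two terms realises the maximum for each $t$. By the observation recorded after the definition of $\mathfrak{q}$, we have $\sys(Y) < 0.14 \Rightarrow \mathfrak{q}(Y) > 35$, and unwinding the formula for $\mathfrak{q}$ gives the near-converse $\sys(Y) \geq 0.14 \Rightarrow \mathfrak{q}(Y) \leq 35$ (the true threshold being $\approx 0.1396$). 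So it suffices to prove that $\sys(\mathbb{S}^3_{T^\pm_t}) \geq 0.14$ when $|t| \leq 3$ and $\sys(\mathbb{S}^3_{T^\pm_t}) < 0.14$ when $|t| \geq 4$.

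For $|t| \leq 3$ the hyperbolic twist knots in question form a short finite list (up to mirror image, $4_1$, $5_2$, $6_1$, $7_2$ and $8_1$), and for each of these one computes the systole of the complement directly with SnapPy and checks that it exceeds $0.14$. Hence $\mathfrak{q}(\mathbb{S}^3_{T^\pm_t}) \leq 35$, so $C(T^\pm_t) = 35$ and Theorem~\ref{theorem:new} gives the first bullet; in particular this covers the Stevedore knot $6_1$.

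For $|t| \geq 4$ I would use the fact that $\mathbb{S}^3_{T^\pm_t}$ is obtained by Dehn filling one cusp of the Whitehead link complement along a slope of the form $1/t$, whose normalised length on the cusp torus grows linearly in $|t|$. By the standard short-geodesic estimates for Dehn filling (the $2\pi$-theorem and its quantitative refinements, which bound the length of the core of the filling solid torus in terms of the normalised filling length), this core geodesic has length $O(1/t^2)$, so there is an explicit $N$ with $\sys(\mathbb{S}^3_{T^\pm_t}) < 0.14$ for all $|t| \geq N$; the finitely many remaining cases $4 \leq |t| < N$ are then settled by a direct SnapPy computation, each giving $\sys < 0.14$. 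Thus $\mathfrak{q}(\mathbb{S}^3_{T^\pm_t}) > 35$ and $C(T^\pm_t) = \mathfrak{q}(\mathbb{S}^3_{T^\pm_t})$, which is the second bullet. The principal obstacle is the uniformity of this last step over the infinite family: one needs the core-length estimate in a concrete enough form to fix a usable $N$, after which the finite check closes the gap. (A minor point is to note that the different sign choices only alter the manifold by a mirror reflection or by a symmetry of the Whitehead link, and hence do not affect the systole.)
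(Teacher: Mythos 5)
Your proposal is correct and follows essentially the same route as the paper: reduce to comparing $\sys(\mathbb{S}^3_{T^\pm_t})$ with the threshold $\approx 0.1396$, verify the small-$|t|$ cases by direct SnapPea computation, and handle all sufficiently large $|t|$ by realising $\mathbb{S}^3_{T^\pm_t}$ as the $1/t$-filling of the Whitehead link complement and bounding the length of the core geodesic. The explicit cut-off $N$ that you leave open is exactly what Lemma \ref{lemma:numerical} supplies (applied with $L_0=L=W^\pm$ it gives $N=35$), after which the paper's table covers the remaining cases $4\leq|t|\leq34$.
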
 

The same idea can be used to address any satellite knot by a hyperbolic pattern $P = Q \cup U$ (where $Q \subset \mathbb{S}^1 \times \mathbb{D}^2$ is a hyperbolic knot in the solid torus which we will use to build the satellite knot). The \emph{winding number} $w$ of the pattern $P$ is taken to mean that of $Q \subset \mathbb{S}^1\times\mathbb{D}^2$. 

\begin{theorem}
\label{theorem:main}
    Let $K=P(J)$ be a satellite of a companion $J$ by a hyperbolic pattern $P = Q \cup U$ with winding number $w$. 

    Then every slope $p/q$ with $\gcd(p,w)\neq1$ and $|q|\geq\max\{35, \mathfrak{q}(\mathbb{S}^3_P)\}$ is characterising for $K$.
\end{theorem}

\begin{figure}[htbp!]
    \centering 
    \resizebox{0.25\textwidth}{!}{\centering
\begin{tikzpicture} [squarednode/.style={rectangle, draw=black, fill=white, thick, minimum size=40pt}]
    \begin{knot}[end tolerance=1pt] 
    \flipcrossings{1,4} 
        \strand[white, double=black, thick, double distance=1pt] (-1,-1.25) 
        to [out=left, in=down] (-2.5,0) 
        to [out=up, in=left] (-1,3)
        to [out=right, in=up] (-0.25,2.5)
        to [out=down, in=up] (-0.25,1) 
        to [out=down, in=up] (-2,0)
        to [out=down, in=left] (-1,-0.75); 
        \strand[white, double=black, thick, double distance=1pt] (-1,-0.75) to [out=right, in=left] (1,-0.75); 
        \strand[white, double=black, thick, double distance=1pt] (1,-0.75) 
        to [out=right, in=down] (2,0)
        to [out=up, in=down] (0.25,1)
        to [out=up, in=down] (0.25,2.5) 
        to [out=up, in=left] (1,3) 
        to [out=right, in=up] (2.5,0) 
        to [out=down, in=right] (1,-1.25); 
        \strand[white, double=black, thick, double distance=1pt] (1,-1.25) to [out=left, in=right] (-1,-1.25); 
        \strand[white, double=black, thick, double distance=1pt] (-3.25,0) 
        to [out=down, in=down] (-1.25,0) 
        to [out=up, in=up] (-3.25,0); 
    \end{knot} 
    \node[squarednode, fill=white, opacity=1] (A) at (0,-1) {\Large $-t$}; 
    \node[squarednode, fill=white, opacity=1] (B) at (0,1.75) {\Large $-n$}; 
\end{tikzpicture} } 
    \caption{The $n$-clasped $t$-twisted Whitehead link, $W^n_t = V^n_t \cup U$. } 
    \label{figure:$W^n_t$} 
\end{figure}
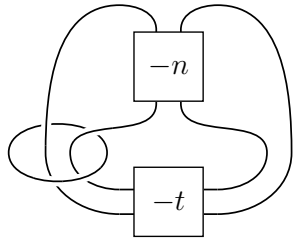 

In particular, the result applies to the special case of Whitehead doubles (which have $w=0$). 

\begin{corollary} 
\label{corollary:main} 
    Let $K=W^n_t(J)$ be an $n$-clasped $t$-twisted Whitehead double. 
    \begin{itemize}  
        \item If $|n|\leq4$, then every slope $p/q$ with $|p|\neq1$ and $|q|\geq35$ is characterising for $K$. 
        \item If $|n|\geq5$, then every slope $p/q$ with $|p|\neq1$ and $|q|\geq\mathfrak{q}(\mathbb{S}^3_{W^n_t})$ is characterising for $K$. 
    \end{itemize}
\end{corollary}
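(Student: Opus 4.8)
The plan is to deduce both parts from Theorem~\ref{theorem:main} together with the volume computations and $2$-cusped census data flagged in the abstract. The one thing to record at the outset is the role of the winding number: an $n$-clasped $t$-twisted Whitehead double has $w=0$, so $\gcd(p,w)=\gcd(p,0)=|p|$ and the hypothesis $\gcd(p,w)\neq1$ of Theorem~\ref{theorem:main} becomes exactly $|p|\neq1$. Also, for the pattern $P=W^n_t$ to be covered by Theorem~\ref{theorem:main} it only needs to be hyperbolic, and the finitely many degenerate $(n,t)$ (where $W^n_t$ is a torus or cable pattern in the solid torus) are dealt with separately, so I assume throughout that $W^n_t$ is hyperbolic. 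Granting this, the second bullet is immediate: it is the case $w=0$ of Theorem~\ref{theorem:main}, the stated bound $\mathfrak{q}(\mathbb{S}^3_{W^n_t})$ being the value of $\max\{35,\mathfrak{q}(\mathbb{S}^3_{W^n_t})\}$ in the regime $|n|\geq5$, where the pattern--link systoles are small enough that $\mathfrak{q}\geq35$.

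The content is in the first bullet, since $\mathfrak{q}(\mathbb{S}^3_{W^n_t})$ genuinely exceeds $35$ for many $t$ --- as $|t|\to\infty$ the core of the crossing circle around the twist region becomes a short geodesic in $\mathbb{S}^3_{W^n_t}$, so $\sys(\mathbb{S}^3_{W^n_t})\to0$ --- and yet we want the uniform bound $|q|\geq35$. I would revisit the proof of Theorem~\ref{theorem:main} and separate its two kinds of input. The JSJ/homology part, which uses only $|q|\geq35$, produces a competitor $K'=P'(C)$ with the same companion $C$, a hyperbolic winding-number-$0$ pattern $P'$, and a slope $p'/q'$ with $|q'|=|q|$, $|p'|\neq1$, such that $\mathbb{S}^3_{P'}(p'/q')\cong\mathbb{S}^3_{P}(p/q)=:N$. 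The hyperbolic-geometry part, which is where $\mathfrak{q}(\mathbb{S}^3_P)$ enters, is then invoked purely to certify that the core of the $p/q$-filling is the unique shortest geodesic of $N$, so that drilling it recovers $\mathbb{S}^3_P\cong\mathbb{S}^3_{P'}$ with matching filling slopes and hence $K'=K$. It is only this last step that I would replace when $|n|\leq4$.

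The replacement is a volume/census argument. Since $\mathbb{S}^3_{W^n_t}$ is obtained by Dehn filling one cusp of a fixed $3$-cusped parent $M^n$ (the complement of the pattern together with the twist-region crossing circle), strict monotonicity of volume under filling gives $\mathrm{vol}(N)<\mathrm{vol}(\mathbb{S}^3_{W^n_t})<\mathrm{vol}(M^n)\leq V$, with $V:=\max_{|n|\leq4}\mathrm{vol}(M^n)$ an explicit number from SnapPy. Applying quantitative Dehn filling to the knot cusp of $\mathbb{S}^3_{P'}$ --- using the maximal-cusp-area bound $6\sqrt{3}$ and $|q'|\geq35$ to force a large normalised slope length, which is exactly where the constants $1.9793$ and $28.78$ appearing in $\mathfrak{q}$ come from --- bounds $\mathrm{vol}(\mathbb{S}^3_{P'})$ above by an explicit $V'$ only marginally larger than $V$. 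Assuming the SnapPea list of $2$-cusped orientable hyperbolic $3$-manifolds is complete below volume $V'$, there are then only finitely many candidates for $\mathbb{S}^3_{P'}$; for each I would run through its admissible fillings, using its canonical Epstein--Penner decomposition, and check that no filling of slope $p'/q'$ with $|q'|\geq35$, $|p'|\neq1$ and winding number $0$ on a cusp whose meridian filling yields $\mathbb{S}^3$ can be homeomorphic to $\mathbb{S}^3_{W^n_t}(p/q)$ unless the candidate is $\mathbb{S}^3_{W^n_t}$ and the slope is $p/q$. That forces $P'=W^n_t$ and $p'/q'=p/q$, hence $K'=K$. The restriction $|n|\leq4$ is precisely the range in which $V'$ stays below the point up to which the census is taken to be complete.

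The main obstacle is this per-candidate verification: it is finite but delicate, because the target $N$ itself depends on $(n,t,p,q)$, so each census manifold must be ruled out uniformly over all these parameters, which in practice requires rigorously verified (interval-arithmetic) hyperbolic structures and careful bookkeeping of which cusp of a given candidate can play the role of the meridional cusp of a pattern in a solid torus. A secondary obstacle, needed to make the volume estimate uniform, is showing that $|q'|\geq35$ on its own already pushes the normalised filling length past the threshold at which the quoted Dehn-filling constants apply --- equivalently, bounding below, independently of $P'$, the length on the maximal cusp of the slope being filled.
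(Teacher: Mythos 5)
Your reduction of the second bullet to Theorem~\ref{theorem:main} is essentially the paper's argument, but your treatment of the first bullet rests on a false geometric premise. You claim that as $|t|\to\infty$ the crossing-circle core becomes a short geodesic in $\mathbb{S}^3_{W^n_t}$, so that $\sys(\mathbb{S}^3_{W^n_t})\to0$ and $\mathfrak{q}(\mathbb{S}^3_{W^n_t})$ exceeds $35$ for many $t$. This is not so: in the link $W^n_t=U\cup V^n_t$ the twist region is encircled by the component $U$, which is \emph{not} filled, so $t$-twisting is realised by cutting along the twice-punctured disc bounded by $U$ and regluing -- a homeomorphism of link complements. Hence $\mathbb{S}^3_{W^n_t}\cong\mathbb{S}^3_{W^n_0}$ for every $t$ and the systole is independent of $t$ (the paper states this explicitly; the phenomenon you describe occurs for the knots $T^n_t$, where the crossing circle has been filled and forgotten, not for the links $W^n_t$). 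Consequently the first bullet needs no new mechanism at all: SnapPea gives $\sys(\mathbb{S}^3_{W^n})\approx1.061,\,0.773,\,0.352,\,0.198$ for $|n|=1,2,3,4$, all exceeding $0.14$, so $\mathfrak{q}(\mathbb{S}^3_{W^n_t})\leq35$ and $\max\{35,\mathfrak{q}(\mathbb{S}^3_{W^n_t})\}=35$; Theorem~\ref{theorem:main} with $w=0$ finishes it. The census/volume machinery you substitute is both unnecessary and unworkable here: it would make the corollary conditional on completeness of the SnapPea census (which the unconditional statement is not, and which the paper reserves for Theorem~\ref{theorem:refined} with $|n|=1$ only), and for $|n|\in\{2,3,4\}$ the volume of $\mathbb{S}^3_{W^n}=\mathbb{S}^3_B(1/n)$ already exceeds the volume of the $3$-chain link complement, the conjectural first limit point of $2$-cusped volumes, so your candidate list would not be finite.

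A secondary gap: for the second bullet you assert, without proof, that $\mathfrak{q}(\mathbb{S}^3_{W^n_t})\geq35$ for all $|n|\geq5$. This is genuinely needed (otherwise the stated bound $\mathfrak{q}(\mathbb{S}^3_{W^n_t})$ would be weaker than the $\max\{35,\cdot\}$ that Theorem~\ref{theorem:main} provides), and it requires an argument covering infinitely many $n$. The paper checks $\sys(\mathbb{S}^3_{W^n})<0.14$ from the length spectrum for $5\leq|n|\leq34$ and, for $|n|\geq35$, uses $\mathbb{S}^3_B(1/n)\cong\mathbb{S}^3_{W^n}$ for the Borromean rings $B$ together with Lemma~\ref{lemma:numerical} to show the core of the $1/n$-filling is a geodesic of length less than $0.0706<0.14$. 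You should supply something of this kind.
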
 

Our strategy uses JSJ decompositions to reduce the problem to one about the pattern $P$, which can be addressed using lengths of minimal geodesics. Moreover, when the pattern is given by the Whitehead link, we can instead employ an alternative argument which relies on hyperbolic volume: crucially, the Whitehead link complement and its sister are known to be the smallest amongst all 2-cusped orientable hyperbolic 3-manifolds \cite{agol}. This leads to a better bound, but relies on the conjectural completeness of the SnapPy census \cite{SnapPy} of such manifolds up to a given point. That is to say, let $V_k$ denote the $k^\text{th}$ volume which appears and let $a_k$ denote the number of SnapPy census manifolds with volume at most $V_k$. We say that the SnapPy census is \emph{complete up to stage $k$} if it includes every 2-cusped orientable hyperbolic 3-manifold of volume at most $V_k$. 

\begin{theorem} 
\label{theorem:refined}
    Let $K=W^\pm_t(J)$ be a $\pm1$-clasped $t$-twisted Whitehead double for \mbox{$\pm t\in \{-1, 0, 1, 2\}$}. 
    
    Then every slope $p/q$ with $|p|\neq1$ and $|q|\geq q_{\min}$ is characterising for $K$, where $q_{\min}$ is some constant determined by the Whitehead link and independent of $J$. 

    Furthermore, if the SnapPy census of 2-cusped orientable hyperbolic 3-manifolds is complete up to stage $k$ of Table \ref{table:stages} (namely, up to the volume bound $V_k$, or up to the first $a_k$ items), then we can take $q_{\min}$ to be $q_k$. 
\end{theorem}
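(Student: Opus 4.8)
The plan is to import the JSJ-and-geodesics machinery developed for Theorem~\ref{theorem:main} and to strengthen its final identification step using Agol's determination of the two smallest $2$-cusped orientable hyperbolic $3$-manifolds together with the SnapPea census. So suppose $h\colon \mathbb{S}^3_K(p/q)\to\mathbb{S}^3_{K'}(p/q)$ is an orientation-preserving homeomorphism with $|p|\neq 1$ and $|q|$ large. First I would run the JSJ analysis exactly as for Theorem~\ref{theorem:main}: since the winding number of $W^\pm_t$ is $0$ we have $\gcd(p,0)=|p|\neq 1$, which is enough to force $K'$ to be a satellite with the same companion $C$, with a hyperbolic pattern $P'=U'\cup Q'$ of winding number $0$, and to force $h$ to carry the companion torus to the companion torus and the pattern piece to the pattern piece. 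Setting $N:=\mathbb{S}^3_{W^\pm_t}(p/q)$, the winding-number-zero framing correspondence then gives $\mathbb{S}^3_{P'}(\rho')\cong N$, where $\rho'$ is precisely the slope $p/q$ taken with respect to the Seifert framing of $Q'\subset\mathbb{S}^1\times\mathbb{D}^2$; and an explicit $6$-theorem estimate on the cusps of the Whitehead link shows $N$ is hyperbolic once $|q|$ passes a small explicit threshold.

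Next I would do the length bookkeeping for $\rho'$. Filling the meridian of $Q'$ returns the solid torus, which admits no hyperbolic structure, so that meridian has length at most $6$ in the maximal cusp of $\mathbb{S}^3_{P'}$; combining this with the universal lower bound on the area of a maximal cusp torus yields a bound $\ell(\rho')\ge c\,|q|$ for a universal constant $c>0$. Hence for $|q|$ beyond an explicit threshold we have $\ell(\rho')>2\pi$ and may apply the Futer--Kalfagianni--Purcell volume estimate for Dehn filling, which bounds $\operatorname{vol}(\mathbb{S}^3_{P'})$ above by $\operatorname{vol}(N)/\bigl(1-(2\pi/\ell(\rho'))^2\bigr)^{3/2}$, an explicit function of $|q|$ decreasing to $\operatorname{vol}(N)$; likewise the core $K'^*$ of the $\rho'$-filling is short in $N$, with length tending to $0$ as $|q|\to\infty$.

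The core of the refinement is then a volume pinch. Because volume strictly decreases under Dehn filling and $\mathbb{S}^3_{W^\pm_t}$ is the Whitehead link complement or its sister — so has volume exactly the regular ideal octahedron volume $V_{\mathrm{oct}}$ — for every admissible $\pm t\in\{-1,0,1,2\}$, we get $\operatorname{vol}(N)<V_{\mathrm{oct}}$ and hence $\operatorname{vol}(\mathbb{S}^3_{P'})<V_{\mathrm{oct}}/\bigl(1-(2\pi/\ell(\rho'))^2\bigr)^{3/2}$. On the other hand $\mathbb{S}^3_{P'}$ is a $2$-cusped orientable hyperbolic $3$-manifold, so Agol gives $\operatorname{vol}(\mathbb{S}^3_{P'})\ge V_{\mathrm{oct}}$, with equality only for the Whitehead link complement and its sister. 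Choosing $|q|\ge q_{\min}$ large enough that the upper bound drops below the volume $V_k$ up to which the SnapPea census of $2$-cusped orientable hyperbolic $3$-manifolds is assumed complete pins $\mathbb{S}^3_{P'}$ to the finite list of census manifolds of volume in $[V_{\mathrm{oct}},V_k)$; a case check (or, if $V_k$ is taken below the third-smallest volume, no check at all) then eliminates every possibility except the Whitehead link complement and its sister, so $\operatorname{vol}(\mathbb{S}^3_{P'})=V_{\mathrm{oct}}$ and $\mathbb{S}^3_{P'}\cong\mathbb{S}^3_{W^\pm_t}$. Unwinding the length/volume inequalities, the smallest such $|q|$ is exactly the value $q_k$ recorded in Table~\ref{table:stages}, and it shrinks as more of the census is known; because every quantity in the chain depends only on the Whitehead link and on universal constants, $q_{\min}$ is independent of the companion $C$.

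It then remains to recover $K'$. Knowing $\mathbb{S}^3_{P'}\cong\mathbb{S}^3_{W^\pm_t}$ — a rigid manifold whose finite isometry group preserves the axis cusp and fixes the Seifert framing of the pattern knot — Mostow rigidity upgrades the restriction of $h$ to the pattern pieces to an isometry; since $K^*$ is the unique shortest geodesic of $N$ for $|q|$ large (its length tends to $0$ while the next shortest geodesic length tends to $\operatorname{sys}(\mathbb{S}^3_{W^\pm_t})>0$), and $K'^*$ is likewise short in $\mathbb{S}^3_{P'}(\rho')\cong N$, this isometry carries $K'^*$ to $K^*$; drilling these cores and regluing along the companion torus produces an orientation-preserving homeomorphism $\mathbb{S}^3_K\cong\mathbb{S}^3_{K'}$, whence $K=K'$ by Gordon--Luecke. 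I expect the main obstacle to be the volume pinch: extracting from Agol's theorem and the conjectural census a genuinely small explicit $q_{\min}=q_k$ requires optimising the whole chain of estimates — the constant $c$, the Futer--Kalfagianni--Purcell function, and the choice of census stage — rather than settling for crude bounds, and checking that each census manifold below $V_k$ really can be excluded as $\mathbb{S}^3_{P'}$. Secondary technical points are confirming that each $W^\pm_t$ with $\pm1$ clasp and $\pm t\in\{-1,0,1,2\}$ has complement of volume exactly $V_{\mathrm{oct}}$, so that Agol's inequality is sharp, and treating the case in which $\mathbb{S}^3_{P'}$ is the sister of the Whitehead link complement by the evident symmetry.
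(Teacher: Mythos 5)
Your overall architecture (JSJ reduction to the pattern piece, Futer--Kalfagianni--Purcell volume bound on $\mathbb{S}^3_{P'}$, census completeness to pin down the pattern complement) matches the paper, but two of the steps you compress into a sentence are where the real content lies, and one of them as you state it would fail. First, the ``volume pinch'' does not reduce to Agol's rigidity statement plus ``no check at all'': the sister $m125$ has volume exactly $V_{\mathrm{oct}}$, so it is never excluded by any volume inequality, and for every stage $k$ of Table \ref{table:stages} there are many census manifolds with volume strictly between $V_{\mathrm{oct}}$ and $V_k$ (e.g.\ $m202$, $m203$ already at $4.0597$, below $V_1$). The paper's Proposition \ref{proposition:elimination} eliminates each of these by showing it cannot be the complement of a link $\tilde{U}\cup\tilde{Q}$ with $\tilde U$ unknotted and linking number zero, using torsion in $H_1$, identified linking numbers, multiple solid-torus fillings (Lemma \ref{lemma:berge-gabai}), and the finite-homology filling criterion (Lemma \ref{lemma:free}); in particular $m125$ is ruled out because it is the complement of a link with linking number $5$, not by any ``evident symmetry'' with $m129$. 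This case analysis is the theorem's main technical content and cannot be waved away.

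Second, your concluding step is a genuine gap. Knowing $\mathbb{S}^3_{P'}\cong\mathbb{S}^3_{W^\pm_t}$ does not determine $P'$, since links are not determined by their complements; the paper needs Lemma \ref{lemma:twisted} to get $P'=W^\pm_{t'}$ for some $t'$, and then Lemma \ref{lemma:knotting} --- i.e.\ the known characterising-slope results for $U$, $\pm T$, $S$, $\pm R$ --- to force $t'=t$. This is precisely where the hypothesis $\pm t\in\{-1,0,1,2\}$ enters, and your argument never uses it, which is the tell-tale sign of the missing step. Your substitute (identifying the surgery cores as unique shortest geodesics and appealing to Gordon--Luecke) is the mechanism of Theorems \ref{theorem:new} and \ref{theorem:main}, but it is quantitatively unavailable here: Lemma \ref{lemma:numerical} needs $|q|\geq 35$ just to make the core short enough ($\leq 0.0735$) for Theorem \ref{theorem:corollary7.20} to apply, whereas the refined theorem claims $q_{\min}$ as low as $24$. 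At $|q|=24$ the core-length bound is about $0.24$ and the geodesic machinery breaks down, so your final step cannot deliver the improved constants that are the point of the statement. You also drop the compressing-cable case of Proposition \ref{proposition:pattern} (where $P'=J_{r,s}(\hat P)$ and $|p'-q'rs|=1$), which the paper disposes of via Lemma \ref{lemma:cabled_twisted} and the cosmetic-surgery Corollary \ref{corollary:csc}.
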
 

\begin{table}[htbp!]
\centering
    \begin{tabular}{|c|c|c|c|}
        \hline 
        $k$ & $V_k$ & $a_k$ & $q_k$ \\ \hline \hline 
        $1$ & $4.0464$ & $4$ & $43$ \\ \hline 
        $2$ & $4.4057$ & $6$ & $32$ \\ \hline 
        $3$ & $4.5275$ & $8$ & $30$ \\ \hline 
        $4$ & $4.6842$ & $10$ & $28$ \\ \hline 
        $5$ & $4.7801$ & $16$ & $27$ \\ \hline 
        $6$ & $4.8913$ & $18$ & $26$ \\ \hline 
        $7$ & $5.0212$ & $24$ & $25$ \\ \hline 
        $8$ & $5.1749$ & $57$ & $24$ \\ \hline 
    \end{tabular} 
\caption{The values of $V_k$, $a_k$ and $q_k$ for each stage $k$.}  
\label{table:stages}
\end{table} 

In order to demonstrate the reason for imposing a condition on the numerator of the slope, for each $1/q$ we construct an infinite family of pairs of distinct multiclasped Whitehead doubles of double twist knots sharing a surgery of slope $1/q$. These are depicted in Figure \ref{figure:$W^n(T^m_q)$} (where the extra twisting compensates for the writhe of the companion knot). A value of $C(K)$ for such a knot $K$ must therefore involve the companion knot. 

\begin{theorem} 
\label{theorem:non-characterising}
    Let $K = W^n(T^m_q)$ and $K' = W^m(T^n_q)$ for any choices of $q,m,n\in\mathbb{Z}\setminus\{0\}$. 

    Then there exists an orientation-preserving homeomorphism $\mathbb{S}^3_K(1/q)\cong\mathbb{S}^3_{K'}(1/q)$. 

    In particular, when $m \neq n$, $1/q$ is a non-characterising slope for both $K$ and $K'$. 
\end{theorem}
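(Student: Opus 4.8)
The plan is to realise $\mathbb{S}^3_K(1/q)$ and $\mathbb{S}^3_{K'}(1/q)$ as two Dehn fillings of a single compact $3$-manifold $M$ (depending only on $q$) equipped with two distinguished torus boundary components, and to produce an orientation-preserving self-homeomorphism of $M$ interchanging those boundary tori and carrying the one filling to the other. To set this up, recall that the double twist knot $T^m_q$ is obtained from an unknot by twisting $m$ and $q$ times along two unknotted circles $\beta,\gamma$ — one for each twist region — and that the passage $C\mapsto W^n(C)$ to the $n$-clasped (Seifert-framed) Whitehead double is obtained by twisting $n$ times along an unknotted clasp circle $\alpha$. Hence there is a fixed link $\Lambda=\kappa\cup\alpha\cup\beta\cup\gamma\subset\mathbb{S}^3$, with $\kappa$ unknotted, in which the surgeries of coefficients $(-1/n,-1/m,-1/q)$ on $(\alpha,\beta,\gamma)$ turn $\kappa$ into $K=W^n(T^m_q)$. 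Since a Whitehead pattern has winding number $0$, every linking number among $\kappa,\alpha,\beta,\gamma$ vanishes, so these twisting surgeries preserve Seifert framings; therefore the slope $1/q$ on $K$ pulls back to the slope $1/q$ on $\kappa$, and
$$\mathbb{S}^3_K(1/q)=\mathbb{S}^3_\Lambda\bigl(1/q,\,-1/n,\,-1/m,\,-1/q\bigr),$$
all coefficients being read in the framings inherited from $\mathbb{S}^3$. Leaving $\alpha$ and $\beta$ unfilled, let $M$ be the result of the $1/q$- and $(-1/q)$-surgeries on $\kappa$ and $\gamma$ performed in $\mathbb{S}^3\setminus\nu(\alpha\cup\beta)$. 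Then $M$ depends only on $q$, its two distinguished cusps are $\partial\nu(\alpha)$ and $\partial\nu(\beta)$, and $\mathbb{S}^3_K(1/q)=M(-1/n,-1/m)$; running the identical construction for $K'=W^m(T^n_q)$ — same fixed link $\Lambda$, with only the clasp count and the first companion-twist parameter exchanged — yields $\mathbb{S}^3_{K'}(1/q)=M(-1/m,-1/n)$, the fillings being expressed in the same meridian–longitude framings of $\alpha$ and $\beta$.

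It therefore suffices to produce an orientation-preserving homeomorphism of $M$ that swaps the cusps $\partial\nu(\alpha)$ and $\partial\nu(\beta)$ and carries $(\mu_\alpha,\lambda_\alpha)$ to $(\mu_\beta,\lambda_\beta)$, and this is the heart of the argument. I would obtain it by redrawing the surgery link in a symmetric form and applying a sequence of Kirby moves — Rolfsen twists at $\kappa$ and at $\gamma$, handle slides, and blow-downs — to convert the manifestly asymmetric link $\Lambda$ into one admitting an involution interchanging $\alpha$ and $\beta$. The coincidence being exploited is precisely that the denominator of the surgery slope equals the twist coefficient carried by $\gamma$: it is only when these agree that the $\kappa$- and $\gamma$-surgeries can be absorbed so as to place the clasp circle and the remaining companion-twist circle on a symmetric footing — which is also why the construction requires both winding number $0$ and slope numerator $1$, matching the conditions imposed in Theorem~\ref{theorem:main} and Corollary~\ref{corollary:main}. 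I expect this symmetrisation to be the main obstacle: both finding the right sequence of moves and then checking that the resulting involution respects the meridian–longitude framings, so that it genuinely intertwines the $-1/n$ and $-1/m$ fillings. The requisite manipulations are the content of Figure~\ref{figure:$W^n(T^m_q)$}, where the extra twists drawn are exactly the framing bookkeeping that keeps everything Seifert-framed. Granting the homeomorphism, it carries $M(-1/n,-1/m)=\mathbb{S}^3_K(1/q)$ to $M(-1/m,-1/n)=\mathbb{S}^3_{K'}(1/q)$ orientation-preservingly, which is the first assertion.

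For the final claim it remains to verify $K\neq K'$ when $m\neq n$; the displayed homeomorphism then immediately shows $1/q$ is non-characterising for each of $K$ and $K'$. Here I would compare JSJ decompositions. For $n\neq0$ the Whitehead pattern is geometrically essential in the solid torus and is not its core, and $T^m_q$ is a non-trivial knot (a double twist knot with both twist parameters non-zero), so the companion torus is essential in $\mathbb{S}^3_K$ and the companion knot is an invariant of $K$, namely $T^m_q$; likewise $K'$ has companion $T^n_q$. Double twist knots are $2$-bridge, and by the $2$-bridge classification $T^m_q\cong T^n_q$ forces $m=n$ when $q\neq0$; hence $K\neq K'$. (Equivalently and more quickly: since the pattern has winding number $0$, the satellite formula gives $\Delta_K(t)\doteq\Delta_{V^n_0}(t)$ and $\Delta_{K'}(t)\doteq\Delta_{V^m_0}(t)$ independently of the companion, and the Alexander polynomials $\Delta_{V^k_0}$ of the $k$-clasped $0$-twisted Whitehead patterns are pairwise distinct for distinct $k\in\mathbb{Z}$.) This completes the proof.
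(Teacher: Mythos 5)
Your reduction of the problem to a single $4$-cusped manifold $M$ (fill $\kappa$ with $1/q$ and $\gamma$ with $-1/q$, leave $\alpha,\beta$ open) is sound, and the framing bookkeeping via vanishing linking numbers is correct; packaged this way, the theorem becomes the assertion that $M$ admits an orientation-preserving self-homeomorphism exchanging the $\alpha$- and $\beta$-cusps together with their meridian--longitude framings. But that assertion is the entire content of the theorem, and you do not prove it: you state that you ``would obtain it by redrawing the surgery link in a symmetric form and applying a sequence of Kirby moves'' and yourself flag the symmetrisation as ``the main obstacle.'' Nothing in your write-up exhibits the involution or verifies that it respects the framings, so the first (and main) claim of the theorem is left unestablished. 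For comparison, the paper closes exactly this gap by a JSJ argument rather than Kirby calculus: since the two components of the \emph{untwisted} Whitehead link $W^n=U\cup V^n$ are exchangeable by isotopy, a Rolfsen $-q$-twist (Figure \ref{figure:1/q}) gives $\mathbb{S}^3_{W^n}(1/q)\cong\mathbb{S}^3_{T^n_q}$ and shows the induced identification on the swallow--follow JSJ torus sends $\mu_{T^m_q}\mapsto\lambda_{T^n_q}$ and $\lambda_{T^m_q}\mapsto\mu_{T^n_q}$; both $\mathbb{S}^3_K(1/q)$ and $\mathbb{S}^3_{K'}(1/q)$ are then $\mathbb{S}^3_{T^m_q}\cup_{\mathbb{T}^2}\mathbb{S}^3_{T^n_q}$ glued by the same meridian--longitude exchange, and the homeomorphism is immediate. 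If you want to keep your surgery-diagram formulation, you must actually carry out the isotopy/Kirby sequence (this is what the $-2(m+q)$ and $-2(n+q)$ twist boxes in Figure \ref{figure:$W^n(T^m_q)$} are recording), or else import the component-exchange symmetry of $W^n$ as above.

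A secondary but genuine error: your parenthetical ``more quickly'' argument that $K\neq K'$ via Alexander polynomials fails. Since the untwisted pattern applied to the unknot gives $W^k(U)=V^k_0=T^k_0=U$ and the winding number is zero, the satellite formula yields $\Delta_{W^k(C)}(t)\doteq\Delta_{V^k_0}(t)\cdot\Delta_C(t^0)\doteq 1$ for every $k$ and every $C$; all of these Whitehead doubles have trivial Alexander polynomial, so the polynomials $\Delta_{V^k_0}$ are certainly not pairwise distinct. Your primary argument --- that the companion is a JSJ invariant of the satellite and that $T^m_q\neq T^n_q$ for $m\neq n$, $q\neq 0$ (e.g.\ by the $2$-bridge classification, or because $\Delta_{T^m_q}$ determines $mq$) --- is correct and matches the paper's appeal to ``the geometry of their complements,'' so the final claim survives once the first one is actually proved.
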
 

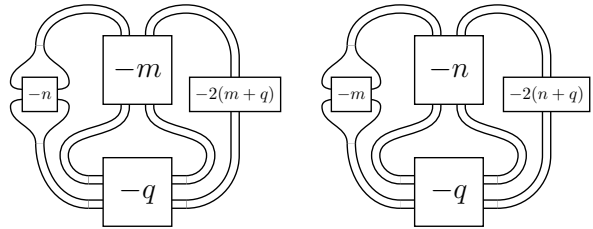
\begin{figure}[htbp!] 
    \centering
    \resizebox{0.6\textwidth}{!}{\centering
\begin{tikzpicture} [squarednode/.style={rectangle, draw=black, fill=white, thick, minimum size=40pt}]
    \begin{knot} 
        \strand[black, double=white, thick, double distance=4pt] (-1,-1) 
        to [out=left, in=down] (-2,0.25);  
        \strand[black, thick] (-2.085,0.25) 
        to [out=up, in=down] (-2.6,1)
        to [out=up, in=left] (-2.3,1.15); 
        \strand[black, thick] (-2.3,1.35) 
        to [out=left, in=down] (-2.6,1.5) 
        to [out=up, in=down] (-2.085,2.25); 
        \strand[black, thick] (-1.915,0.25) 
        to [out=up, in=down] (-1.4,1)
        to [out=up, in=right] (-1.7,1.15); 
        \strand[black, thick] (-1.7,1.35) 
        to [out=right, in=down] (-1.4,1.5) 
        to [out=up, in=down] (-1.915,2.25);         
        \strand[black, double=white, thick, double distance=4pt] (-2,2.25) 
        to [out=up, in=left] (-1,3)
        to [out=right, in=up] (-0.25,2.5)
        to [out=down, in=up] (-0.25,1) 
        to [out=down, in=up] (-1.5,0)
        to [out=down, in=left] (-1,-0.5); 
        \strand[black, double=white, thick, double distance=4pt] (-1,-0.5) 
        to [out=right, in=left] (1,-0.5); 
        \strand[black, double=white, thick, double distance=4pt] (1,-0.5) 
        to [out=right, in=down] (1.5,0)
        to [out=up, in=down] (0.25,1)
        to [out=up, in=down] (0.25,2.5) 
        to [out=up, in=left] (1,3) 
        to [out=right, in=up] (2,2) 
        to [out=down, in=up] (2,1.6)
        to [out=down, in=up] (2,0.9) 
        to [out=down, in=up] (2,0) 
        to [out=down, in=right] (1,-1); 
        \strand[black, double=white, thick, double distance=4pt] (1,-1) 
        to [out=left, in=right] (-1,-1); 
    \end{knot} 
    \node[squarednode, fill=white, opacity=1] at (0,-0.75) {\Large $-q$}; 
    \node[squarednode, fill=white, opacity=1] at (0,1.75) {\Large $-m$}; 
    \node[rectangle, draw=black, fill=white, fill opacity=1, thick, minimum size=20pt] at (-2,1.25) {$-n$}; 
    \node[rectangle, draw=black, fill=white, fill opacity=1, thick, minimum size=20pt] at (2,1.25) {$-2(m+q)$}; 
\end{tikzpicture} 
\qquad 
\begin{tikzpicture} [squarednode/.style={rectangle, draw=black, fill=white, thick, minimum size=40pt}]
    \begin{knot} 
        \strand[black, double=white, thick, double distance=4pt] (-1,-1) 
        to [out=left, in=down] (-2,0.25);  
        \strand[black, thick] (-2.085,0.25) 
        to [out=up, in=down] (-2.6,1)
        to [out=up, in=left] (-2.3,1.15); 
        \strand[black, thick] (-2.3,1.35) 
        to [out=left, in=down] (-2.6,1.5) 
        to [out=up, in=down] (-2.085,2.25); 
        \strand[black, thick] (-1.915,0.25) 
        to [out=up, in=down] (-1.4,1)
        to [out=up, in=right] (-1.7,1.15); 
        \strand[black, thick] (-1.7,1.35) 
        to [out=right, in=down] (-1.4,1.5) 
        to [out=up, in=down] (-1.915,2.25);         
        \strand[black, double=white, thick, double distance=4pt] (-2,2.25) 
        to [out=up, in=left] (-1,3)
        to [out=right, in=up] (-0.25,2.5)
        to [out=down, in=up] (-0.25,1) 
        to [out=down, in=up] (-1.5,0)
        to [out=down, in=left] (-1,-0.5); 
        \strand[black, double=white, thick, double distance=4pt] (-1,-0.5) 
        to [out=right, in=left] (1,-0.5); 
        \strand[black, double=white, thick, double distance=4pt] (1,-0.5) 
        to [out=right, in=down] (1.5,0)
        to [out=up, in=down] (0.25,1)
        to [out=up, in=down] (0.25,2.5) 
        to [out=up, in=left] (1,3) 
        to [out=right, in=up] (2,2) 
        to [out=down, in=up] (2,1.6)
        to [out=down, in=up] (2,0.9) 
        to [out=down, in=up] (2,0) 
        to [out=down, in=right] (1,-1); 
        \strand[black, double=white, thick, double distance=4pt] (1,-1) 
        to [out=left, in=right] (-1,-1); 
    \end{knot} 
    \node[squarednode, fill=white, opacity=1] at (0,-0.75) {\Large $-q$}; 
    \node[squarednode, fill=white, opacity=1] at (0,1.75) {\Large $-n$}; 
    \node[rectangle, draw=black, fill=white, fill opacity=1, thick, minimum size=20pt] at (-2,1.25) {$-m$}; 
    \node[rectangle, draw=black, fill=white, fill opacity=1, thick, minimum size=20pt] at (2,1.25) {$-2(n+q)$}; 
\end{tikzpicture} }
    \caption{A pair of knots, $K=W^n(T^m_q)$ and $K'=W^m(T^n_q)$, sharing a $1/q$-surgery. } 
    \label{figure:$W^n(T^m_q)$}
\end{figure}

\subsection{Outline}

The structure of this paper is as follows. In Section \ref{section:preliminaries}, we review some key concepts and relevant results. In Section \ref{section:jsj}, we use a JSJ decomposition argument to show that any knot sharing the same surgery as our fixed knot must be of a very similar type. In Section \ref{section:geodesics}, we employ a minimal geodesic argument to show that in fact the only possibility is for both knots to be exactly the same, thus completing the proofs of Theorems \ref{theorem:new} and \ref{theorem:main}. In Section \ref{section:volume}, we discuss hyperbolic volume and describe our alternative strategy for Whitehead doubles, which proves Theorem \ref{theorem:refined}. In Section \ref{section:non-characterising}, we construct explicit examples of pairs of multiclasped Whitehead doubles sharing a surgery of slope $1/q$ as described in Theorem \ref{theorem:non-characterising}.

\subsection{Acknowledgements} 

The author would like to thank Steven Sivek for his invaluable guidance, as well as Marc Lackenby and Patricia Sorya for many helpful conversations. 
She would additionally like to thank David Gabai and Duncan McCoy for their suggested improvements which inspired some of the new results in the final version of this article. 
This work was supported by the Engineering and Physical Sciences Research Council (EPSRC) Centre for Doctoral Training in Geometry and Number Theory at the Interface (The London School of Geometry and Number Theory), University College London [EP/S021590/1]. 
The author would also like to thank her host institution, Imperial College London, and the Max-Planck-Institut für Mathematik (MPIM) for their hospitality.

\section{Preliminaries} 
\label{section:preliminaries} 

We begin by outlining some of the basic ideas that we will draw on in this article. 
In particular, we review the different geometries of 3-manifolds and how these manifest for knot complements before and after Dehn filling. 
Throughout, we use the symbol $\cong$ to denote an orientation-preserving homeomorphism between 3-manifolds and $=$ to denote an ambient isotopy between knots or links. The notation $Q \cup U = Q' \cup U'$ corresponds to an isotopy of links which sends $Q$ to $Q'$ and $U$ to $U'$. Our 3-manifolds will always be connected and orientable and may have empty or toroidal boundary.

\subsection{Satellite knots} 

Recall that a \emph{knot} $K$ in a 3-manifold $M$ is the image of an embedding $\mathbb{S}^1 \hookrightarrow M$ considered up to ambient isotopy (whilst a \emph{link} $L$ is allowed to have multiple components). We denote mirror pairs of knots by $\pm K$. Drilling out a tubular neighbourhood $\nu(K)\cong\mathbb{S}^1\times\mathbb{D}^2$ of $K$ from $M$ produces the knot complement $M_K:=M \setminus\nu(K)$ (and similarly for links). In this discussion, we usually take our ambient manifold $M$ to be either $\mathbb{S}^3$ or $\mathbb{S}^1\times\mathbb{D}^2$, occasionally generalising to the complement $\mathbb{S}^3_{U^m}$ of an unlink with $m\geq0$ components -- the complement of the empty unlink is $\mathbb{S}^3$ itself. When $K$ is nullhomologous, we denote its \emph{meridian} by $\mu$ and (Seifert) \emph{longitude} by $\lambda$. This will be our canonical choice of generators for $H_1(\partial \nu(K))$; for links, we consider each component in isolation. We say that a simple closed curve $\gamma = p\mu+q\lambda$ on $\partial \nu(K) \cong \mathbb{T}^2$ (considered up to isotopy) has \emph{slope} $p/q \in \mathbb{Q} \cup \{\infty\}$, where $p,q\in\mathbb{Z}$ are a pair of coprime integers and $1/0:=\infty$. We may also refer to $\gamma$ itself as a slope. 

Knots in $\mathbb{S}^3$ beyond the unknot $U=U^1$ can be broadly classified into three main groups. A \emph{torus knot} $T_{a,b}$ is one which can be drawn on the surface of an unknotted torus, in the form of a simple closed curve winding $a$ times around the meridian and $b$ times around the longitude for some pair of coprime integers $a,b\in\mathbb{Z}$ with $|a|>1$ and $|b|>1$; relaxing the coprimeness condition extends this definition to also include \emph{torus links}. A \emph{hyperbolic knot} $K$ by definition is precisely one whose complement $\mathbb{S}^3_K$ admits a complete finite-volume hyperbolic metric; similarly, we can also define \emph{hyperbolic links}. Together, torus knots and hyperbolic knots form a class of knots called \emph{simple knots}. All other knots are called \emph{satellite knots} and can be constructed iteratively via the following process. 

\begin{definition}
    The \emph{satellite knot} $K=P(J)$ with \emph{pattern} $P = Q \cup U$ (for a knot $Q \subset \mathbb{S}^3_U\cong\mathbb{S}^1\times\mathbb{D}^2$ which is neither contained in any $\mathbb{B}^3\subset\mathbb{S}^3_U$ nor isotopic to the core of $\mathbb{S}^3_U$) and non-trivial \emph{companion knot} $J\subset \mathbb{S}^3$ is the image of $Q$ in the oriented homeomorphism $f: \mathbb{S}^3_U \to \nu(J)$ which sends the preferred longitude $\mu_U$ of the unknotted solid torus $\mathbb{S}^3_U \cong \mathbb{S}^1 \times \mathbb{D}^2$ to the longitude $\lambda_J$ of $\nu(J)$. 
\end{definition} 

Our work investigates both hyperbolic knots and satellite knots which can be expressed in the form $K=P(J)$ for some pattern $P = Q \cup U$ whose complement $\mathbb{S}^3_P$ is hyperbolic. 

As a special case, we focus on the \emph{$n$-clasped $t$-twisted Whitehead doubles}, $K=W^n_t(J)$, which can be constructed from any companion knot $J$ by applying the \emph{$n$-clasped $t$-twisted Whitehead link} pattern, $W^n_t = V^n_t \cup U$, for some integers $n\neq 0$ and $t\in\mathbb{Z}$ (as depicted in Figure \ref{figure:$W^n_t$}). One key property of this pattern is that it has linking number zero. Note that the component $V^n_t \subset \mathbb{S}^3_U$ is in fact isotopic to the \emph{$(n,t)$-double twist knot}, $T^n_t \subset \mathbb{S}^3$. Although $n$ and $t$ are interchangeable for $T^n_t \subset \mathbb{S}^3$, they are uniquely determined for $V^n_t \subset \mathbb{S}^3_U$. When $t=0$ (which we take unless otherwise stated), $T^n_0$ is unknotted; moreover, the unknotted components of $W^n = V^n \cup U$ can then be exchanged by an isotopy, which will be an important step in our construction of the non-characterising slopes in Theorem \ref{theorem:non-characterising}. 

Different values of $n$ give non-homeomorphic link complements, which can be distinguished for each $|n|$ by their hyperbolic volumes ($t$-twisting has no effect). When $n=\pm1$ (for which we often use the shorthand $\pm$), the complement of $W^\pm_t$ has the joint smallest volume amongst all 2-cusped orientable hyperbolic 3-manifolds \cite{agol}: this is essential for our proof of Theorem \ref{theorem:refined}. Furthermore, the component $V^\pm_t\subset\mathbb{S}^3_U$ is isotopic to a \emph{$t$-twist knot}, $T^\pm_t \subset \mathbb{S}^3$. The values of $t$ in the statement of the theorem come from those $T^\pm_t$, shown in Figure \ref{figure:twist}, for which all non-integral slopes are known to be characterising. 

There is another important family of satellite knots which will arise throughout our discussion. The $(r,s)$-\emph{cable} $K=C_{r,s}(J)$ of a knot $J$ is the satellite knot with companion knot $J$ and pattern \mbox{$C_{r,s} = T_{r,s} \cup U$}, where the torus knot $T_{r,s}\subset\mathbb{S}^3_U$ can be isotoped to lie on $\partial \mathbb{S}^3_U \cong\mathbb{T}^2$ in the standard way. Note that our convention is to take the winding number of the torus knot $T_{r,s}$ inside $\mathbb{S}^3_U$ to be $|s|$ rather than $|r|$. We include patterns with $|r|=1$ (despite $T_{1,s}$ being unknotted), but we always require $|s|>1$ (otherwise the pattern is trivial). 

Observe that there are often multiple ways to express a satellite knot in the form $P(J)$: for instance, a composite knot $J_1 \# J_2$ can be written either as a satellite $P_2(J_1)$ of $J_1$ by a pattern $P_2$ with $P_2(U)=J_2$, or as a satellite $P_1(J_2)$ of $J_2$ by a pattern $P_1$ with $P_1(U)=J_1$. Meanwhile, the satellite knots $C_{r,s}(W(J))$ can alternatively be thought of as $(C_{r,s}(W))(J)$.

\subsection{Geometry of 3-manifolds} 

In three dimensions, topology and geometry are closely entwined. There are two important classes of 3-manifolds which form the building blocks for this relationship. 

Firstly, we define \emph{Seifert fibred spaces}, which admit an $\mathbb{S}^1$-fibration over an orbifold base. 

\begin{definition} 
    A \emph{Seifert fibred space} $\widetilde{\Sigma}(\alpha_1/\beta_1, \ldots, \alpha_n/\beta_n)$ is an $\mathbb{S}^1$-bundle over an orbifold $\Sigma(\beta_1, \ldots, \beta_n)$ for which each $\mathbb{S}^1$-fibre admits a tubular neighbourhood that is homeomorphic to the $\mathbb{S}^1$-fibred mapping torus of a $2\pi t$-twist automorphism of $\mathbb{D}^2$ for some $t\in\mathbb{Q}$. The central $\mathbb{S}^1$-fibre is called \emph{regular} when $t \in \mathbb{Z}$ and \emph{exceptional} of order $|\beta_i|>1$ when $t=\alpha_i/\beta_i \not\in\mathbb{Z}$ for each pair of coprime integers $\alpha_i, \beta_i \in \mathbb{Z}$. 
\end{definition} 

For example, the complement $\mathbb{S}^3_{T_{a,b}}$ of the torus knot $T_{a,b}$ is a Seifert fibred space over the disc orbifold $\mathbb{D}^2(a,b)$, which has two exceptional fibres of orders $|a|$ and $|b|$; the Seifert invariants $a'$ and $b'$ satisfy the relationship $|ab'+ba'|=1$. Similarly, the complement $\mathbb{S}^3_{C_{r,s}}$ of the $(r,s)$-cable pattern $C_{r,s}$ is a Seifert fibred space over the annulus orbifold $\mathbb{A}^2(s)$, which has one exceptional fibre of order $|s|$ and Seifert invariant $r$ representing the behaviour of the fibre in the other direction -- this manifold is called a \emph{cable space}. As a final key example, a Seifert fibred space over a planar surface $\Sigma = \mathbb{S}^2 \setminus \sqcup_{i=1}^{m\geq3} \mathbb{D}^2$ with no exceptional fibres is called a \emph{composing space}. 

Secondly, we define \emph{hyperbolic 3-manifolds}, which we will later revisit in more detail. 

\begin{definition}
    A \emph{hyperbolic 3-manifold} $M \cong \mathbb{H}^3 / \Gamma$ is the quotient of hyperbolic 3-space $\mathbb{H}^3$ by a freely-acting discrete subgroup $\Gamma$ of the orientation-preserving isometry group $\mathrm{Isom}^+(\mathbb{H}^3)\cong \mathrm{PSL}(2,\mathbb{C})$. 
\end{definition} 

For instance, the figure eight knot complement can be built from gluing two ideal tetrahedra in $\mathbb{H}^3$, whilst the Whitehead link complement is constructed from a single ideal octahedron in $\mathbb{H}^3$ \cite{thurston}. 

\begin{theorem}[Hyperbolisation theorem {\cite[Theorem~2.3]{thurston-hyperbolisation}}]
\label{theorem:hyperbolisation}
    Let $M$ be a compact orientable 3-manifold with toroidal boundary. Then $M$ admits a hyperbolic structure if and only if it is irreducible and atoroidal with infinite $\pi_1(M)$. 
\end{theorem}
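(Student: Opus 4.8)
The plan is to treat this as Thurston's hyperbolisation theorem for Haken manifolds, where only the ``if'' direction has real content. For the ``only if'' direction I would argue directly: if $M$ carries a complete finite-volume hyperbolic metric, then $M \cong \mathbb{H}^3/\Gamma$ with $\Gamma \cong \pi_1(M)$ a lattice in $\mathrm{PSL}(2,\mathbb{C})$, so $\pi_1(M)$ is infinite. Since $\mathbb{H}^3$ is contractible, $M$ is aspherical, hence $\pi_2(M)=0$ and the Sphere Theorem forces $M$ to be irreducible. Finally, a $\pi_1$-injective torus in $M$ would give a $\mathbb{Z}^2 \leq \Gamma$; any such subgroup of a lattice in $\mathrm{PSL}(2,\mathbb{C})$ consists of parabolics fixing a common point at infinity and is therefore peripheral, so the torus is boundary-parallel and $M$ is atoroidal.

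For the ``if'' direction, assume $M$ is irreducible and atoroidal with $\pi_1(M)$ infinite (and exclude the small exceptional Seifert pieces such as the solid torus and $T^2 \times I$, which the usual conventions for ``atoroidal'' rule out). The first step is a topological reduction: since $M$ has nonempty toroidal boundary and is irreducible but is not a solid torus, $\partial M$ is incompressible (compressing a boundary torus and capping with a ball would display $M$ as a solid torus), so $M$ is Haken. I would then induct on the length of a hierarchy $M = M_0 \supset M_1 \supset \cdots \supset M_k$, cutting at each stage along a properly embedded two-sided incompressible surface and reaching a disjoint union of balls (or $I$-bundles) at the bottom, which are hyperbolic. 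The inductive step is the heart of the matter: given that $N$ is hyperbolic and $N'$ is obtained from $N$ by gluing two boundary subsurfaces, one must produce a hyperbolic structure on $N'$ (after checking $N'$ stays irreducible and atoroidal, which is where the hypotheses on $M$ are spent). Following Thurston, this is set up as a fixed-point problem on a quasiconformal deformation space: the skinning map composed with the gluing homeomorphism induces a self-map of a Teichm\"uller-type space whose fixed point is the sought structure on $N'$.

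The main obstacle is exactly the analytic core of Thurston's argument, which splits into two parts. First, the hierarchy can fail to terminate usefully precisely when $M$ fibres over $S^1$; that case is handled separately by hyperbolising the mapping torus of a pseudo-Anosov homeomorphism, and this rests on the Double Limit Theorem, which controls sequences of quasi-Fuchsian structures with diverging end invariants via Thurston's compactification of Teichm\"uller space by projective measured laminations together with continuity of length functions. Second, the gluing step needs the Bounded Image Theorem (equivalently a Maskit-style combination theorem), guaranteeing that the skinning map has bounded image so the self-map above actually has a fixed point. Both inputs rely on Ahlfors--Bers deformation theory and delicate geometric-limit arguments; by comparison, the reduction to a hierarchy, the asphericity argument, and the peripheral-torus argument are routine. (Uniqueness of the hyperbolic structure, not needed here, would follow from Mostow--Prasad rigidity.) In practice I would cite one of the complete accounts---Thurston, Otal, Kapovich, or McMullen---for the analytic theorems and present only this strategic outline.
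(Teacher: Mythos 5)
The paper offers no proof of this statement: it is quoted as Thurston's hyperbolisation theorem and attributed directly to the literature, which is ultimately what your outline also does by deferring the Double Limit Theorem, the Bounded Image Theorem and the skinning-map fixed-point argument to the standard references. Your sketch of both directions (asphericity and the Sphere Theorem for irreducibility, peripherality of $\mathbb{Z}^2$ subgroups for atoroidality, and the Haken hierarchy plus skinning map for the converse) is an accurate summary of that standard proof, so the two approaches agree.
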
  

This hyperbolic structure has finite volume provided $M$ is neither $\mathbb{S}^1 \times \mathbb{D}^2$ nor $\mathbb{T}^2 \times \mathbb{I}$. By Mostow rigidity \cite{mostow}, the geometry of a finite-volume hyperbolic 3-manifold is an invariant of its homeomorphism type, and thus gives us a tool for identification. Note that a finite-volume hyperbolic 3-manifold cannot simultaneously be a Seifert fibred space. However, a compact orientable 3-manifold $M$ can always be built from a combination of these types of pieces by first invoking the prime decomposition theorem \cite{kneser, milnor} and then applying the following result to each irreducible constituent.

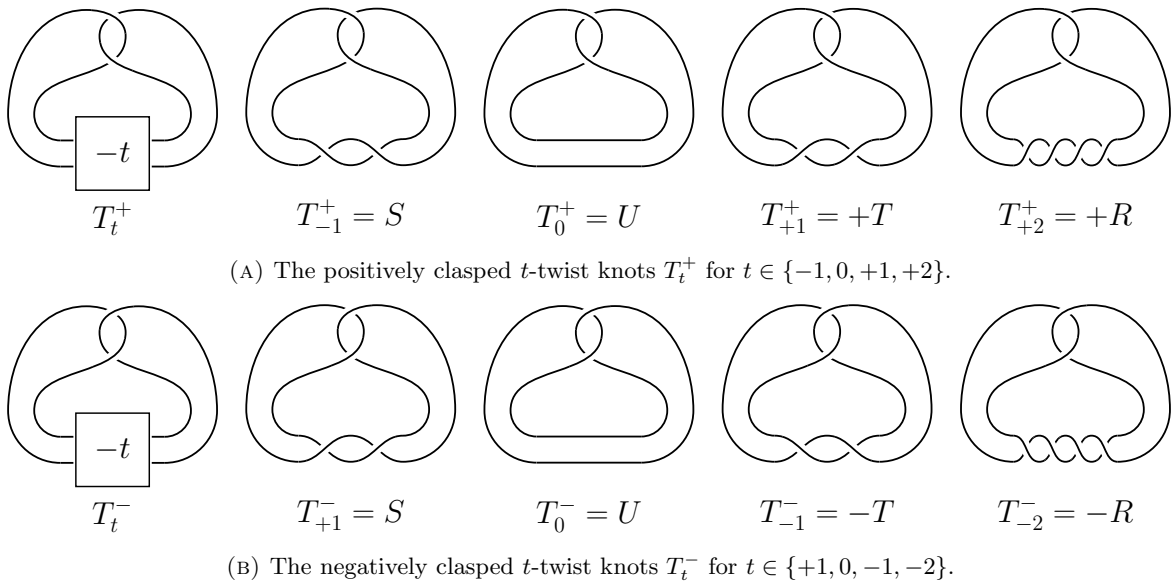
\begin{figure}[htbp!]
    \centering
    \begin{subfigure}{0.9\textwidth}
        \resizebox{\textwidth}{!}{\centering
\begin{tikzpicture} [squarednode/.style={rectangle, draw=black, fill=white, thick, minimum size=40pt}]
    \begin{knot} 
        \strand[white, double=black, thick, double distance=1pt] (-1,-1) 
        to [out=left, in=down] (-2,0)
        to [out=up, in=left] (-0.5,2)
        to [out=right, in=up] (0.25,1.5)
        to [out=down, in=up] (-1.5,0)
        to [out=down, in=left] (-1,-0.5); 
        \strand[white, double=black, thick, double distance=1pt] (-1,-0.5) to [out=right, in=left] (1,-0.5); 
        \strand[white, double=black, thick, double distance=1pt] (1,-0.5) 
        to [out=right, in=down] (1.5,0)
        to [out=up, in=down] (-0.25,1.5)
        to [out=up, in=left] (0.5,2)
        to [out=right, in=up] (2,0) 
        to [out=down, in=right] (1,-1); 
        \strand[white, double=black, thick, double distance=1pt] (1,-1) to [out=left, in=right] (-1,-1);
        \flipcrossings{2} 
        \node[] at (0,-2) {\Large $T^+_{t}$};
    \end{knot} 
    \node[squarednode] (A) at (0,-0.75) {\Large $-t$}; 
\end{tikzpicture} 
\quad 
\begin{tikzpicture} []
    \begin{knot} [] 
    \flipcrossings{2,4} 
        \strand[white, double=black, thick, double distance=1pt] (-1,-1) 
        to [out=left, in=down] (-2,0)
        to [out=up, in=left] (-0.5,2)
        to [out=right, in=up] (0.25,1.5)
        to [out=down, in=up] (-1.5,0)
        to [out=down, in=left] (-1,-0.5); 
        \strand[white, double=black, thick, double distance=1pt] (-1,-0.5) 
        to [out=right, in=left] (0,-1)
        to [out=right, in=left] (1,-0.5); 
        \strand[white, double=black, thick, double distance=1pt] (1,-0.5) 
        to [out=right, in=down] (1.5,0)
        to [out=up, in=down] (-0.25,1.5)
        to [out=up, in=left] (0.5,2)
        to [out=right, in=up] (2,0) 
        to [out=down, in=right] (1,-1); 
        \strand[white, double=black, thick, double distance=1pt] (1,-1) 
        to [out=left, in=right] (0,-0.5)
        to [out=left, in=right] (-1,-1); 
        \node[] at (0,-2) {\Large $T^+_{-1}=S$}; 
    \end{knot} 
\end{tikzpicture} 
\quad 
\begin{tikzpicture} []
    \begin{knot} [] 
    \flipcrossings{2} 
        \strand[white, double=black, thick, double distance=1pt] (-1,-1) 
        to [out=left, in=down] (-2,0)
        to [out=up, in=left] (-0.5,2)
        to [out=right, in=up] (0.25,1.5)
        to [out=down, in=up] (-1.5,0)
        to [out=down, in=left] (-1,-0.5); 
        \strand[white, double=black, thick, double distance=1pt] (-1,-0.5) 
        to [out=right, in=left] (1,-0.5); 
        \strand[white, double=black, thick, double distance=1pt] (1,-0.5) 
        to [out=right, in=down] (1.5,0)
        to [out=up, in=down] (-0.25,1.5)
        to [out=up, in=left] (0.5,2)
        to [out=right, in=up] (2,0) 
        to [out=down, in=right] (1,-1); 
        \strand[white, double=black, thick, double distance=1pt] (1,-1) 
        to [out=left, in=right] (-1,-1); 
        \node[] at (0,-2) {\Large $T^+_0=U$}; 
    \end{knot} 
\end{tikzpicture} 
\quad 
\begin{tikzpicture} [] 
    \begin{knot} [] 
    \flipcrossings{2,3} 
        \strand[white, double=black, thick, double distance=1pt] (-1,-1) 
        to [out=left, in=down] (-2,0)
        to [out=up, in=left] (-0.5,2)
        to [out=right, in=up] (0.25,1.5)
        to [out=down, in=up] (-1.5,0)
        to [out=down, in=left] (-1,-0.5); 
        \strand[white, double=black, thick, double distance=1pt] (-1,-0.5) 
        to [out=right, in=left] (0,-1)
        to [out=right, in=left] (1,-0.5); 
        \strand[white, double=black, thick, double distance=1pt] (1,-0.5) 
        to [out=right, in=down] (1.5,0)
        to [out=up, in=down] (-0.25,1.5)
        to [out=up, in=left] (0.5,2)
        to [out=right, in=up] (2,0) 
        to [out=down, in=right] (1,-1); 
        \strand[white, double=black, thick, double distance=1pt] (1,-1) 
        to [out=left, in=right] (0,-0.5)
        to [out=left, in=right] (-1,-1); 
        \node[] at (0,-2) {\Large $T^+_{+1}=+T$};
    \end{knot} 
\end{tikzpicture}
\quad 
\begin{tikzpicture} [] 
    \begin{knot} [] 
    \flipcrossings{2,3,5} 
        \strand[white, double=black, thick, double distance=1pt] (-1,-1) 
        to [out=left, in=down] (-2,0)
        to [out=up, in=left] (-0.5,2)
        to [out=right, in=up] (0.25,1.5)
        to [out=down, in=up] (-1.5,0)
        to [out=down, in=left] (-1,-0.5); 
        \strand[white, double=black, thick, double distance=1pt] (-1,-0.5) 
        to [out=right, in=left] (-0.5,-1)
        to [out=right, in=left] (0,-0.5)
        to [out=right, in=left] (0.5,-1)
        to [out=right, in=left] (1,-0.5); 
        \strand[white, double=black, thick, double distance=1pt] (1,-0.5) 
        to [out=right, in=down] (1.5,0)
        to [out=up, in=down] (-0.25,1.5)
        to [out=up, in=left] (0.5,2)
        to [out=right, in=up] (2,0) 
        to [out=down, in=right] (1,-1); 
        \strand[white, double=black, thick, double distance=1pt] (1,-1) 
        to [out=left, in=right] (0.5,-0.5)
        to [out=left, in=right] (0,-1)
        to [out=left, in=right] (-0.5,-0.5)
        to [out=left, in=right] (-1,-1); 
        \node[] at (0,-2) {\Large $T^+_{+2}=+R$};
    \end{knot} 
\end{tikzpicture} }
        \caption{The positively clasped $t$-twist knots $T^+_t$ for $t\in\{-1,0,+1,+2\}$. } 
        \label{subfigure:twist+} 
    \end{subfigure} 
    \begin{subfigure}{0.9\textwidth} 
        \resizebox{\textwidth}{!}{\centering
\begin{tikzpicture} [squarednode/.style={rectangle, draw=black, fill=white, thick, minimum size=40pt}]
    \begin{knot} 
        \strand[white, double=black, thick, double distance=1pt] (-1,-1) 
        to [out=left, in=down] (-2,0)
        to [out=up, in=left] (-0.5,2)
        to [out=right, in=up] (0.25,1.5)
        to [out=down, in=up] (-1.5,0)
        to [out=down, in=left] (-1,-0.5); 
        \strand[white, double=black, thick, double distance=1pt] (-1,-0.5) to [out=right, in=left] (1,-0.5); 
        \strand[white, double=black, thick, double distance=1pt] (1,-0.5) 
        to [out=right, in=down] (1.5,0)
        to [out=up, in=down] (-0.25,1.5)
        to [out=up, in=left] (0.5,2)
        to [out=right, in=up] (2,0) 
        to [out=down, in=right] (1,-1); 
        \strand[white, double=black, thick, double distance=1pt] (1,-1) to [out=left, in=right] (-1,-1); 
        \flipcrossings{1} 
        \node[] at (0,-2) {\Large $T^-_{t}$};
    \end{knot} 
    \node[squarednode] (A) at (0,-0.75) {\Large $-t$}; 
\end{tikzpicture} 
\quad 
\begin{tikzpicture} []
    \begin{knot} [] 
    \flipcrossings{1,3} 
        \strand[white, double=black, thick, double distance=1pt] (-1,-1) 
        to [out=left, in=down] (-2,0)
        to [out=up, in=left] (-0.5,2)
        to [out=right, in=up] (0.25,1.5)
        to [out=down, in=up] (-1.5,0)
        to [out=down, in=left] (-1,-0.5); 
        \strand[white, double=black, thick, double distance=1pt] (-1,-0.5) 
        to [out=right, in=left] (0,-1)
        to [out=right, in=left] (1,-0.5); 
        \strand[white, double=black, thick, double distance=1pt] (1,-0.5) 
        to [out=right, in=down] (1.5,0)
        to [out=up, in=down] (-0.25,1.5)
        to [out=up, in=left] (0.5,2)
        to [out=right, in=up] (2,0) 
        to [out=down, in=right] (1,-1); 
        \strand[white, double=black, thick, double distance=1pt] (1,-1) 
        to [out=left, in=right] (0,-0.5)
        to [out=left, in=right] (-1,-1); 
        \node[] at (0,-2) {\Large $T^-_{+1}=S$}; 
    \end{knot} 
\end{tikzpicture} 
\quad 
\begin{tikzpicture} []
    \begin{knot} [] 
    \flipcrossings{1} 
        \strand[white, double=black, thick, double distance=1pt] (-1,-1) 
        to [out=left, in=down] (-2,0)
        to [out=up, in=left] (-0.5,2)
        to [out=right, in=up] (0.25,1.5)
        to [out=down, in=up] (-1.5,0)
        to [out=down, in=left] (-1,-0.5); 
        \strand[white, double=black, thick, double distance=1pt] (-1,-0.5) 
        to [out=right, in=left] (1,-0.5); 
        \strand[white, double=black, thick, double distance=1pt] (1,-0.5) 
        to [out=right, in=down] (1.5,0)
        to [out=up, in=down] (-0.25,1.5)
        to [out=up, in=left] (0.5,2)
        to [out=right, in=up] (2,0) 
        to [out=down, in=right] (1,-1); 
        \strand[white, double=black, thick, double distance=1pt] (1,-1) 
        to [out=left, in=right] (-1,-1); 
        \node[] at (0,-2) {\Large $T^-_0=U$}; 
    \end{knot} 
\end{tikzpicture} 
\quad 
\begin{tikzpicture} [] 
    \begin{knot} [] 
    \flipcrossings{1,4} 
        \strand[white, double=black, thick, double distance=1pt] (-1,-1) 
        to [out=left, in=down] (-2,0)
        to [out=up, in=left] (-0.5,2)
        to [out=right, in=up] (0.25,1.5)
        to [out=down, in=up] (-1.5,0)
        to [out=down, in=left] (-1,-0.5); 
        \strand[white, double=black, thick, double distance=1pt] (-1,-0.5) 
        to [out=right, in=left] (0,-1)
        to [out=right, in=left] (1,-0.5); 
        \strand[white, double=black, thick, double distance=1pt] (1,-0.5) 
        to [out=right, in=down] (1.5,0)
        to [out=up, in=down] (-0.25,1.5)
        to [out=up, in=left] (0.5,2)
        to [out=right, in=up] (2,0) 
        to [out=down, in=right] (1,-1); 
        \strand[white, double=black, thick, double distance=1pt] (1,-1) 
        to [out=left, in=right] (0,-0.5)
        to [out=left, in=right] (-1,-1); 
        \node[] at (0,-2) {\Large $T^-_{-1}=-T$};
    \end{knot} 
\end{tikzpicture}
\quad 
\begin{tikzpicture} [] 
    \begin{knot} [] 
    \flipcrossings{1,4,6} 
        \strand[white, double=black, thick, double distance=1pt] (-1,-1) 
        to [out=left, in=down] (-2,0)
        to [out=up, in=left] (-0.5,2)
        to [out=right, in=up] (0.25,1.5)
        to [out=down, in=up] (-1.5,0)
        to [out=down, in=left] (-1,-0.5); 
        \strand[white, double=black, thick, double distance=1pt] (-1,-0.5) 
        to [out=right, in=left] (-0.5,-1)
        to [out=right, in=left] (0,-0.5)
        to [out=right, in=left] (0.5,-1)
        to [out=right, in=left] (1,-0.5); 
        \strand[white, double=black, thick, double distance=1pt] (1,-0.5) 
        to [out=right, in=down] (1.5,0)
        to [out=up, in=down] (-0.25,1.5)
        to [out=up, in=left] (0.5,2)
        to [out=right, in=up] (2,0) 
        to [out=down, in=right] (1,-1); 
        \strand[white, double=black, thick, double distance=1pt] (1,-1) 
        to [out=left, in=right] (0.5,-0.5)
        to [out=left, in=right] (0,-1)
        to [out=left, in=right] (-0.5,-0.5)
        to [out=left, in=right] (-1,-1); 
        \node[] at (0,-2) {\Large $T^-_{-2}=-R$};
    \end{knot} 
\end{tikzpicture} }
        \caption{The negatively clasped $t$-twist knots $T^-_t$ for $t\in\{+1,0,-1,-2\}$. } 
        \label{subfigure:twist-} 
    \end{subfigure} 
    \caption{The twist knots for which all non-integral slopes are characterising. } 
    \label{figure:twist} 
\end{figure} 

\begin{theorem}[JSJ decomposition theorem \cite{js,j}]
    Let $M$ be a compact orientable irreducible 3-manifold with empty or toroidal boundary. Then there exists a minimal finite collection of essential tori in $M$ (unique up to isotopy) called the \emph{JSJ tori} such that each of the \emph{JSJ pieces} formed by cutting $M$ along these tori is either atoroidal or a Seifert fibred space. 
\end{theorem}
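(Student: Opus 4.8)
The plan is to follow the classical strategy of Jaco--Shalen and Johannson \cite{js,j}, built from two ingredients: Kneser--Haken finiteness for incompressible surfaces, and the structure theory of essential tori in Seifert fibred spaces. For \textbf{finiteness and existence}, I would first fix a triangulation of $M$ and isotope essential tori into normal form; the Kneser--Haken finiteness theorem then bounds the number of disjoint, pairwise non-isotopic essential tori in $M$ in terms of the number of tetrahedra, so a maximal collection $\mathcal{S}$ of such tori exists. Cutting $M$ along $\mathcal{S}$ yields finitely many pieces, and an innermost-disk and minimal-intersection argument shows that any essential torus in one of these pieces either contradicts the maximality of $\mathcal{S}$ or is boundary-parallel; hence every piece of $M$ cut along $\mathcal{S}$ is atoroidal. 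I would then discard tori from $\mathcal{S}$ greedily: whenever a torus $T \in \mathcal{S}$ is such that the union of the one or two pieces adjacent to it is Seifert fibred, remove $T$. Finiteness guarantees that this terminates at a collection $\mathcal{T} \subseteq \mathcal{S}$ for which every complementary piece is atoroidal or Seifert fibred and no proper sub-collection of $\mathcal{T}$ retains this property. The fact used here is that two Seifert fibred pieces glued along a torus carrying compatible fibrings form a Seifert fibred space, so no redundant torus survives.

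For \textbf{uniqueness up to isotopy}, which is where the real work lies, I would take two such minimal collections $\mathcal{T}$ and $\mathcal{T}'$, isotope $\mathcal{T}'$ to meet $\mathcal{T}$ transversely with the minimal number of intersection components, and argue that this intersection is empty. A component torus $T'$ of $\mathcal{T}'$ contained in a piece $N$ of $M$ cut along $\mathcal{T}$ is essential in $N$; if $N$ is atoroidal then $T'$ is boundary-parallel and can be pushed off $\partial N \subseteq \mathcal{T}$, while if $N$ is Seifert fibred then the classification of essential tori in Seifert fibred spaces shows $T'$ is isotopic to a vertical torus and so can again be made disjoint from $\mathcal{T}$. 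Once $\mathcal{T}$ and $\mathcal{T}'$ are disjoint, minimality of each collection forces them to coincide up to isotopy: a torus of $\mathcal{T}'$ lying in a Seifert fibred piece of $M$ cut along $\mathcal{T}$ and parallel into its boundary could be deleted from $\mathcal{T}'$ without destroying the decomposition, contradicting minimality, and symmetrically; hence every torus of one collection is isotopic to one of the other. Uniqueness then retroactively certifies that the collection $\mathcal{T}$ produced above is canonical.

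I expect the \textbf{main obstacle} to be the Seifert fibred analysis underpinning the uniqueness step. One must handle the small exceptional fibred spaces --- most notably the twisted $I$-bundle over the Klein bottle, which fibres in more than one way, and the small pieces such as cable spaces and composing spaces that arise for knot and link complements --- where ``vertical'' has to be read relative to an appropriate choice of fibring, and one must verify that in a minimal decomposition the gluing homeomorphism between two adjacent Seifert pieces always identifies their fibre slopes incompatibly, which is precisely the obstruction to further amalgamation. Everything else --- normal surface theory, innermost-disk exchanges, and the bookkeeping of isotopies --- is routine, and since the paper only applies this theorem to irreducible $3$-manifolds with toroidal boundary, the statement may be quoted directly from \cite{js,j}.
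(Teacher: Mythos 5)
The paper does not prove this statement: it is quoted as a classical theorem with a citation to \cite{js,j}, which is exactly what your final sentence proposes, so there is no in-paper argument to compare against. Your outline is the standard modern proof of the torus-decomposition version of JSJ (Kneser--Haken finiteness gives a maximal disjoint collection of pairwise non-parallel essential tori; the parallelism-region argument shows the complementary pieces are atoroidal; amalgamating compatibly fibred neighbours yields a minimal collection; uniqueness comes from the classification of essential tori in Seifert fibred spaces), and it is sound as a sketch. Two places are thinner than they read, though you partly flag them. First, the greedy deletion is not obviously order-independent, and ``minimal'' must be pinned down (fewest tori versus no removable torus); both points are settled only by the uniqueness argument, so the logical dependence runs the other way from how the existence paragraph presents it. Second, in the uniqueness step the delicate case is not a torus of $\mathcal{T}'$ that is boundary-parallel in a piece of $M$ cut along $\mathcal{T}$ (that one is easily pushed off), but a vertical, non-boundary-parallel essential torus in a Seifert fibred piece: to delete it from $\mathcal{T}'$ one must show that the fibrings of the two adjacent $\mathcal{T}'$-pieces agree across it, which is precisely the fibre-slope compatibility analysis you identify, together with the exceptional fibrings of $\mathbb{T}^2\times\mathbb{I}$ and the twisted $\mathbb{I}$-bundle over the Klein bottle. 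Since the paper uses the theorem only as a black box, quoting \cite{js,j} as you suggest is the appropriate course, and your sketch correctly locates where the real work in those references lies.
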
 

When $M$ is the complement $\mathbb{S}^3_K$ of a knot $K\subset\mathbb{S}^3$, we can determine exactly what the JSJ pieces look like. Crucially, the JSJ tori correspond to those appearing naturally in the satellite construction. The \emph{outermost} JSJ piece of $\mathbb{S}^3_K$ is the unique one containing $\partial \mathbb{S}^3_K$. If this is hyperbolic, then $K$ is called a \emph{knot of hyperbolic type}. 

\begin{theorem}[\cite{budney}]
\label{theorem:jsj}
    Let $K\subset\mathbb{S}^3$ be a knot. Then the JSJ pieces of $\mathbb{S}^3_K$ can be classified into the following types. 
    \begin{enumerate}
        \item A Seifert fibred space over the disc orbifold $\mathbb{D}^2(a,b)$, i.e. a \emph{torus knot complement}, $\mathbb{S}^3_{T_{a,b}}$.
        \begin{itemize} 
            \item If this is the outermost JSJ piece, then $K$ is an \emph{$(a,b)$-torus knot}.
            \item Regular fibres have slope $ab/1$ on $\partial \mathbb{S}^3_K$. 
        \end{itemize} 
        \item A Seifert fibred space over the annulus orbifold $\mathbb{A}^2(s)$, i.e. a \emph{cable space}, $\mathbb{S}^3_{C_{r,s}}$. 
        \begin{itemize} 
            \item If this is the outermost JSJ piece, then $K$ is an \emph{$(r,s)$-cable knot}.
            \item Regular fibres have slope $rs/1$ on $\partial \mathbb{S}^3_K$ and slope $s/r$ on the other boundary torus. 
        \end{itemize} 
        \item A Seifert fibred space over the planar surface $\Sigma=\mathbb{S}^2\setminus\sqcup_{i=1}^{m\geq3}\mathbb{D}^2$, i.e. a \emph{composing space}. 
        \begin{itemize} 
            \item If this is the outermost JSJ piece, then $K$ is a \emph{composite knot}.
            \item Regular fibres have slope $1/0$ on $\partial \mathbb{S}^3_K$ and slope $0/1$ on the other boundary tori. 
        \end{itemize} 
        \item A hyperbolic 3-manifold that is homeomorphic to a \emph{hyperbolic link complement}, $\mathbb{S}^3_L$, such that removing one specific component $L_0$ of the link $L\subset\mathbb{S}^3$ leaves an unlink (possibly the empty link). 
        \begin{itemize} 
            \item If this is the outermost JSJ piece, then $K$ is a \emph{knot of hyperbolic type}. 
            \item Moreover, $K$ corresponds to this specific component $L_0 \subset L$.
        \end{itemize} 
    \end{enumerate} 
\end{theorem}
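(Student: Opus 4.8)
The plan is to apply the JSJ decomposition theorem to $\mathbb{S}^3_K$ and then identify each piece explicitly, organising the pieces along the dual graph, which we root at the piece $X_0$ containing $\partial\nu(K)$. First I would pin down the picture at a single JSJ torus $T$: it is an essential torus in $\mathbb{S}^3_K\subset\mathbb{S}^3$, and a classical argument (compress $T$ in $\mathbb{S}^3$ and use irreducibility) shows it bounds a solid torus on at least one side; the side of $T$ missing $\nu(K)$ cannot be that solid torus, since $T$ is incompressible in $\mathbb{S}^3_K$, so the side of $T$ containing $\partial\nu(K)$ is a solid torus whose core is a nontrivial knot. This forces the dual graph to be a tree. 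For a piece $X$, write $\partial_+X$ for the boundary torus facing the root (or $\partial\nu(K)$ itself if $X=X_0$) and $\partial_-^1X,\dots,\partial_-^mX$ for those facing the leaves; then each $\partial_-^jX$ bounds a solid torus containing $X$, while (if $X\neq X_0$) $\partial_+X$ bounds a solid torus $V_+$ disjoint from $X$ whose complement is a nontrivial knot complement containing $X$.

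Next, suppose a piece $X$ is atoroidal. By the Hyperbolisation Theorem (Theorem~\ref{theorem:hyperbolisation}) it is a finite-volume cusped hyperbolic $3$-manifold, and I would exhibit it as a link complement of the stated form by ``filling in the leaves'': to each $\partial_-^jX$ attach a solid torus $\hat W_j$, glued so that the descendant companion being truncated is replaced by an unknot --- concretely, so that $V_+\cup X\cup\hat W_1\cup\dots\cup\hat W_m$ (or, if $X=X_0$, so that $\nu(K)\cup X_0\cup\hat W_1\cup\dots\cup\hat W_m$) is again $\mathbb{S}^3$. This realises $X$ as $\mathbb{S}^3\setminus\nu(L)$, where $L$ is the link formed by the core $c_j$ of each $\hat W_j$ together with the core of $V_+$ (respectively $K$ itself, when $X=X_0$). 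Since the meridian discs of the $\hat W_j$ are disjoint and avoid $V_+$ (respectively $\nu(K)$), deleting this distinguished component of $L$ leaves the unlink $c_1\cup\dots\cup c_m$ --- which is empty precisely when $X$ is a ``leaf'' hyperbolic knot complement. This is case (4), and when $X=X_0$ the distinguished component is exactly $K$.

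Then, suppose a piece $X$ is Seifert fibred. By the first step it embeds in a solid torus, hence in $\mathbb{S}^3$, so the classification of Seifert fibred submanifolds of $\mathbb{S}^3$ forces its base orbifold to be planar with at most two exceptional fibres; and minimality of the JSJ decomposition forbids two adjacent pieces from both being Seifert fibred with fibrations agreeing on the shared torus. Together these leave exactly three cases: a disc with two exceptional fibres --- a torus knot complement $\mathbb{S}^3_{T_{a,b}}$, which has a single cusp and so cannot be $X_0$ since $K$ is a satellite; an annulus with one exceptional fibre --- a cable space $\mathbb{S}^3_{J_{r,s}}$, so that $K$ is by definition a cable knot if this piece contains $\partial\nu(K)$; and an $m$-holed sphere with $m\geq3$ and no exceptional fibres --- a composing space, so that $K$ is composite if this piece contains $\partial\nu(K)$. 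The fibre-slope claims are read off the explicit Seifert coordinates: the cabling slope $ab$ on $\partial\mathbb{S}^3_{T_{a,b}}$; the slopes $rs$ and $r/s$ on the two boundary tori of $\mathbb{S}^3_{J_{r,s}}$ under the winding-number convention $|s|$; and the meridian on every boundary torus of a composing space, whose fibration is a product. Finally, iterated cables, composite knots and Whitehead doubles show that all four types actually occur, so the list is exhaustive.

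I expect the Seifert-fibred case to be the main obstacle: narrowing the possibilities to exactly these three requires the classification of Seifert fibred submanifolds of $\mathbb{S}^3$ together with careful use of JSJ minimality, and matching the resulting Seifert invariants to the stated fibre slopes means keeping the paper's winding-number and framing conventions consistent throughout. The other delicate point is the hyperbolic case: one must check that the leaf-filling operation genuinely produces an unlink --- and not some more complicated link --- as the complement of the distinguished component.
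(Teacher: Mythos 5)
The paper offers no proof of this statement: it is quoted verbatim from Budney's survey \cite{budney}, so there is no in-paper argument to compare yours against. Your sketch does follow what is essentially Budney's route (companion solid tori, the rooted companionship tree, the classification of Seifert fibred submanifolds of $\mathbb{S}^3$, and re-embedding each atoroidal piece as a link complement), and the tree structure, the three Seifert fibred types, and the fibre-slope computations are sound modulo the classification you cite.

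The genuine gap is in case (4), and it is exactly the point you flag at the end. The justification that $c_1\cup\dots\cup c_m$ is an unlink --- ``the meridian discs of the $\hat W_j$ are disjoint and avoid $V_+$'' --- proves nothing: the meridian discs of disjoint tubular neighbourhoods of the two components of the Whitehead link are also disjoint, yet that link is no unlink. What you actually need is that each core $c_j$ bounds a disc in the complement of the others, equivalently that $V_+\cup_{\partial_+X}X$ (resp.\ $\nu(K)\cup X_0$) is homeomorphic to the exterior of the $m$-component unlink, i.e.\ a connected sum of $m$ solid tori. Since $X$ is irreducible with incompressible boundary, for $m\geq 2$ this says the Dehn filling of $X$ along $\partial_+X$ by $V_+$ is reducible --- an exceptional filling when $X$ is hyperbolic --- so it certainly cannot be automatic; it depends on the specific framings with which the $\hat W_j$ and $V_+$ arise from the satellite construction (this is the content of Budney's ``untwisted re-embedding'' argument). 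As written, your case (4) only yields a link each of whose non-distinguished components is individually unknotted, which is strictly weaker than the statement. A secondary soft spot: getting down to exactly three Seifert fibred types needs more than ``planar base with at most two exceptional fibres'' (one must also exclude, e.g., an annulus base with two exceptional fibres, or a base with three or more boundary circles and one exceptional fibre), so you are relying on the full strength of the embedding classification rather than the version you state.
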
 

In particular, each JSJ piece of a knot complement $\mathbb{S}^3_K$ is homeomorphic to a link complement $\mathbb{S}^3_L$; as decribed in \cite{budney}, this link $L$ is unique up to isotopy, given the additional data of the gluing maps used in the satellite construction of $K$. Therefore we can measure slopes on each toroidal boundary component of a JSJ piece $\mathbb{S}^3_L$ with respect to the basis given by the meridian and longitude of the corresponding component of $L$.

\subsection{Dehn surgery} 

For the purpose of introducing our notation, we briefly recall the definition of \emph{Dehn surgery}. If $N$ is a compact orientable 3-manifold, then \emph{Dehn filling} describes the process of attaching a solid torus $\mathbb{S}^1 \times \mathbb{D}^2$ to a toroidal boundary component $\mathbb{T}^2 \subset \partial N$ via a homeomorphism $$\phi : \partial (\mathbb{S}^1 \times \mathbb{D}^2) \to \mathbb{T}^2 \subset \partial N, \quad \phi(\{pt\} \times \partial \mathbb{D}^2) = \gamma,$$ where $\gamma \subset \mathbb{T}^2$ is some simple closed curve. We may write $N(\gamma)$ for the resulting manifold. In particular, when $K$ in a nullhomologous knot inside a compact orientable 3-manifold $M$, we can glue the solid torus $\nu(K) \cong \mathbb{S}^1 \times \mathbb{D}^2$ back into the knot complement $N=M_K$ by a homeomorphism $$\phi: \partial \nu(K) \to \mathbb{T}^2 \subset \partial M_K, \quad \phi(\{pt\} \times \partial \mathbb{D}^2) = p\mu + q\lambda,$$ where $\mu$ and $\lambda$ denote the meridian and longitude of $\mathbb{T}^2$ induced by $K$ and $p, q \in\mathbb{Z}$ are some pair of coprime integers. The resulting manifold is determined by the slope $p/q$ of $\gamma = p\mu+q\lambda$, so we denote it by $M_K(p/q)$. 

This process can be extended to links $L \subset M$ in the obvious way: we first choose a meridian and longitude for each component of $L$ by considering them in isolation as knots in $M$, and then we can write $$M_{L_0,\ldots,L_{m-1}}(p_0/q_0, \ldots, p_{m-1}/q_{m-1})$$ for Dehn surgery on an $m$-component link $L=L_0\cup \ldots\cup L_{m-1}$. For an unfilled $i^\text{th}$ component, we may write $\ast$ instead of $p_i/q_i$, or omit the coefficient altogether when the meaning is clear from the context. In this article, whenever $L = L_0 \cup U^{m-1}$, an unspecified filling component will always be taken to mean $L_0$. We will be particularly interested in performing Dehn surgery on $2$-component links $P = Q \cup U$; the same convention then allows us to write $$\mathbb{S}^3_P(p/q) = \mathbb{S}^3_{Q,U}(p/q, \ast)$$ without specifying the filling component. 

There are several results concerning Dehn surgery on knots in $\mathbb{S}^3$ and $\mathbb{S}^1\times\mathbb{D}^2$ which we will use in this article. An important quantity to consider when comparing Dehn fillings is the \emph{distance} $\Delta$ between a pair of slopes, which is precisely their minimal intersection number when drawn as simple closed curves on $\mathbb{T}^2$. In particular, note that $\Delta(p/q,1/0)=|q|$: this is the reason behind the conditions on the denominator that appear in the following results. 

First, we consider surgeries on a hyperbolic knot in $\mathbb{S}^3$. 

\begin{theorem}
\label{theorem:hyperbolic_knot}
    Let $K\subset\mathbb{S}^3$ be hyperbolic. If $|q|\geq2$, then $\mathbb{S}^3_K(p/q)$ is irreducible. If $|q|\geq3$, then $\mathbb{S}^3_K(p/q)$ is atoroidal. If $|q|\geq 9$, then $\mathbb{S}^3_K(p/q)$ is hyperbolic. 
\end{theorem}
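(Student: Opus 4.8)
The plan is to obtain each of the three conclusions from a known upper bound on how far a ``bad'' surgery slope can be from the meridian, converting such a bound into a condition on $|q|$ via the identity $\Delta(p/q,1/0)=|q|$ recalled above. Since $K$ is hyperbolic, $\mathbb{S}^3_K$ is a one-cusped finite-volume hyperbolic $3$-manifold, so the relevant surgery machinery applies, and $\mathbb{S}^3_K(1/0)=\mathbb{S}^3$ records in particular that the meridian $1/0$ is itself an exceptional (non-hyperbolic) slope.

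First I would treat irreducibility using the theorem of Gordon and Luecke that only integral surgeries on a knot in $\mathbb{S}^3$ can yield a reducible manifold \cite{gl-red}: a slope $p/q$ with $\mathbb{S}^3_K(p/q)$ reducible is integral, so $|q|=\Delta(p/q,1/0)\leq1$, and therefore $|q|\geq2$ forces irreducibility. (This step uses only that $K$ is a non-trivial knot.) Next, for atoroidality I would invoke the Gordon--Luecke analysis of surgeries creating essential tori \cite{gl-tor}: when $K$ is hyperbolic and $\mathbb{S}^3_K(p/q)$ is toroidal, the slope $p/q$ is integral or half-integral, so $|q|\leq2$, and hence $|q|\geq3$ forces atoroidality. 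Finally, for hyperbolicity I would use Thurston's hyperbolic Dehn surgery theorem (only finitely many slopes are exceptional) together with the Lackenby--Meyerhoff bound \cite{lm} that any two exceptional slopes on a one-cusped hyperbolic $3$-manifold have distance at most $8$; pairing an exceptional slope $p/q$ with the (exceptional) meridian gives $|q|=\Delta(p/q,1/0)\leq8$, so $|q|\geq9$ forces $\mathbb{S}^3_K(p/q)$ to be hyperbolic.

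There is no substantive obstacle here: the difficult work is already contained in the three cited inputs, and all that remains is to assemble them. The only thing genuinely requiring care is matching hypotheses correctly rather than the routine bookkeeping afterwards -- in particular the toroidal bound really does rely on $K$ being hyperbolic (it fails for cable knots, where arbitrarily large denominators can still yield toroidal surgeries), and in each case the passage to a statement purely about $|q|$ depends on using the meridian $1/0$ as the comparison slope.
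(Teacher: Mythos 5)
Your proposal is correct and is exactly the argument the paper intends: the one-line proof in the paper cites precisely the three inputs you use (Gordon--Luecke on reducible surgeries being integral, Gordon--Luecke on toroidal surgeries of hyperbolic knots having $|q|\leq2$, and Lackenby--Meyerhoff's distance bound of $8$ between exceptional fillings), each converted to a condition on $|q|$ via $\Delta(p/q,1/0)=|q|$ with the meridian as the comparison slope. Your added remarks on where hyperbolicity of $K$ is genuinely needed are accurate.
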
 

This follows from the maximal distances between exceptional fillings \cite{gl-red, gl-tor, lm}.

A similar result holds when the ambient manifold is the complement of an $(m-1)$-component unlink $U^{m-1}$ (where $m \geq 2$).

\begin{theorem}
\label{theorem:hyperbolic_pattern}
    Let $L_0 \subset\mathbb{S}^3_{U^{m-1}}$ be hyperbolic and write $L = L_0 \cup U^{m-1}$. If $|q|\geq2$, then $\mathbb{S}^3_L(p/q)$ is irreducible with incompressible boundary. If $|q|\geq3$, then $\mathbb{S}^3_L(p/q)$ is hyperbolic. 
\end{theorem} 

This follows from obstructing essential surfaces inside $\mathbb{S}^3_L(p/q)$; see \cite[Theorem~2.8]{lackenby}. 

Moving on to torus knots, we summarise the complete classification of surgeries.

\begin{theorem}[\cite{moser}]
\label{theorem:torus_knot}
    Let $K=T_{r,s}$ be the $(r,s)$-torus knot in $\mathbb{S}^3$. Then 
    $$\mathbb{S}^3_{T_{r,s}}(p/q) \cong
        \begin{cases}
            L(r,s) \# L(s,r) & p=rs, q=1, \\
            L(|p|,qs^2) & p=qrs\pm1, \\
            M & \text{otherwise}, \\
        \end{cases}
    $$
    where $M$ is a Seifert fibred space over $\mathbb{S}^2$ with three exceptional fibres of orders $|r|$, $|s|$ and $|p-qrs|$.
\end{theorem}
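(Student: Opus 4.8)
The final statement to prove is Theorem~\ref{theorem:torus_knot}, Moser's classification of surgeries on torus knots. The plan is to use the Seifert fibred structure on the torus knot exterior $\mathbb{S}^3_{T_{r,s}}$, which fibres over the disc orbifold $\mathbb{D}^2(a'/a, b'/b)$ with two exceptional fibres of orders $|r|$ and $|s|$, and to track what happens to this fibration under Dehn filling. The key observation, as recalled in the excerpt, is that the regular fibre has slope $rs$ on $\partial\nu(T_{r,s})$. Filling along a slope $\gamma = p\mu + q\lambda$ therefore behaves very differently depending on whether $\gamma$ is the fibre slope or not.

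First I would handle the generic case: if $p/q \neq rs$, then $\gamma$ is not the regular fibre slope, so the Seifert fibration of the exterior extends over the filling solid torus, with the core of the filling torus becoming a (possibly exceptional) fibre. One computes its order as $\Delta(p/q, rs) = |p - qrs|$, using that $\Delta$ is the intersection number of the two slopes on the boundary torus. This yields a closed Seifert fibred space over $\mathbb{S}^2$ with (at most) three exceptional fibres of orders $|r|$, $|s|$, $|p-qrs|$. When $|p - qrs| = 1$ one of these is regular, and a small Seifert fibred space over $\mathbb{S}^2$ with at most two exceptional fibres is a lens space (or $\mathbb{S}^3$, $\mathbb{S}^1\times\mathbb{S}^2$); identifying it precisely as $L(|p|, qs^2)$ is a homological computation — $H_1$ of the filled manifold is $\mathbb{Z}/|p|$ since the longitude is nullhomologous, and pinning down the second parameter requires keeping careful track of the Seifert invariants $a', b'$ satisfying $|ab' + ba'| = 1$ and how the filling curve sits relative to the section. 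The case $p = rs$, $q = 1$ is the fibre-slope filling: filling along the regular fibre slope produces a connected sum, and one identifies it as $L(r,s) \# L(s,r)$, e.g. by recognising that the complement of a neighbourhood of the two exceptional fibres in the filled manifold is a union of two solid tori glued along an annulus, or by a direct Kirby calculus / Heegaard argument.

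The step I expect to be the main obstacle is the precise identification of the lens space parameters in the $p = qrs \pm 1$ case — not the existence of a lens space (which follows cleanly from the Seifert structure) but getting $qs^2$ right rather than some other representative of the same lens space up to the standard equivalences $L(p,q) \cong L(p, q')$ when $q q' \equiv 1$ or $q \equiv q' \pmod p$. This requires a careful bookkeeping of orientations and of the Seifert invariants, and is where the classical references \cite{moser} do the real work. I would either reproduce that computation via the explicit surgery description (sliding the surgery curve on the Heegaard torus) or simply cite \cite{moser} for the exact normalisation, since for the applications in this paper only the homeomorphism type — and in particular the fact that these fillings are lens spaces, reducible, or small Seifert fibred — is what gets used downstream.
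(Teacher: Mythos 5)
The paper gives no proof of this statement at all --- it is quoted verbatim as a classical result with a citation to Moser, so there is nothing internal to compare against. Your sketch is a correct outline of the standard argument (extend the Seifert fibration over the filling torus away from the fibre slope $rs$, read off the order $|p-qrs|$ of the new fibre, and treat the fibre-slope and distance-one cases separately), and you correctly identify that the only genuinely delicate point is normalising the lens space parameter $qs^2$, which is exactly the bookkeeping carried out in \cite{moser}.
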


Viewing the torus knot $T_{r,s}$ as the $(r,s)$-cable of the unknot, the following result can be considered a generalisation of this.

\begin{theorem}[\cite{gordon}]
\label{theorem:cable_knot}
    Let $K=C_{r,s}(J)$ be the $(r,s)$-cable of a knot $J$ in $\mathbb{S}^3$. Then 
    $$\mathbb{S}^3_{C_{r,s}(J)}(p/q) \cong
        \begin{cases}
            \mathbb{S}^3_{J}(r/s) \# L(s,r) & p=rs, q=1, \\
            \mathbb{S}^3_{J}(p/qs^2) & p=qrs\pm1, \\
            \mathbb{S}^3_{J} \cup_{\mathbb{T}^2} M' & \text{otherwise}, \\
        \end{cases}
    $$
    where $M'$ is a Seifert fibred space over $\mathbb{D}^2$ with two exceptional fibres of orders $|s|$ and $|p-qrs|$.
\end{theorem}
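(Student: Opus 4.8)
The plan is to exploit the JSJ structure of the cable knot complement. By Theorem~\ref{theorem:jsj}, writing $T := \partial\nu(C)$ for the JSJ torus of the satellite, we have the decomposition
$$\mathbb{S}^3_{J_{r,s}(C)} = \mathbb{S}^3_C \cup_T \mathbb{S}^3_{J_{r,s}},$$
where the cable space $\mathbb{S}^3_{J_{r,s}}$ is Seifert fibred over the annulus orbifold $\mathbb{A}^2(r/s)$ with a single exceptional fibre of order $|s|$, whose regular fibre has slope $rs$ on the outer boundary $\partial\nu(K)$ and slope $r/s$ on $T$. The crucial observation is that the surgery slope $p/q$ is filled on $\partial\nu(K)$, which lies in the cable space and is disjoint from $T$, so the surgery leaves $\mathbb{S}^3_C$ and the gluing torus $T$ untouched. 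It therefore suffices to understand the Dehn filling of $\mathbb{S}^3_{J_{r,s}}$ along $p/q$ on $\partial\nu(K)$ and then reglue $\mathbb{S}^3_C$ along $T$. The governing quantity is the distance $d := \Delta(p/q, rs) = |p - qrs|$ between the surgery slope and the fibre slope.

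The key tool is the standard lemma on Dehn filling a Seifert fibred space along a boundary torus: filling along the fibre slope ($d=0$) destroys the fibration and yields a reducible manifold, whereas filling along any other slope ($d \geq 1$) extends the fibration across the filling solid torus, whose core becomes a regular fibre when $d=1$ and an exceptional fibre of order $d$ when $d \geq 2$, with the corresponding boundary circle of the base orbifold capped off by a disc carrying a cone point of order $d$. First I would record that $\gcd(p,q)=1$ forces $d=0$ to occur exactly when $p/q = rs/1$: the equation $p = qrs$ gives $q \mid p$, hence $q = 1$ and $p = rs$. The trichotomy $d=0$, $d=1$, $d \geq 2$ then matches the three cases of the statement.

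For the generic case $d \geq 2$, capping the outer boundary circle of $\mathbb{A}^2(r/s)$ produces the disc orbifold with two cone points of orders $|s|$ and $d = |p-qrs|$, so the filled cable space is precisely the Seifert fibred space $M'$ of the statement, and regluing along $T$ gives $\mathbb{S}^3_C \cup_{\mathbb{T}^2} M'$; here $T$ stays essential because $C$ is non-trivial, so no further collapse occurs. For $d=1$ the base becomes a disc with one cone point of order $|s|$, i.e.\ a solid torus, so the filled cable space is itself a solid torus and regluing it along $T$ realises the surgery as a Dehn filling of $\mathbb{S}^3_C$. The only remaining work is to identify the induced slope on $T$: tracking the meridian of the new solid torus through the cable space and rewriting it in the $(\mu_C, \lambda_C)$ basis, the winding number $s$ enters quadratically through the longitudinal framing, which I expect to yield slope $p/qs^2$ and hence $\mathbb{S}^3_C(p/qs^2)$.

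The reducible case $d=0$, i.e.\ $p/q = rs/1$, is where the genuine work lies and is what I expect to be the main obstacle. Here the cabling annulus $A$, the portion of the concentric torus carrying $T_{r,s}$ that survives after removing $\nu(K)$, is a vertical essential annulus whose two boundary curves both have the fibre slope $rs$ on $\partial\nu(K)$; after filling along $rs$ these curves bound meridian discs of the filling solid torus, and capping $A$ with them produces an essential $2$-sphere $\Sigma$. I would then show that $\Sigma$ separates the filled manifold and identify the two sides: the piece on the core side caps off to the lens space $L(s,r)$ (independent of $C$, and consistent with the torus-knot computation $C=U$ of Theorem~\ref{theorem:torus_knot}), while the piece meeting $T$ caps off to a solid torus attached to $\mathbb{S}^3_C$ along the slope $r/s$, giving $\mathbb{S}^3_C(r/s)$. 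Verifying that $\Sigma$ is indeed a reducing sphere and pinning down both summands here, together with the exact $qs^2$ in the denominator of Case~2, is the delicate bookkeeping at the heart of the argument; everything else follows formally from the Seifert filling lemma.
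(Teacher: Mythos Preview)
The paper does not prove this theorem: it is quoted verbatim from Gordon's paper \cite{gordon} and stated without proof, serving only as a tool in later arguments (Corollary~\ref{corollary:csc}, Proposition~\ref{proposition:surgered}). There is therefore no proof in the paper to compare against.

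That said, your sketch is essentially Gordon's original argument and is correct in outline. The decomposition $\mathbb{S}^3_{J_{r,s}(C)} = \mathbb{S}^3_C \cup_T \mathbb{S}^3_{J_{r,s}}$, the trichotomy on $d = |p-qrs|$, the Seifert-filling lemma for $d\geq 1$, and the cabling-annulus-to-reducing-sphere argument for $d=0$ are exactly the ingredients of the standard proof. Two small remarks: your hesitation about the slope $p/qs^2$ is unwarranted --- the computation is routine once you write $\mu_K = s\mu_C$ and $\lambda_K = r\mu_C + s\lambda_C$ in $H_1(T)$ (winding number $|s|$) and push the bounding disc of the filled solid torus through; and your claim that $T$ remains essential for $d\geq 2$ tacitly assumes $C$ is non-trivial, which the statement does not require --- but when $C=U$ the formula still holds, with $\mathbb{S}^3_U \cup_{\mathbb{T}^2} M'$ simply being the closed Seifert fibred space of Theorem~\ref{theorem:torus_knot}.
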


We also note the following advancement towards the cosmetic surgery conjecture. 

\begin{theorem}[\cite{ni-wu}]
\label{theorem:csc}
    Let $K\subset \mathbb{S}^3$ and suppose that $\mathbb{S}^3_K(p/q) \cong \mathbb{S}^3_K(p'/q')$. Then $p/q=\pm p'/q'$. 
\end{theorem}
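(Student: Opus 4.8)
The statement is precisely the theorem of Ni and Wu, so the plan is to reproduce their strategy, which combines a classical finite-type invariant with Heegaard Floer correction terms. First I would record the homological constraint: since $H_1(\mathbb{S}^3_K(p/q)) \cong \mathbb{Z}/|p|\mathbb{Z}$, any orientation-preserving homeomorphism $\mathbb{S}^3_K(p/q) \cong \mathbb{S}^3_K(p'/q')$ forces $|p| = |p'|$. After normalising we may therefore assume $p = p' > 0$, and it remains to show that $q$ and $q'$ are related so that $p/q = \pm p'/q'$.

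Next I would bring in the Casson--Walker invariant $\lambda$, following the earlier work of Boyer--Lines. Its rational surgery formula expresses $\lambda(\mathbb{S}^3_K(p/q))$ as $\tfrac{q}{2p}\Delta_K''(1)$ plus a term depending only on the lens space $L(p,q)$ (a Dedekind-sum expression). Since $\lambda$ is an oriented homeomorphism invariant, the two surgeries have equal $\lambda$, and when $\Delta_K''(1) \neq 0$ the linear-in-$q/p$ contribution of the knot term, together with $|p|=|p'|$, strongly restricts the slopes and rules out any genuinely distinct cosmetic pair except an opposite one. This disposes of all knots with $\Delta_K''(1) \neq 0$; the substantive remaining case is $\Delta_K''(1) = 0$, where $\lambda$ alone cannot separate the slopes.

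To handle that case I would invoke Heegaard Floer homology. The orientation-preserving homeomorphism induces a bijection of $\mathrm{Spin}^c$ structures preserving the correction terms $d(\cdot,\mathfrak{s})$, so the two surgeries carry the same multiset of $d$-invariants. Using the Ozsv\'ath--Szab\'o rational surgery formula, these correction terms are computed from the lens-space correction terms together with the knot's non-negative integer invariants $V_0 \geq V_1 \geq \cdots \geq 0$ (and their mirror counterparts). I would extract from the equality of $d$-invariant multisets, and in particular from the behaviour of $V_0$, enough arithmetic information about $q$ and $q'$ modulo $p$ to conclude that the slopes can differ at most by a sign, giving $p/q = \pm p'/q'$.

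The main obstacle is precisely this last Floer-theoretic step: one must track how the homeomorphism permutes $\mathrm{Spin}^c$ structures, match the correction terms across both surgeries, and convert the resulting equalities into a congruence on the surgery coefficients. This demands careful combinatorial control of the $V_i$ invariants appearing in the surgery formula and of the lens-space correction terms, which is where the bulk of the technical work in Ni--Wu resides. Assembling the homological reduction, the Casson--Walker obstruction, and the $d$-invariant comparison then yields the desired conclusion $p/q = \pm p'/q'$.
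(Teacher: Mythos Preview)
The paper does not supply a proof of this statement: it is quoted from Ni--Wu with a bare citation and immediately used (in Corollary~\ref{corollary:csc}) as a black box. There is therefore nothing in the paper to compare your proposal against.

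Your sketch is a reasonable high-level account of how one might assemble such a proof, blending the Boyer--Lines Casson--Walker argument (which handles knots with $\Delta_K''(1)\neq 0$) with the Heegaard Floer rational surgery formula and the $V_i$ correction-term machinery. It is worth noting that Ni--Wu's own argument is more uniformly Heegaard--Floer--theoretic rather than split into a Casson--Walker case and a residual case, but the ingredients you name are the right ones, and your acknowledgement that the $d$-invariant comparison is where the technical weight lies is accurate. As a minor point, the theorem as stated in the paper tacitly assumes $K$ is nontrivial (the unknot admits lens-space coincidences such as $L(5,2)\cong L(5,3)$ with $5/2\neq\pm 5/3$), which is how Ni--Wu state it and how it is used downstream.
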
 

Combining the previous two results gives the following corollary, which will prove to be very useful when considering cable knots. 

\begin{corollary} 
\label{corollary:csc}
    Let $K'=C_{r,s}(K)$ be an $(r,s)$-cable of $K$ and let $p/q$ be a slope with \mbox{$|p-qrs|=1$}. 
    
    Then there is no orientation-preserving homeomorphism $\mathbb{S}^3_K(p/q)\cong\mathbb{S}^3_{K'}(p/q)$. 
    \begin{proof} 
        Suppose otherwise, so that $\mathbb{S}^3_K(p/q)\cong\mathbb{S}^3_{K'}(p/q)\cong\mathbb{S}^3_K(p/qs^2)$ (as $|p-qrs|=1$). Then the slopes $p/q$ and $p/qs^2$ form a pair of cosmetic surgeries on $K$. By Theorem \ref{theorem:csc}, this is impossible because $|s|>1 \implies p/q \neq \pm p/qs^2$. 
    \end{proof} 
\end{corollary}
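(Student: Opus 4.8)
The plan is to argue by contradiction and reduce to the cosmetic surgery obstruction of Theorem \ref{theorem:csc}. Suppose there were an orientation-preserving homeomorphism $\mathbb{S}^3_K(p/q) \cong \mathbb{S}^3_{K'}(p/q)$. The role of the hypothesis $|p - qrs| = 1$ is to place the cable surgery on the right-hand side into the middle branch of Theorem \ref{theorem:cable_knot}: since $|p - qrs| = 1$ we are certainly not in the first branch (which requires $p = rs$, $q = 1$, i.e. $|p - qrs| = 0$), so that theorem gives $\mathbb{S}^3_{K'}(p/q) = \mathbb{S}^3_{J_{r,s}(K)}(p/q) \cong \mathbb{S}^3_K(p/qs^2)$.

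Before chaining these together I would record the routine bookkeeping that $p/qs^2$ is a genuine coprime slope: from $p = qrs \pm 1$ we read off $\gcd(p, s) = 1$, while $\gcd(p, q) = 1$ holds by assumption, so $\gcd(p, qs^2) = 1$, and of course $q \neq 0$. Composing the two homeomorphisms then yields $\mathbb{S}^3_K(p/q) \cong \mathbb{S}^3_K(p/qs^2)$, so the slopes $p/q$ and $p/qs^2$ would form a purely cosmetic pair on the single knot $K$. Theorem \ref{theorem:csc} forces $p/q = \pm p/qs^2$; but these reduced fractions have denominators of absolute value $|q|$ and $|q|s^2$, which differ because $|s| > 1$, a contradiction.

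The argument is essentially a two-line chain of three cited results, so there is no serious obstacle. The only points requiring care are checking that the hypothesis $|p - qrs| = 1$ selects exactly the second case of the trichotomy in Theorem \ref{theorem:cable_knot} and is disjoint from the first, and confirming that the output slope $p/qs^2$ is in lowest terms so that Theorem \ref{theorem:csc} genuinely applies to it.
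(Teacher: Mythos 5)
Your proposal is correct and follows exactly the paper's own argument: apply Theorem \ref{theorem:cable_knot} in the $|p-qrs|=1$ case to get $\mathbb{S}^3_{K'}(p/q)\cong\mathbb{S}^3_K(p/qs^2)$, then invoke Theorem \ref{theorem:csc} and the fact that $|s|>1$ forces $p/q\neq\pm p/qs^2$. Your extra check that $p/qs^2$ is in lowest terms is a sensible bit of bookkeeping the paper leaves implicit.
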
 

We will next see how we can combine these results with Theorem \ref{theorem:jsj} in order to investigate and compare the manifolds produced by performing Dehn surgery on knots.

\section{JSJ decompositions} 
\label{section:jsj} 

In this section, we will use JSJ decompositions to gather crucial information about when we can have an orientation-preserving homeomorphism $\mathbb{S}^3_K(p/q)\cong\mathbb{S}^3_{K'}(p'/q')$ between manifolds obtained by Dehn surgery on knots $K$ and $K'$. In particular, we will fix $K$ to be either a hyperbolic knot or a satellite knot $P(J)$ by a hyperbolic pattern $P = Q \cup U$ and show that this forces the other knot to be of a very similar form. For the time being, we allow the surgery slopes $p/q$ and $p'/q'$ to differ, yielding more general results. Notation of the form $\mathbb{S}^3_P(p/q)$ always corresponds to filling the component corresponding to $Q$ rather than $U$.

\subsection{Creation and destruction} 

Given a knot $K$, it is very easy to describe the JSJ decomposition of the complement $\mathbb{S}^3_K$; however, after Dehn filling to produce $\mathbb{S}^3_K(p/q)$ (which remains irreducible for $|q|\geq2$ by \cite{gl-red}), it is not always the case that the JSJ tori are formed precisely from those of $\mathbb{S}^3_K$. There are two ways in which the JSJ decomposition can change: creation of new JSJ tori and destruction of existing JSJ tori. We will see that the possibility of creation of new JSJ tori can be immediately ruled out by implementing the assumption that $|q|\geq3$, and that destruction of existing JSJ tori only occurs in one very specific situation. Note that, in order for a JSJ torus to disappear during Dehn filling, the process must cause it to either bound some solid torus $\mathbb{S}^1 \times \mathbb{D}^2$, cobound (with another JSJ torus) some thickened torus $\mathbb{T}^2 \times \mathbb{I}$ or have Seifert fibred spaces on either side which match up to merge into the same JSJ piece.  

The following result gives a precise description of the JSJ decomposition of $\mathbb{S}^3_K(p/q)$, demonstrating that at most one JSJ torus can disappear during filling and that it must come from a cable space (see also \cite{lackenby} and \cite{sorya}). 

\begin{proposition}
\label{proposition:surgered} 
    Let $K$ be a knot with outermost JSJ piece $Y$ and let $p/q$ be a slope with $|q|\geq3$. 

    Then there is a JSJ piece of $\mathbb{S}^3_K(p/q)$ which contains the surgery curve and it takes one of the following forms: 
    \begin{enumerate}[(i)] 
        \item $Y(p/q)$; 
        \item $\hat{Y}(p/qs^2) \iff Y$ is an $(r,s)$-cable space next to a JSJ piece $\hat{Y}$ and $|p-qrs|=1$. 
    \end{enumerate} 
    \begin{proof}
        Let $Y$ be the outermost JSJ piece of $\mathbb{S}^3_K$. We will use the classification of JSJ pieces in Theorem \ref{theorem:jsj} to consider each possibility in turn.  

        Case (1) arises when $Y=\mathbb{S}^3_{T_{a,b}}$ for a torus knot $K=T_{a,b}$. Since $|q|\geq2$, we can ignore the $|p-qrs|=0$ case of Theorem \ref{theorem:torus_knot} and observe that $Y(p/q)=\mathbb{S}^3_K(p/q)$ is irreducible and atoroidal. We deduce that $Y(p/q)$ is still a single Seifert fibred JSJ piece.  

        Case (4) happens when $Y$ is the complement of a certain type of hyperbolic link. When $|\partial Y|>1$, the assumption $|q| \geq 3$ tells us that $Y(p/q)$ is hyperbolic and so it remains a single JSJ piece. In the case where $|\partial Y|=1$ and $K$ is a hyperbolic knot, a stronger assumption of $|q|\geq9$ is required to ensure that $Y(p/q)=\mathbb{S}^3_K(p/q)$ remains hyperbolic; the $|q|\geq3$ assumption allows the possibility that it is Seifert fibred, but this does not affect the conclusion as there are no other JSJ pieces over which this structure could extend. 

        Case (3) takes place when $Y$ is a composing space. Regular fibres have meridional slope on the boundary $\partial \mathbb{S}^3_K$, so $|q|\geq1$ implies that the Seifert fibration extends to $Y(p/q)$. Since this adds an exceptional fibre of order $|q|\geq2$, the boundary tori do not compress; moreover, the Seifert fibred structure is otherwise unchanged and unique, so it cannot extend over any adjacent JSJ pieces for precisely the reason that it did not before filling. Hence $Y(p/q)$ is a single Seifert fibred JSJ piece.  

        Case (2) occurs when $Y=\mathbb{S}^3_{C_{r,s}}$ is a cable space and $K=C_{r,s}(J)$ is a cable knot. Since $|q|\geq2$, we can ignore the $|p-qrs|=0$ case of Theorem \ref{theorem:cable_knot}. When $|p-qrs|>1$, the Seifert fibred structure on $Y$ extends to $Y(p/q)$ over the new solid torus, introducing a new exceptional fibre of order $|p-qrs|>1$ (which prevents compression of the boundary tori). As the Seifert fibred structure is unique (unless $|s|=|p-qrs|=2$: see \cite[Proposition~3.6]{sorya} for a more detailed discussion of this), there is no way for it to extend over any other attached JSJ pieces, and so $Y(p/q)$ is a single Seifert fibred JSJ piece. 
        
        By Theorem \ref{theorem:cable_knot}, compression happens precisely when $|p-qrs|=1$, in which case we have $\mathbb{S}^3_K(p/q)\cong\mathbb{S}^3_J(p/qs^2)$. However, letting $\hat{Y}$ be the outermost JSJ piece of $\mathbb{S}^3_J$, we will show that the same thing cannot happen again. If $J=C_{r',s'}(J')$ and $|p-qr's's^2|=1$, then 
        $$0 \leq 4|r-r's's| \leq |q||s||r-r's's| = |qrs - qr's's^2| \leq |p-qrs| + |p-qr's's^2| =2$$ 
        and we obtain $r/s=r's' \in \mathbb{Z}$, a contradiction. Hence the cable cannot be iterated and there is no further compression of tori. Since $|qs^2|>|q|\geq3$ and the JSJ piece $\hat{Y}$ is not a compressing cable space, we can simply apply the same arguments as we did for each of the other possibilities for $\hat{Y}$ to deduce that the JSJ piece of $\mathbb{S}^3_K(p/q)$ containing the surgery curve in this case is precisely $\hat{Y}(p/qs^2)$.  
    \end{proof}
\end{proposition}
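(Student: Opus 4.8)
The plan is to run a case analysis over the type of the outermost JSJ piece $Y$ of $\mathbb{S}^3_K$: by Theorem \ref{theorem:jsj} (allowing also $K$ hyperbolic or a torus knot, where $\mathbb{S}^3_K$ is itself a single piece) it is either a hyperbolic knot complement, a torus knot complement, a cable space, a composing space, or a hyperbolic link complement. Since $Y$ is outermost it contains $\partial\nu(K)$, so the surgery curve always lies in $Y(p/q)$, and the goal in each case is to decide whether $Y(p/q)$ persists as a single JSJ piece of $\mathbb{S}^3_K(p/q)$ with its old bounding JSJ tori still essential (conclusion (i)), or whether one of those tori compresses, forcing a reduction to a Dehn filling of the companion complement. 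Throughout I would use $|q|\ge 3$ to keep the distance $\Delta$ between the surgery slope and the relevant regular-fibre slope large enough to invoke the surgery classifications and to rule out the creation of new tori inside the piece.

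For the hyperbolic cases: if $K$ is hyperbolic then $Y=\mathbb{S}^3_K$ and Theorem \ref{theorem:hyperbolic_knot} makes $Y(p/q)$ irreducible and atoroidal (possibly a small Seifert fibred space, which is harmless); if $Y$ is a hyperbolic link complement, Theorem \ref{theorem:hyperbolic_pattern} makes $Y(p/q)$ hyperbolic with incompressible boundary, so its tori stay essential and cannot merge with the unchanged neighbours. For the Seifert fibred cases: if $K$ is a torus knot, Moser's classification (Theorem \ref{theorem:torus_knot}) makes $Y(p/q)$ irreducible and atoroidal for $|q|\ge 2$; if $Y$ is a composing space, the regular fibre is meridional on $\partial\nu(K)$, so filling with $|q|\ge 2$ extends the fibration over the new solid torus and adds an exceptional fibre of order $|q|\ge 2$, leaving a hyperbolic base orbifold with nonempty boundary, hence a unique fibration and incompressible boundary. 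In all of these $Y(p/q)$ is a single JSJ piece and (i) holds.

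The delicate case is $Y=\mathbb{S}^3_{J_{r,s}}$ a cable space, so $K=J_{r,s}(C)$; here the regular fibre has slope $rs$ on $\partial\nu(K)$ and $\Delta(p/q,rs/1)=|p-qrs|$, which $|q|\ge 2$ forces to be nonzero. If $|p-qrs|\ge 2$, filling extends the fibration with a new exceptional fibre of order $|p-qrs|$, so $Y(p/q)$ is Seifert fibred over $\mathbb{D}^2$ with exceptional fibres of orders $|s|$ and $|p-qrs|$; the fibration is unique except in the sporadic case $|s|=|p-qrs|=2$, which I would handle by citing \cite[Proposition~3.6]{sorya}, and (i) holds. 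If $|p-qrs|=1$, Theorem \ref{theorem:cable_knot} identifies $\mathbb{S}^3_K(p/q)\cong\mathbb{S}^3_C(p/qs^2)$, the JSJ torus between $Y$ and the next piece $\hat{Y}$ compresses, and the problem reduces to a Dehn filling of $\mathbb{S}^3_C$. Since $|s|>1$ gives $|qs^2|\ge 12$, the whole analysis re-runs on $\mathbb{S}^3_C$ with outermost piece $\hat{Y}$; the only remaining possibility to exclude is a second compression, i.e. $\hat{Y}=\mathbb{S}^3_{J_{r',s'}}$ with $|p-qr's's^2|=1$, which I would rule out by the triangle inequality $|qrs-qr's's^2|\le|p-qrs|+|p-qr's's^2|=2$ against $|q||s|\ge 6$, forcing $r/s=r's'\in\mathbb{Z}$ and contradicting $\gcd(r,s)=1$ with $|s|>1$. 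Then the JSJ piece containing the surgery curve is $\hat{Y}(p/qs^2)$, which is (ii); since the case split is exhaustive and mutually exclusive, the equivalence asserted in (ii) follows. I expect the cable-space analysis to be the main obstacle, both because of the uniqueness-of-fibration bookkeeping needed to rule out fusion with a neighbouring piece and because of the arithmetic showing that a cable can be absorbed by a distance-one filling at most once.
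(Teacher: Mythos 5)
Your proposal follows essentially the same route as the paper's proof: the same case analysis over the JSJ piece types of Theorem \ref{theorem:jsj}, the same appeals to Theorems \ref{theorem:hyperbolic_knot}, \ref{theorem:hyperbolic_pattern}, \ref{theorem:torus_knot} and \ref{theorem:cable_knot} (including the caveat for $|s|=|p-qrs|=2$ handled via \cite[Proposition~3.6]{sorya}), and the same triangle-inequality argument showing a distance-one cable compression cannot iterate. The argument is correct and complete.
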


We will often use the notation $\tilde{Y}(\tilde{p}/\tilde{q})$ to represent the JSJ piece containing the surgery curve. This encompasses both cases and reflects the fact that the JSJ piece $\tilde{Y}$ will be the one which is most important in our discussion. Moreover, the slope $\tilde{p}/\tilde{q}$ satisfies $|\tilde{q}|\geq|q|$, so it is convenient to note that lower bounds on the denominator automatically apply.

\subsection{Hyperbolic knots} 

Now we will suppose that there is an orientation-preserving homeomorphism $\mathbb{S}^3_K(p/q) \cong \mathbb{S}^3_{K'}(p'/q')$ and use what we know about JSJ decompositions to compare $K$ and $K'$.  

Let's begin by considering at the scenario in which $K$ is a fixed hyperbolic knot. Proposition \ref{proposition:surgered} provides enough information to determine the basic type of the unknown knot $K'$. 

\begin{proposition} 
\label{proposition:knot}
    Let $K$ be a hyperbolic knot. Suppose that there exists an orientation-preserving homeomorphism $\mathbb{S}^3_K(p/q) \cong \mathbb{S}^3_{K'}(p'/q')$ for some knot $K'$ and some slopes $p/q$, $p'/q'$ with $|q|\geq9$ and $|q'|\geq3$.  

    Then one of the following holds: 
    \begin{enumerate}[(i)]
        \item $K'$ is hyperbolic; 
        \item $K'=C_{r,s}(\hat{K})$ is the $(r,s)$-cable of a hyperbolic knot $\hat{K}$, $|p'-q'rs|=1$ and there is an orientation-preserving homeomorphism $\mathbb{S}^3_K(p/q) \cong \mathbb{S}^3_{\hat{K}}(p'/q's^2)$.  
    \end{enumerate}
    \begin{proof} 
        Since $K$ is a hyperbolic knot and $|q|\geq9$, the filling $\mathbb{S}^3_K(p/q)$ remains hyperbolic by Theorem \ref{theorem:hyperbolic_knot} and contains no JSJ tori. We have an orientation-preserving homeomorphism $\mathbb{S}^3_K(p/q)\cong\mathbb{S}^3_{K'}(p'/q')$ for some other knot $K'$ whose complement $\mathbb{S}^3_{K'}$ has outermost JSJ piece $Y'$. By Proposition \ref{proposition:surgered}, the JSJ piece of $\mathbb{S}^3_{K'}(p'/q')$ containing the surgery curve -- in this case, the only JSJ piece -- is of the form $\tilde{Y}(\tilde{p}/\tilde{q})$; since this is hyperbolic, $\tilde{Y}$ must also be hyperbolic. Since $|\partial \tilde{Y}|=1$, we must have $\tilde{Y}\cong\mathbb{S}^3_{\tilde{K}}$ for some hyperbolic knot $\tilde{K}$. Reconciling $\tilde{Y}$ with $Y'$ and $\hat{Y}$ as in Proposition \ref{proposition:surgered} gives the two cases. 
    \end{proof} 
\end{proposition}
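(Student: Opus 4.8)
The plan is to apply Proposition \ref{proposition:surgered} to the complement of $K'$ and then reconcile the resulting description with the hypothesis that $\mathbb{S}^3_K(p/q)$ is hyperbolic. First I would observe that, since $K$ is hyperbolic and $|q|\geq9$, Theorem \ref{theorem:hyperbolic_knot} guarantees that $N := \mathbb{S}^3_K(p/q)$ is a closed hyperbolic $3$-manifold, hence irreducible and atoroidal with empty JSJ decomposition (it is a single JSJ piece, and that piece is hyperbolic). The hypothesised orientation-preserving homeomorphism identifies $N \cong \mathbb{S}^3_{K'}(p'/q')$, so the right-hand side must likewise consist of a single hyperbolic JSJ piece containing the surgery curve.

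Next I would feed $K'$ and the slope $p'/q'$ (noting $|q'|\geq3$) into Proposition \ref{proposition:surgered}: the JSJ piece of $\mathbb{S}^3_{K'}(p'/q')$ containing the surgery curve is of the form $\tilde{Y}'(\tilde{p}'/\tilde{q}')$, which is either $Y'(p'/q')$ for $Y'$ the outermost JSJ piece of $\mathbb{S}^3_{K'}$, or $\hat{Y}'(p'/q's^2)$ in the case that $Y'$ is an $(r,s)$-cable space adjacent to $\hat{Y}'$ with $|p'-q'rs|=1$. In either case this piece is orientation-preservingly homeomorphic to $N$, hence hyperbolic; and since $\mathbb{S}^3_{K'}(p'/q')$ has no other JSJ pieces, it \emph{is} all of $N$. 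So $\tilde{Y}'$ is a hyperbolic $3$-manifold with a single torus boundary component, i.e. $\tilde{Y}' \cong \mathbb{S}^3_{\tilde{K}}$ for some hyperbolic knot $\tilde{K}$ (one torus boundary component forces this via Theorem \ref{theorem:jsj}, case (4), with the removed component leaving the empty unlink).

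Finally I would split into the two cases of Proposition \ref{proposition:surgered}. In case (i), $\tilde{Y}' = Y'$ is the whole complement $\mathbb{S}^3_{K'}$, so $K'$ is itself hyperbolic; this is conclusion (i). In case (ii), $Y' = \mathbb{S}^3_{J_{r,s}}$ is an $(r,s)$-cable space containing $\partial\nu(K')$, so by Theorem \ref{theorem:jsj} the knot $K'=J_{r,s}(\hat{K})$ is the $(r,s)$-cable of the knot $\hat{K}$ whose complement is $\hat{Y}' = \tilde{Y}' \cong \mathbb{S}^3_{\hat{K}}$; since $\tilde{Y}'$ is hyperbolic, $\hat{K}$ is a hyperbolic knot, and Proposition \ref{proposition:surgered}(ii) also records $|p'-q'rs|=1$ together with $\mathbb{S}^3_K(p/q)\cong\mathbb{S}^3_{K'}(p'/q')\cong\hat{Y}'(p'/q's^2)=\mathbb{S}^3_{\hat{K}}(p'/q's^2)$. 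This is conclusion (ii), completing the argument.

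The only genuine subtlety — the "hard part" — is verifying that the JSJ piece produced by Proposition \ref{proposition:surgered} really is the \emph{entire} manifold $\mathbb{S}^3_{K'}(p'/q')$ rather than merely one piece among several: this relies on the fact that $N$ is hyperbolic and therefore has trivial JSJ decomposition, so any homeomorphic copy of it can contain no essential tori at all, forcing every JSJ piece of $\mathbb{S}^3_{K'}(p'/q')$ other than $\tilde{Y}'(\tilde{p}'/\tilde{q}')$ to be absent. Everything else is bookkeeping: matching up $\tilde{Y}'$ with the pieces $Y'$ and $\hat{Y}'$ named in Proposition \ref{proposition:surgered}, and reading off the case $K'$ falls into.
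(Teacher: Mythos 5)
Your argument is correct and follows essentially the same route as the paper's proof: invoke Theorem \ref{theorem:hyperbolic_knot} to conclude $\mathbb{S}^3_K(p/q)$ is hyperbolic with trivial JSJ decomposition, apply Proposition \ref{proposition:surgered} to $K'$ to identify the (unique) JSJ piece containing the surgery curve as $\tilde{Y}(\tilde{p}/\tilde{q})$ with $\tilde{Y}$ a hyperbolic knot complement, and then read off the two cases. The one point you flag as the ``hard part'' --- that this piece must be the entire filled manifold --- is exactly the observation the paper makes in passing, so nothing is missing.
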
 

We will write $(\tilde{K},\tilde{p}/\tilde{q})$ to mean either (i) $(K',p'/q')$ or (ii) $(\hat{K},p'/q's^2)$, depending on which case of Proposition \ref{proposition:knot} we are in. In both situations, $\tilde{K}$ is hyperbolic and there is an orientation-preserving homeomorphism \mbox{$\mathbb{S}^3_K(p/q) \cong \mathbb{S}^3_{\tilde{K}}(\tilde{p}/\tilde{q})$}.

\subsection{Hyperbolic patterns} 

We would like an analogous result for satellite knots which can be expressed as $K=P(J)$ for a hyperbolic pattern $P = Q \cup U$. However, as $\mathbb{S}^3_K(p/q)$ will now comprise more than one JSJ piece, there is an extra obstacle of checking that the orientation-preserving homeomorphism $\mathbb{S}^3_K(p/q) \cong \mathbb{S}^3_{K'}(p'/q')$ restricts to one between the JSJ pieces containing the surgery curves. 

Recall the following result of Gordon. 

\begin{lemma}[{\cite[Lemma~3.3]{gordon}}]
    \label{lemma:gordon} 
    Let $P = Q \cup U$ be any pattern with winding number $w$. 
    
    Then $H_1(\mathbb{S}^3_P(p/q)) \cong \mathbb{Z} \oplus \mathbb{Z}/d\mathbb{Z}$, where $d=\gcd(p,w)$, and the kernel of $H_1(\partial\mathbb{S}^3_P(p/q)) \to H_1(\mathbb{S}^3_P(p/q))$ is generated by
    $$ 
    \begin{cases}
        (p/d)\lambda_U + (qw^2/d) \mu_U & \text{if} \ w\neq0, \\
        \lambda_U & \text{if} \ w=0. \\
    \end{cases}
    $$
    \begin{proof} 
        Observe that $H_1(\mathbb{S}^3_P)\cong\mathbb{Z}\oplus\mathbb{Z}$ is freely generated by the meridians $\mu_Q$ and $\mu_U$. Moreover, note that $\lambda_Q = w\mu_U$ and $\lambda_U = w\mu_Q$ in $H_1(\mathbb{S}^3_P)$. After Dehn filling the component corresponding to $Q$ along slope $p/q$, we see that $p\mu_Q+q\lambda_Q=0$ in $H_1(\mathbb{S}^3_P(p/q))$. When $w=0$, $p\mu_Q$ vanishes; since $\gcd(p,0)=|p|$, we obtain $H_1(\mathbb{S}^3_P(p/q))\cong\mathbb{Z}\oplus\mathbb{Z}/|p|\mathbb{Z}$, and we still have $\lambda_U=0$ so the kernel is generated by $\lambda_U$. When $w\neq 0$, we have $p\mu_Q+qw\mu_U=0$, so $H_1(\mathbb{S}^3_P(p/q))\cong\mathbb{Z}\oplus\mathbb{Z}/d\mathbb{Z}$; observing also that $p\lambda_U+qw^2\mu_U=0$, it is simple to deduce that the kernel is generated by $(p/d)\lambda_U + (qw^2/d) \mu_U$, as required. 
    \end{proof}
\end{lemma}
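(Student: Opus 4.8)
The plan is to work entirely at the level of first homology, since both assertions are statements about $H_1(\mathbb{S}^3_P(p/q))$ and about which boundary slope lies in the kernel of the map induced by inclusion of the remaining boundary torus.

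First I would record the homology of the pattern complement before filling. Since $P = U \cup Q$ is a two-component link in $\mathbb{S}^3$, Alexander duality gives $H_1(\mathbb{S}^3_P) \cong \mathbb{Z}^2$, freely generated by the meridians $[\mu_U]$ and $[\mu_Q]$. The only geometric input is the standard linking-number formula for the class of a Seifert longitude of one component of a link in $\mathbb{S}^3$: here $\operatorname{lk}(U,Q)$ equals the winding number $w$ (this is essentially the definition of $w$, namely the algebraic intersection number of $Q$ with a meridian disc of the solid torus $\mathbb{S}^3_U$), so $[\lambda_U] = w[\mu_Q]$ and $[\lambda_Q] = w[\mu_U]$ in $H_1(\mathbb{S}^3_P)$. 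Next I would carry out the filling: Dehn filling the $Q$-cusp along the slope $p/q$ kills the class $p[\mu_Q] + q[\lambda_Q] = qw[\mu_U] + p[\mu_Q]$, so $H_1(\mathbb{S}^3_P(p/q))$ is the quotient of $\mathbb{Z}^2$ (in the coordinates $[\mu_U],[\mu_Q]$) by the single relator $(qw, p)$. Reducing this $1 \times 2$ relation matrix to Smith normal form shows the torsion is $\mathbb{Z}/\gcd(qw,p)\mathbb{Z}$, and since $\gcd(p,q)=1$ we have $\gcd(qw,p) = \gcd(w,p)$; hence $H_1(\mathbb{S}^3_P(p/q)) \cong \mathbb{Z} \oplus \mathbb{Z}/\gcd(p,w)\mathbb{Z}$, with the convention $\gcd(p,0) = |p|$ covering the case $w=0$.

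It then remains to identify the rationally nullhomologous curve on the remaining boundary torus $\partial\nu(U)$, measured in the coordinates inherited from the solid torus $\mathbb{S}^3_U$ (equivalently, the companion coordinates in which the satellite gluing identifies this torus' meridian with $\mu_C$ and its longitude with $\lambda_C$). If $w = 0$, then the relator is just $p[\mu_Q]$, so $[\mu_U]$ generates the free part while $[\lambda_U] = w[\mu_Q] = 0$; thus $\lambda_U$ itself is nullhomologous, and since $\lambda_U$ plays the role of the meridian in these coordinates this is the slope $1/0$. If $w \neq 0$, I would look for the boundary class with $a[\mu_U] + b[\lambda_U] = a[\mu_U] + bw[\mu_Q] = 0$; combining with the surgery relation $qw[\mu_U] + p[\mu_Q] = 0$ forces $(a,b)$ proportional to $(qw^2, p)$, and indeed $qw^2[\mu_U] + p[\lambda_U] = w\bigl(qw[\mu_U] + p[\mu_Q]\bigr) = 0$, so the nullhomologous curve $qw^2\mu_U + p\lambda_U$ has slope $p/qw^2$ in companion coordinates. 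The computation is entirely routine linear algebra; the only thing requiring genuine care — and the point most likely to trip one up — is the bookkeeping of conventions, namely that the slope of the nullhomologous curve is reported in the framing coming from $\mathbb{S}^3_U$ rather than the $(\mu_U,\lambda_U)$ framing (so that it agrees with the companion surgery slopes appearing in Theorems \ref{theorem:torus_knot} and \ref{theorem:cable_knot}), together with keeping the signs in the longitude identities consistent.
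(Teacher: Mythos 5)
Your proposal is correct and follows essentially the same route as the paper: compute $H_1(\mathbb{S}^3_P)\cong\mathbb{Z}\langle[\mu_U],[\mu_Q]\rangle$ with $[\lambda_U]=w[\mu_Q]$, $[\lambda_Q]=w[\mu_U]$, impose the single filling relation $qw[\mu_U]+p[\mu_Q]=0$, and read off the torsion $\mathbb{Z}/\gcd(qw,p)\mathbb{Z}=\mathbb{Z}/\gcd(p,w)\mathbb{Z}$ together with the torsion boundary class $p[\lambda_U]+qw^2[\mu_U]$. Your explicit remarks on the Smith normal form step and on the slope convention (that $\lambda_U$ plays the role of the meridian on the remaining boundary torus, matching the companion framing) are details the paper leaves implicit, but the argument is the same.
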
 

Consequently, $\mathbb{S}^3_P(p/q)$ can only be homeomorphic to a knot complement when $\gcd(p,w)=1$. This is significant when it is a single JSJ piece because by Theorem \ref{theorem:jsj}, the only possible JSJ pieces of a satellite knot complement with a single boundary component take the form of simple knot complements. 

We will now see that the homeomorphism $\mathbb{S}^3_K(p/q) \cong \mathbb{S}^3_{K'}(p'/q')$ restricts to a \emph{slope-preserving} homeomorphism $\mathbb{S}^3_P(p/q) \cong \mathbb{S}^3_{P'}(p'/q')$ between the filled pattern pieces (meaning that the meridian and longitude -- and hence all other slopes -- on the boundary torus are identified). 

\begin{proposition} 
\label{proposition:pattern}
    Let $K=P(J)$ be a satellite of a knot $J$ by a hyperbolic pattern \mbox{$P = Q \cup U$} with winding number $w$. Suppose that there exists an orientation-preserving homeomorphism \mbox{$\mathbb{S}^3_K(p/q)\cong\mathbb{S}^3_{K'}(p'/q')$} for some knot $K'$ and some slopes $p/q, p'/q'$ with $\gcd(p,w)\neq1$ and $|q|,|q'|\geq3$. 

    Then $K'=P'(J)$ is a satellite of the same knot $J$ by a pattern $P'$ and there is a slope-preserving homeomorphism $\mathbb{S}^3_P(p/q)\cong\mathbb{S}^3_{P'}(p'/q')$, where one of the following holds: 
    \begin{enumerate}[(i)] 
        \item $P' = Q' \cup U'$ is hyperbolic; 
        \item $P' = C_{r,s}(\hat{P})$ is the $(r,s)$-cable of a hyperbolic pattern $\hat{P} = \hat{Q} \cup \hat{U}$, $|p'-q'rs|=1$ and there is a slope-preserving homeomorphism $\mathbb{S}^3_P(p/q) \cong \mathbb{S}^3_{\hat{P}}(p'/q's^2)$. 
    \end{enumerate}
    \begin{proof}
        Since the outermost JSJ piece $Y=\mathbb{S}^3_P$ of $\mathbb{S}^3_K$ is hyperbolic and $|q|\geq3$, Proposition \ref{proposition:surgered} tells us that the JSJ piece containing the surgery curve is $Y(p/q)=\mathbb{S}^3_P(p/q)$. Consider the JSJ piece of $\mathbb{S}^3_{K'}$ containing the surgery curve, which is of the form $\tilde{Y}(\tilde{p}/\tilde{q})$ by Proposition \ref{proposition:surgered}. The homeomorphism $\mathbb{S}^3_K(p/q)\cong\mathbb{S}^3_{K'}(p'/q')$ preserves the JSJ decompositions, so there must be at least one JSJ torus present in $\mathbb{S}^3_{K'}(p'/q')$. If $K'$ were a simple knot, so that $\tilde{Y}=\mathbb{S}^3_{K'}$, then $\tilde{Y}(\tilde{p}/\tilde{q})=\mathbb{S}^3_{K'}(p'/q')$ and there are no JSJ tori. We deduce that $K'$ is not a simple knot and therefore must be a satellite knot. 
        
        Since $\gcd(p,w)\neq1$, Lemma \ref{lemma:gordon} implies that the JSJ piece $\mathbb{S}^3_P(p/q)$ cannot be homeomorphic to a knot complement. By Theorem \ref{theorem:jsj}, the only possible JSJ pieces of $\mathbb{S}^3_{K'}$ with a single boundary component are simple knot complements, so the homeomorphism $\mathbb{S}^3_K(p/q)\cong\mathbb{S}^3_{K'}(p'/q')$ must take $\mathbb{S}^3_P(p/q)$ to the JSJ piece $\tilde{Y}(\tilde{p}/\tilde{q})$ that was created during Dehn filling. Therefore we also have an orientation-preserving homeomorphism from the companion knot complement $\mathbb{S}^3_J$ to some other knot complement $\mathbb{S}^3_{J'}\subset\mathbb{S}^3_{K'}$ which is not affected by the filling. Since knots are determined by their complements \cite{gl-knots}, it follows that $J=J'$ and additionally that the homeomorphism $\mathbb{S}^3_J \cong \mathbb{S}^3_{J'}$ is slope-preserving. We can now express the satellite knot as $K'=P'(J)$ for a pattern $P'$ such that $\tilde{Y}(\tilde{p}/\tilde{q})=\mathbb{S}^3_{P'}(p'/q')$ and there is a slope-preserving homeomorphism $\mathbb{S}^3_P(p/q)\cong\mathbb{S}^3_{P'}(p'/q')$. 

        By Proposition \ref{proposition:surgered}, $\mathbb{S}^3_{P'}(p'/q')=\tilde{Y}(\tilde{p}/\tilde{q})$ for some JSJ piece $\tilde{Y}\subset\mathbb{S}^3_{K'}$; since this is hyperbolic by Theorem \ref{theorem:hyperbolic_pattern}, $\tilde{Y}$ must also be hyperbolic. Since $|\partial\tilde{Y}|=2$, we must have $\tilde{Y} \cong \mathbb{S}^3_{\tilde{P}}$ for some pattern $\tilde{P} = \tilde{Q} \cup \tilde{U}$. The two options in Proposition \ref{proposition:surgered} yield each of the possible pattern types. 
    \end{proof} 
\end{proposition} 

We will henceforth use $(\tilde{P},\tilde{p}/\tilde{q})$ to denote either (i) $(P',p'/q')$ or (ii) $(\hat{P},p'/q's^2)$, depending on which case of Proposition \ref{proposition:pattern} we are in. Thus $\tilde{P} = \tilde{Q} \cup \tilde{U}$ is a hyperbolic pattern and there is a slope-preserving homeomorphism $\mathbb{S}^3_P(p/q) \cong \mathbb{S}^3_{\tilde{P}}(\tilde{p}/\tilde{q})$.

\subsection{Pattern properties} 

Before moving onto the core of the proofs of our main theorems, we take a closer look at what a slope-preserving homeomorphism $\mathbb{S}^3_P(p/q) \cong \mathbb{S}^3_{P'}(p'/q')$ means for the patterns. 

We begin by looking at the special case of patterns with winding number zero. 

\begin{lemma} 
\label{lemma:winding} 
    Let $P = Q \cup U$ be any pattern with winding number $w=0$. 
    
    If there exists a slope-preserving homeomorphism $\mathbb{S}^3_P(p/q)\cong\mathbb{S}^3_{P'}(p'/q')$ for some pattern \mbox{$P' = Q' \cup U'$} and some slopes $p/q,p'/q'$, then $P'$ has winding number $w'=0$.  
    
    If $P'=C_{r,s}(\hat{P})$ for some pattern $\hat{P} = \hat{Q} \cup \hat{U}$ and $|p'-q'rs|=1$, then $\hat{P}$ has winding number $\hat{w}=0$. 
    \begin{proof} 
        Any homeomorphism should take nullhomologous curves to nullhomologous curves. When $w=0$, the nullhomologous curve is precisely $\lambda_U$ by Lemma \ref{lemma:gordon}. A slope-preserving homeomorphism should take $\lambda_U$ to $\lambda_{U'}$. Again, by analysing the cases in Lemma \ref{lemma:gordon}, this forces $w'=0$.  
        
        If $P'=C_{r,s}(\hat{P})$ and $|p'-q'rs|=1$, then the cable space compresses and we have a slope-preserving homeomorphism $\mathbb{S}^3_P(p/q) \cong \mathbb{S}^3_{\hat{P}}(p'/q's^2)$. We conclude by the same argument that $\hat{w}=0$. 
    \end{proof} 
\end{lemma} 

We now set $p'/q'=p/q$ and use the Dehn surgery characterisation of the unknot $U$, trefoils $\pm T$, figure eight knot $S$ and $\pm 5_2$ knots $\pm R$ to gain information about any pattern which produces one of these knots when applied to the unknot. 

\begin{lemma} 
\label{lemma:knotting} 
    Let $P = Q \cup U$ be a pattern with $Q \in\{U,\pm T,S,\pm R\}$ in $\mathbb{S}^3$. 
    
    If there exists a slope-preserving homeomorphism $\mathbb{S}^3_P(p/q)\cong\mathbb{S}^3_{P'}(p/q)$ for some pattern $P' = Q' \cup U'$ and some non-integral slope $p/q$, then $Q=Q'$ in $\mathbb{S}^3$.  
    
    If $P'=C_{r,s}(\hat{P})$ for some pattern $\hat{P} = \hat{Q} \cup \hat{U}$ and $|p-qrs|=1$, then $Q\in\{U,\pm T$\} and $\hat{Q}=U$ in $\mathbb{S}^3$. 
    \begin{proof} 
        Apply both patterns $P$ and $P'$ to the unknot by gluing a solid torus to the exterior of each filled pattern piece in the standard way. This yields a homeomorphism $\mathbb{S}^3_Q(p/q) \cong \mathbb{S}^3_{Q'}(p/q)$. However, we know that every slope $p/q$ is characterising for $Q$ -- by \cite{kmos} for $U$, by \cite{os} for $\pm T$ and $S$ and by \cite{bs} for $\pm R$ (provided $|q|\geq2$). Therefore $Q'$ must be isotopic to the same knot in $\mathbb{S}^3$. 
        
        If $P'=C_{r,s}(\hat{P})$, then $Q = Q' = C_{r,s}(\hat{Q})$ in $\mathbb{S}^3$. The only way for this to hold when $Q=U$ is if $\hat{Q}=U$ and $|r|=1$. When $Q=\pm T$, we can have $\hat{Q}=U$ and $(r,s)\in\{(\pm2,3),(\pm3,2)\}$. We cannot have $Q=S$, nor $Q=\pm R$, because there is no way to express these hyperbolic knots as non-trivial cables. 
    \end{proof} 
\end{lemma}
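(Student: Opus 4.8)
The plan is to reduce the statement about patterns to the Dehn surgery characterisation results for the unknot $U$, the trefoils $\pm T$, the figure eight knot $S$ and the $5_2$ knots $\pm R$, and then to use the obstruction to expressing these knots as non-trivial cables. First I would observe that a pattern $P = U \cup Q$ applied to the unknot $U$ produces the knot $Q$: concretely, if one glues a solid torus $\mathbb{S}^1 \times \mathbb{D}^2$ back to the exterior $\mathbb{S}^3_U$ of the pattern's unknotted component $U$ in the standard (meridian-to-meridian) way, one recovers $\mathbb{S}^3$ with $Q$ sitting inside as $Q \subset \mathbb{S}^3$. Crucially, this extra Dehn filling is compatible with the slope-preserving homeomorphism $\mathbb{S}^3_P(p/q) \cong \mathbb{S}^3_{P'}(p/q)$: since the homeomorphism identifies the meridian and longitude on the boundary torus coming from $Q$ (respectively $Q'$), it also identifies the boundary tori coming from $U$ and $U'$, so gluing in the solid torus on both sides consistently yields an orientation-preserving homeomorphism $\mathbb{S}^3_Q(p/q) \cong \mathbb{S}^3_{Q'}(p/q)$ of the knot complements of $Q$ and $Q'$ after $p/q$-surgery.

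Next I would apply the relevant characterising slope results: every slope $p/q$ is characterising for $U$ by \cite{kmos}, for $\pm T$ and $S$ by \cite{os}, and every non-integral slope $p/q$ (i.e.\ with $|q| \geq 2$) is characterising for $\pm R$ by \cite{bs}. Since $p/q$ is assumed non-integral, in all five cases the homeomorphism $\mathbb{S}^3_Q(p/q) \cong \mathbb{S}^3_{Q'}(p/q)$ forces $Q' = Q$ in $\mathbb{S}^3$, which is the first assertion. For the second assertion, suppose $P' = J_{r,s}(\hat{P})$ is the $(r,s)$-cable of a pattern $\hat{P} = \hat{U} \cup \hat{Q}$; then applying $P'$ to the unknot produces the knot $Q' = J_{r,s}(\hat{Q})$, the $(r,s)$-cable of $\hat{Q} \subset \mathbb{S}^3$ (here $\hat{Q}$ plays the role of the companion). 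Combining with the first part, $Q = J_{r,s}(\hat{Q})$ in $\mathbb{S}^3$, so $Q$ is realised as a non-trivial cable. Now I would simply enumerate: $S$ and $\pm R$ are hyperbolic knots, and a cable knot $J_{r,s}(\hat{Q})$ with $|s| > 1$ is never hyperbolic (its complement contains an essential cabling torus), so $Q \neq S$ and $Q \neq \pm R$. This leaves $Q \in \{U, \pm T\}$. If $Q = U$, the only way the unknot is an $(r,s)$-cable in our convention (recall $|s| > 1$) is $\hat{Q} = U$ with $|r| = 1$. If $Q = \pm T = T_{\pm 2, 3} = T_{\pm 3, 2}$, the only cable descriptions are $\hat{Q} = U$ with $(r,s) \in \{(\pm 2, 3), (\pm 3, 2)\}$ (viewing the trefoil as a torus knot, i.e.\ a cable of the unknot). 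In every surviving case $\hat{Q} = U$ in $\mathbb{S}^3$, which is what we wanted.

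The main obstacle I anticipate is being careful about conventions and orientations rather than anything deep: one must check that "applying the pattern to the unknot" really is the operation of Dehn filling the $U$-component along its meridian, that this commutes with the given $p/q$-filling of the $Q$-component, and that slope-preservation of $\mathbb{S}^3_P(p/q) \cong \mathbb{S}^3_{P'}(p/q)$ genuinely guarantees the $U$- and $U'$-boundary tori are matched up compatibly (this uses that a slope-preserving homeomorphism of filled pattern pieces identifies \emph{all} slopes on the distinguished boundary torus, hence in particular the meridian, so that the resulting closed-up manifolds are the honest surgered complements $\mathbb{S}^3_Q(p/q)$ and $\mathbb{S}^3_{Q'}(p/q)$). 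The cable enumeration at the end is elementary once one recalls that the only knots among $\{U, \pm T, S, \pm R\}$ admitting a non-trivial cable structure are $U$ and $\pm T$, and in those cases the inner knot $\hat{Q}$ is forced to be the unknot.
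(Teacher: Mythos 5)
Your proposal is correct and follows essentially the same route as the paper's proof: fill the $U$-component in the standard way to obtain $\mathbb{S}^3_Q(p/q)\cong\mathbb{S}^3_{Q'}(p/q)$, invoke the characterising-slope results of \cite{kmos}, \cite{os} and \cite{bs} to conclude $Q=Q'$, and then enumerate which knots in $\{U,\pm T,S,\pm R\}$ admit a cable description $J_{r,s}(\hat{Q})$ with $|s|>1$. Your extra care in checking that slope-preservation makes the meridional filling of the $U$- and $U'$-boundaries compatible is a welcome elaboration of a step the paper leaves implicit, but it is not a different argument.
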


\section{Minimal geodesics} 
\label{section:geodesics} 

The aim of this section is to show that if $L = L_0 \cup U^{m-1}$ and $\tilde{L} = \tilde{L}_0 \cup \tilde{U}^{m-1}$, then the existence of a slope-preserving homeomorphism $\mathbb{S}^3_L(p/q) \cong \mathbb{S}^3_{\tilde{L}}(\tilde{p}/\tilde{q})$ implies that $L_0=\tilde{L}_0$. We can do this by analysing shortest geodesics before and after Dehn filling. Setting $(L_0, \tilde{L}_0)$ to be $(K,\tilde{K})$ when $m=1$ and $(Q,\tilde{Q})$ when $m=2$, this will enable us to complete the proofs of Theorems \ref{theorem:new} and \ref{theorem:main}. 

\begin{theorem:new}
    Let $K$ be a hyperbolic knot. 

    Then every slope $p/q$ with $|q|\geq \max\{35, \mathfrak{q}(\mathbb{S}^3_K)\}$ is characterising for $K$. 
\end{theorem:new}

\begin{theorem:main}
    Let $K=P(J)$ be a satellite of a companion $J$ by a hyperbolic pattern $P = Q \cup U$ with winding number $w$.  

    Then every slope $p/q$ with $\gcd(p,w)\neq1$ and $|q|\geq \max\{35,\mathfrak{q}(\mathbb{S}^3_P)\}$ is characterising for $K$. 
\end{theorem:main} 

The argument goes as follows. We know that the slope-preserving homeomorphism \mbox{$\mathbb{S}^3_L(p/q) \cong \mathbb{S}^3_{\tilde{L}}(\tilde{p}/\tilde{q})$} is homotopic to an isometry by Mostow rigidity, and therefore preserves shortest geodesics. Furthermore, if $|q| \geq \max\{35, \mathfrak{q}(\mathbb{S}^3_L)\}$, then the shortest geodesic in $\mathbb{S}^3_L(p/q)$ is the core curve $c_{L_0}$ of the filling. This is because the geometry of $\mathbb{S}^3_L(p/q)$ away from the core curve looks very similar to that of $\mathbb{S}^3_L$, in which all geodesics are much longer by our hypothesis. By considering the corresponding geodesics in $\mathbb{S}^3_{\tilde{L}}(\tilde{p}/\tilde{q})$, it once again transpires that the shortest geodesic can only be the core curve $c_{\tilde{L}_0}$ of the filling. We now have a slope-preserving homeomorphism which takes $L_0$ to $\tilde{L}_0$; it follows that $L=\tilde{L}$.

\subsection{Hyperbolic geometry} 

Let $M$ be a compact orientable hyperbolic 3-manifold with toroidal boundary. Recall that each component of $\partial M$ has a neighbourhood called a \emph{cusp} which looks like $N \cong \mathbb{T}^2 \times [1,\infty)$. Expanding $N$ to the largest size for which the torus $\partial N$ is embedded rather than immersed gives rise to a \emph{maximal cusp} $N$, whose boundary $\partial N$ inherits a well-defined Euclidean metric and is the standard place to measure lengths of slopes on $\partial M$. 
\pagebreak

\begin{definition} 
    Let $M$ be a compact orientable hyperbolic 3-manifold with toroidal boundary $\partial M$. Let $\gamma$ be a slope on a component of $\partial M$ and let $N$ be the corresponding maximal cusp. 
    \begin{itemize} 
        \item The \emph{area} $A(\partial N)$ of $\partial N \cong\mathbb{T}^2$ is the usual Euclidean area. 
        \item The \emph{length} $l(\gamma)$ of $\gamma$ is the Euclidean length of a geodesic representative on $\partial N$. 
        \item The \emph{normalised length} $\hat{l}(\gamma)$ of $\gamma$ is $\hat{l}(\gamma) := l(\gamma) / \sqrt{A(\partial N)}$. 
    \end{itemize} 
\end{definition} 

\begin{lemma}
\label{lemma:facts}
    Let $M$ be a compact orientable hyperbolic 3-manifold with toroidal boundary $\partial M$. Let $\gamma$ be a slope on a component of $\partial M$ and let $N$ be the corresponding maximal cusp. 
    \begin{itemize} 
        \item The area $A(\partial N)$ of $\partial N \cong\mathbb{T}^2$ satisfies $A(\partial N) \geq 2\sqrt{3}$. 
        \item The length $l(\gamma)$ of $\gamma$ satisfies $l(\gamma)l(\mu) \geq \Delta(\gamma, \mu) \cdot A(\partial N)$. 
        \item The normalised length $\hat{l}(\gamma)$ of $\gamma$ satisfies $\hat{l}(\gamma)\hat{l}(\mu) \geq \Delta(\gamma, \mu)$. 
    \end{itemize} 
    \begin{proof}
        The first inequality follows from the lower bound of $\sqrt{3}$ on cusp volume \cite[Theorem~1.2]{ghmty}, which is half of cusp area. The others follow from a simple geometric argument: see for instance \cite[Lemma~8.1]{lackenby-notes}. 
    \end{proof}
\end{lemma}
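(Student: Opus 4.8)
My plan is to take the three bullets in turn; the first is the only one requiring external firepower, and the other two are short flat-torus computations. For the area bound I would work in the upper half-space model and realise the maximal cusp $N$ as $H_c/\Gamma_\infty$, where $H_c = \{z \geq c\}$ is a horoball and $\Gamma_\infty \leq \Gamma$ is the rank-two parabolic subgroup fixing $\infty$, acting by Euclidean translations of a fundamental parallelogram of Euclidean area $a$. Integrating the hyperbolic volume form $z^{-3}\,dx\,dy\,dz$ over $H_c/\Gamma_\infty$ gives $\mathrm{Vol}(N) = a\int_c^\infty z^{-3}\,dz = a/(2c^2)$, whereas the metric induced on the horospherical torus $\{z=c\}$ is $c^{-2}$ times the Euclidean metric, so $A(\partial N) = a/c^2 = 2\,\mathrm{Vol}(N)$. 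Feeding in the cusp-volume bound $\mathrm{Vol}(N) \geq \sqrt{3}$ of \cite[Theorem~1.2]{ghmty} then yields $A(\partial N) \geq 2\sqrt{3}$.

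For the length-product bound I would use that $\partial N$ is a flat torus, identifying it with $\mathbb{R}^2/\Lambda$ for a lattice $\Lambda$ of covolume $A(\partial N)$, and letting $v, w \in \Lambda$ be the primitive vectors representing $\gamma$ and $\mu$; the geodesic representatives of these slopes lift to the lines $\mathbb{R}v$ and $\mathbb{R}w$, so $l(\gamma) = |v|$ and $l(\mu) = |w|$. The sublattice $\mathbb{Z}v + \mathbb{Z}w$ has covolume $|v \wedge w|$ and hence index $|v \wedge w|/A(\partial N)$ in $\Lambda$, and for primitive classes this index is exactly the geometric intersection number $\Delta(\gamma,\mu)$ (equivalently the absolute value of the algebraic one), so $|v \wedge w| = \Delta(\gamma,\mu)\cdot A(\partial N)$. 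Combining this with the elementary estimate $|v \wedge w| = |v|\,|w|\,|\sin\theta| \leq |v|\,|w|$ gives $l(\gamma)\,l(\mu) \geq \Delta(\gamma,\mu)\cdot A(\partial N)$. The normalised bound then drops out by dividing this through by $A(\partial N)$ and recalling that $\hat{l}(\cdot) = l(\cdot)/\sqrt{A(\partial N)}$, which immediately gives $\hat{l}(\gamma)\,\hat{l}(\mu) \geq \Delta(\gamma,\mu)$.

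The main (and really the only) obstacle is that the first bullet genuinely relies on the cusp-volume inequality $\mathrm{Vol}(N) \geq \sqrt{3}$, which I would quote wholesale from \cite{ghmty}; establishing it is well outside the scope here. Everything else is bookkeeping. The one routine point worth stating carefully is that the metric induced on $\partial N$ is Euclidean up to rescaling, so that slopes have honestly straight geodesic representatives and the lattice computation above applies verbatim; this is immediate from the explicit form of the metric on a horosphere in the half-space model.
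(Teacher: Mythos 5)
Your proposal is correct and follows the same route as the paper: the area bound comes from the relation $A(\partial N)=2\operatorname{Vol}(N)$ combined with the cusp-volume bound $\operatorname{Vol}(N)\geq\sqrt{3}$ of \cite{ghmty}, and the two length inequalities are exactly the "simple geometric argument" (the flat-torus lattice computation) that the paper leaves to the reader. You have merely written out the details the paper omits; there is nothing to correct.
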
 

When $M$ is a link complement, we know that the canonical choice of meridian $\mu$ and longitude $\lambda$ on each component of $\partial M$ allows us to identify $\gamma=p\mu+q\lambda$ with $p/q \in \mathbb{Q}\cup\{\infty\}$. Recall that we have a notion of \emph{distance} between a pair of slopes $p/q$ and $r/s$, which corresponds to their geometric intersection number: $\Delta(p/q,r/s):=|ps-qr|$. In particular, substituting $\Delta(p/q,1/0)=|q|$ into Lemma \ref{lemma:facts} yields an explicit relationship between the length and denominator of the slope $p/q$. 

When a slope is sufficiently long, we can fill along it and maintain the hyperbolic structure on $M$. 

\begin{theorem}[6-theorem \cite{agol-6, lackenby-6}] 
\label{theorem:6}
    Let $M$ be a compact orientable hyperbolic 3-manifold with toroidal boundary $\partial M$. Let $\gamma$ be a slope on a component of $\partial M$ with length $l(\gamma)>6$. 

    Then $M(\gamma)$ is also hyperbolic. 
\end{theorem}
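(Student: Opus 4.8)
The plan is to reduce the statement, by way of Thurston's Hyperbolisation theorem (Theorem~\ref{theorem:hyperbolisation}) together with standard $3$-manifold topology, to a quantitative statement about essential surfaces in $M$, and then to prove that statement by an area argument of the kind pioneered by Agol and Lackenby. Write $M(\gamma) = M \cup_{T} V$, where $T$ is the boundary torus of the maximal cusp along which we fill and $V$ is the attached solid torus whose meridian is identified with $\gamma$. By geometrisation of the prime, torus and Seifert pieces, a hyperbolic structure on $M(\gamma)$ exists as soon as $M(\gamma)$ contains no essential sphere, disc, annulus or torus, has infinite $\pi_1$, and is not Seifert fibred. So I would assume for contradiction that such a low-complexity essential surface $S$ exists.

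The next step is to drill the core of the filling: removing an open tubular neighbourhood $\nu(c_\gamma)$ of the core $c_\gamma$ of $V$ recovers $M$, now viewed as a cusped hyperbolic manifold whose cusp at $\partial\nu(c_\gamma)$ carries the slope $\gamma$ of length $l(\gamma)>6$ on its maximal horotorus. Put $S$ in general position so that $S \cap \nu(c_\gamma)$ is a minimal family of $n$ meridian discs; then $S_0 := S \cap M$ is properly embedded in $M$ with $\partial S_0$ a union of $n$ parallel copies of $\gamma$ on this cusp torus. A standard clean-up --- innermost discs and outermost arcs, using that $M$, being hyperbolic, is irreducible, $\partial$-irreducible, atoroidal and anannular --- lets one take $S_0$ incompressible and $\partial$-incompressible with $\chi(S_0)<0$ and $n$ not too small. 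Realising $S_0$ as a pleated surface running vertically up each cusp, Gauss--Bonnet gives $\mathrm{Area}(S_0) = 2\pi|\chi(S_0)|$, which is linear in $n$, whereas the part of $S_0$ in the maximal cusp over the $n$ copies of $\gamma$ is a union of half-infinite vertical annuli and so has area exactly $n\,l(\gamma)$.

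Comparing, $n\,l(\gamma) \le \mathrm{Area}(S_0) = 2\pi|\chi(S_0)|$, and since $|\chi(S_0)|$ grows like $n$ this already forces $l(\gamma) \le 2\pi$ --- the classical $2\pi$-theorem bound. This is where the real work lies: driving the constant down to the sharp value $6 < 2\pi$ is the content of the theorems of Agol and of Lackenby. Concretely I would follow Lackenby's combinatorial route, replacing hyperbolic area by a combinatorial area with respect to an angled cell decomposition in which $T$ is efficiently triangulated relative to $\gamma$; a combinatorial Gauss--Bonnet identity then pits $|\chi(S_0)|$ against a quantity governed by how $\gamma$ meets the triangulation of the flat torus $T$, and optimising the resulting Euclidean estimate produces exactly the bound $6$. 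The same argument simultaneously shows $\pi_1 M(\gamma)$ is word-hyperbolic, which secures the infinite-$\pi_1$ requirement and disposes of the residual Seifert fibred fillings and spherical space forms that carry no essential surface. The main obstacle is this sharp constant --- the naive argument only yields $2\pi$, and extracting $6$ demands careful combinatorial-area bookkeeping on the cusp torus --- while a secondary nuisance is the clean-up step, namely ensuring that cutting $S$ along the meridian discs leaves an incompressible, $\partial$-incompressible remnant in every degenerate low-complexity case (for instance $n$ small, or $S$ a boundary-parallel annulus or torus).
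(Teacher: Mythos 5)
This is one of the results the paper imports as a black box --- Theorem \ref{theorem:6} is stated with a citation to Agol and Lackenby and no proof is attempted in the text --- so there is no ``paper's proof'' to compare against; what can be assessed is whether your sketch would actually establish the theorem. The architecture you describe is the right one: reduce via geometrisation to excluding essential spheres, discs, annuli and tori (and Seifert fibred or finite-$\pi_1$ fillings), drill back to $M$, cut the putative essential surface along meridian discs of the filling solid torus to get a properly embedded $S_0$ with $n$ boundary curves of slope $\gamma$, pleat it, and compare $\mathrm{Area}(S_0)=2\pi|\chi(S_0)|$ with the area forced into the maximal cusp. Two small corrections there: the cusp contribution is \emph{at least} $n\,l(\gamma)$, not exactly (the horocyclic boundaries of the surface cusps on the maximal horotorus are only bounded below by $l(\gamma)$, and $S_0$ may meet the cusp in further components); and for a one-cusped $M$ the filling is closed, so the reduction needs the closed geometrisation theorem rather than Theorem \ref{theorem:hyperbolisation} as stated, which is for manifolds with toroidal boundary.

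The genuine gap is exactly where you locate it yourself: everything you actually carry out yields only $l(\gamma)\leq 2\pi$, and the passage from $2\pi$ to $6$ --- which is the entire content of the theorem beyond the classical $2\pi$-theorem, and is precisely the constant this paper needs in Lemmas \ref{lemma:normalised} and \ref{lemma:bound} --- is asserted rather than derived. Saying that ``optimising the resulting Euclidean estimate produces exactly the bound $6$'' is not an argument; the improvement in Agol's and Lackenby's proofs comes from a substantially sharper lower bound on the area that the (pleated or combinatorially normalised) surface must accumulate inside the maximal cusp, exploiting that the $n$ boundary curves are embedded parallel copies of $\gamma$ on the horotorus, and extracting the constant $6$ from that refinement is delicate. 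I would also caution that attributing the general case to ``an angled cell decomposition in which $T$ is efficiently triangulated'' conflates two strands of Lackenby's paper: the angled-triangulation machinery applies to manifolds admitting such structures, while the $6$-theorem for an arbitrary cusped hyperbolic $M$ rests on the metric estimate in the maximal horoball neighbourhood. As it stands your proposal is a faithful outline of the known strategy with an honest signpost at the hard step, but it is not a proof of the statement; for the purposes of this paper, citing \cite{agol-6, lackenby-6} as the authors do is the appropriate treatment.
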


\subsection{Quantitative comparison} 

In order to formulate a quantitative argument, we will employ some results from \cite{fps} regarding lengths of geodesics. 

The first result allows us to compare the length of a sufficiently short geodesic $c \subset M$ with the normalised length of its meridian $\mu$ viewed in the complement $M_c$. 

\begin{theorem}[{\cite[Corollary~6.13]{fps}}]
\label{theorem:corollary6.13}
    Let $M$ be a complete hyperbolic 3-manifold with finite volume and let $c \subset M$ be a closed geodesic with complement $M_c$ and meridian $\mu$. Suppose that one or both of the following hypotheses is satisfied: 
    \begin{itemize} 
        \item $\hat{l}(\mu) \geq 7.823$ with respect to the complete hyperbolic structure on $M_c$; 
        \item $l(c) \leq 0.0996$ with respect to the complete hyperbolic structure on $M$. 
    \end{itemize} 
    
    Then these quantities satisfy the inequality $$\frac{2\pi}{\hat{l}(\mu)^2 + 16.17} < l(c) < \frac{2\pi}{\hat{l}(\mu)^2 - 28.78}.$$
\end{theorem}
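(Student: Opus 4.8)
The plan is to read this as a statement about how a Dehn filling degenerates as its core geodesic becomes short, and to prove it by the cone-manifold deformation machinery of Hodgson--Kerckhoff made quantitative. The essential viewpoint is that $M$ is the Dehn filling of the drilled manifold $M_c$ along the meridian $\mu$, with $c$ as the core of the filling solid torus. In this language the asserted bounds are effective refinements of the Neumann--Zagier asymptotic $l(c) \approx 2\pi/\hat{l}(\mu)^2$, and the denominator corrections $16.17$ and $28.78$ are exactly the quantities measuring the departure from this leading behaviour.

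First I would interpolate between the two structures by a one-parameter family of cone manifolds $M_t$ on the underlying filled manifold, singular along $c$ with cone angle $t \in (0, 2\pi]$: at $t = 2\pi$ this is the smooth structure on $M$, while as $t \to 0$ it limits to the complete cusped structure on $M_c$. Local rigidity (Hodgson--Kerckhoff) guarantees the family exists and varies smoothly in $t$ over the range in which a Margulis-type tube about the singular locus persists. Along the family I would track the complex length $\mathcal{L}(t) = l(t) + i\theta(t)$ of the singular geodesic and the normalised length of the meridian $\mu$ measured on the cusp of the $t = 0$ structure, so that $l(c) = \mathrm{Re}\,\mathcal{L}(2\pi)$ is what I am ultimately trying to bound.

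Next I would use the Schl\"afli-type derivative formula of Hodgson--Kerckhoff for the variation of $\mathcal{L}(t)$ in the cone angle, whose leading term reproduces $l(c) \sim 2\pi/\hat{l}(\mu)^2$, together with their analysis of the harmonic cone-deformation form that governs the correction. The two hypotheses $\hat{l}(\mu) \geq 7.823$ on $M_c$ and $l(c) \leq 0.0996$ on $M$ are present precisely to force a definite lower bound on the tube radius about $c$ throughout the deformation; each is an independent sufficient threshold, and a bootstrapping argument (using the estimate being proved) shows the two are effectively interchangeable. With a definite tube secured, I would bound the $L^2$-norm of the harmonic deformation form, convert this into an explicit error bound for the derivative formula, and integrate the resulting two-sided differential inequality from $t = 0$ to $t = 2\pi$ to trap $l(c)$ between the two stated expressions.

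The main obstacle is entirely in making these estimates effective: the qualitative asymptotic is classical, but extracting the explicit constants $16.17$ and $28.78$ demands quantitative control of the tube radius, sharp $L^2$ bounds on the Hodgson--Kerckhoff harmonic form, and careful tracking of every constant through the cone-angle integration. This is exactly the technical content of \cite[Corollary~6.13]{fps}, which I would cite directly; the sketch above records the deformation-theoretic route by which those authors obtain it.
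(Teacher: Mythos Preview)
The paper does not prove this statement at all: it is quoted verbatim as \cite[Corollary~6.13]{fps} and used as a black box. Your proposal correctly identifies this and ultimately cites the same source; the cone-deformation sketch you give is a faithful outline of how Futer--Purcell--Schleimer obtain the result, but none of that content appears in the present paper, so for the purposes of comparison the two approaches agree.
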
 

In particular, let $L=L_0 \cup U^{m-1}$ and set $M=\mathbb{S}^3_L(p/q)$ to be the result of Dehn surgery along the knot $L_0 \subset\mathbb{S}^3_{U^{m-1}}$. If we take our geodesic to be the core curve $c=c_{L_0} \subset \mathbb{S}^3_L(p/q)$ of the filling, then the meridian $\mu$ of $c$ viewed inside the manifold $M_c = \mathbb{S}^3_L(p/q)\setminus\nu(c) = \mathbb{S}^3_L$ corresponds to the surgery slope $p/q$ for $L_0$ which bounds a disc in $\mathbb{S}^3_L(p/q)$. This gives us a way to sandwich the length $l(c)$ of the core curve between upper and lower bounds which depend only on $\hat{l}(\mu)=\hat{l}(p/q)$. 

The next result allows us to analyse multiple short geodesics appearing simultaneously. 

\begin{theorem}[{\cite[Corollary~7.20]{fps}}]
\label{theorem:corollary7.20} 
    Let $M$ be a complete hyperbolic 3-manifold with finite volume and let $c \cup \gamma \subset M$ be a geodesic link in $M$. Suppose that $\max\{l(c),l(\gamma)\}\leq0.0735$.
    
    Then $\gamma$ is isotopic to a geodesic in the complete metric on the complement $M_c$ with length at most $1.9793 \cdot l(\gamma)$. 
\end{theorem}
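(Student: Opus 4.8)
The plan is to view the drilled manifold $M_c$ and the original manifold $M$ as the two ends of a one-parameter family of hyperbolic cone structures on the pair $(M,c)$, with the cone angle along $c$ running from $0$ --- so that $c$ opens up into a rank-two cusp and we recover the complete structure on $M_c$ --- to $2\pi$, where the cone singularity disappears and we recover $M$, and then to follow the geodesic $\gamma$ along this deformation. The first task is to guarantee that this deformation exists and never degenerates. By Thurston's hyperbolic Dehn filling theory \cite{thurston} the manifold $M_c$ is hyperbolic, and since $l(c) \le 0.0735 < 0.0996$, applying Theorem~\ref{theorem:corollary6.13} to $c \subset M$ forces the normalised length $\hat{l}(\mu)$ of the meridian $\mu$ of $c$, measured in $M_c$, to exceed $7.823$; this is exactly the sort of normalised-length lower bound under which the local rigidity of Hodgson--Kerckhoff, together with its globalisations by Bromberg and by \cite{fps}, produces the cone deformation with explicit quantitative control and no collapse of the tube about the cone locus.

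The heart of the argument is the resulting effective drilling estimate. Away from a tube $V$ about $c$ the metric changes by a controlled amount along the whole deformation, so the two endpoints are compared by a diffeomorphism $\Phi$ from $M \setminus V$ onto $M_c$ minus a cusp neighbourhood whose distortion of arclength is at most the factor $1.9793$; here the numerical values $0.0735$ and $1.9793$ are what one obtains after optimising the pointwise decay and $L^2$ estimates for the harmonic representative of the deformation cocycle against the lower bound on the tube radius of $c$ coming from $l(c)$. To feed $\gamma$ into this I must know that $\gamma$ is disjoint from $V$: since $\max(l(c),l(\gamma)) \le 0.0735$ is below the Margulis constant of $\mathbb{H}^3$, both $c$ and $\gamma$ lie at the cores of embedded Margulis tubes, the Margulis tubes of the distinct closed geodesics $c$ and $\gamma$ are disjoint, and taking $V$ inside the Margulis tube of $c$ places $\gamma$ entirely in $M \setminus V$.

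Finally, throughout the cone deformation $\gamma$ stays in the thick part, disjoint from the deepening tube about the cone locus, so it remains a closed geodesic of each intermediate cone structure and deforms continuously to a closed geodesic $\gamma^{*}$ of the complete structure on $M_c$ (in particular $\gamma^{*}$ is primitive and non-peripheral, as a closed geodesic must be); this continuous family is the isotopy required by the statement, and the pointwise metric bounds integrate to give $l(\gamma^{*}) \le 1.9793 \cdot l(\gamma)$. The main obstacle is precisely this quantitative step: bounding the tube radius of $c$ from below in terms of $l(c)$, controlling the harmonic deformation field on the complement of that tube, and integrating the resulting pointwise distortion into a global estimate is the delicate analysis carried out in \cite{fps} (refining Hodgson--Kerckhoff and Bromberg), and verifying that the threshold $0.0735$ is exactly what closes the bookkeeping with constant $1.9793$ is the crux of the matter.
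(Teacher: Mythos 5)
The paper does not prove this statement: it is imported verbatim as \cite[Corollary~7.20]{fps}, so there is no internal proof to compare yours against. Your sketch is a fair description of the architecture of the argument in the cited source: deforming through hyperbolic cone metrics on $(M,c)$ with cone angle running between $2\pi$ and $0$, using the Hodgson--Kerckhoff/Bromberg harmonic deformation machinery made effective in \cite{fps}; the hypothesis $l(c)\leq 0.0735$ (together with the bound from Theorem \ref{theorem:corollary6.13}) guaranteeing that the deformation exists without degenerating; and the disjointness of the Margulis tubes of the distinct short geodesics $c$ and $\gamma$ keeping $\gamma$ in the region where the metric distortion is controlled. Those structural points are all correct, and $0.0735$ is indeed below Meyerhoff's lower bound for the Margulis constant of $\mathbb{H}^3$, so the tube-disjointness step is sound.

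However, as a proof the proposal is empty precisely where the theorem has content. The lower bound on the tube radius of $c$ in terms of $l(c)$, the pointwise and $L^2$ estimates on the harmonic representative of the deformation cocycle outside that tube, and the integration of these into the bi-Lipschitz constant $1.9793$ at the threshold $0.0735$ are all explicitly deferred to \cite{fps} --- that is, to the very result being proved, which makes the argument circular as a standalone proof. You also assert without justification that $\gamma$ persists as a closed geodesic of every intermediate cone structure and varies continuously with the cone angle; in \cite{fps} this persistence is itself part of the quantitative analysis (one must rule out $\gamma$ shrinking to a point or colliding with the cone locus along the deformation), not a free consequence of lying in the thick part at the two endpoints. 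Since the paper treats the statement as a black-box citation, using it that way is legitimate; but what you have written is a summary of where the proof lives, not a proof.
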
 

Intuitively, this means that if both $c = c_{L_0}$ and $\gamma$ are sufficiently short geodesics in $M=\mathbb{S}^3_L(p/q)$, then $\gamma$ must also have been very short in $M_{c}=\mathbb{S}^3_L$. Our strategy relies heavily on this result: we will show that if all geodesics in $\mathbb{S}^3_L$ were in fact very long, then it is impossible for any of them to have produced such a short geodesic $\gamma$ inside $\mathbb{S}^3_L$. This allows us to deduce that the core curve $c$ must be the unique shortest geodesic in $\mathbb{S}^3_L(p/q)$.

\subsection{Core curves correspond} 

Given our fixed hyperbolic link $L = L_0 \cup U^{m-1}$, we seek to demonstrate that taking \mbox{$|q| \geq \max\{35, \mathfrak{q}(\mathbb{S}^3_L)\}$} forces the shortest geodesic in $\mathbb{S}^3_L(p/q)$ to be the core curve $c = c_{L_0}$. We will do this by first ensuring that a certain condition on the normalised length $\hat{l}(p/q)$ of the slope is satisfied. This will then provide useful information about shortest geodesics. 

\begin{lemma} 
\label{lemma:normalised}
    Let $L = L_0 \cup U^{m-1}$ be a hyperbolic link and let $p/q$ be a slope on the boundary component of $\mathbb{S}^3_L$ corresponding to $L_0$ such that $|q| \geq q_0$. 
    
    Then the normalised length of the slope $p/q$ satisfies $\sqrt{6\sqrt{3}} \cdot \hat{l}(p/q) \geq q_0$. 
    \begin{proof} 
        By Lemma \ref{lemma:facts}, the normalised length $\hat{l}(p/q)$ of the slope $p/q$ satisfies the relationship 
        $$\hat{l}(p/q)\hat{l}(1/0) \geq \Delta(p/q,1/0) = |q|\geq q_0. $$  
        Since the trivial filling of $L_0 \subset \mathbb{S}^3_{U^{m-1}}$ produces something non-hyperbolic, Theorem \ref{theorem:6} tells us that $l(1/0) \leq 6$. We also have a universal lower bound $A(\partial N)\geq 2\sqrt{3}$ on cusp area from Lemma \ref{lemma:facts}, which implies that $\sqrt{2\sqrt{3}} \cdot \hat{l}(1/0) \leq 6$. Combining these inequalities gives $$\hat{l}(p/q) \geq \frac{|q|}{\hat{l}(1/0)} \geq \frac{\sqrt{2{\sqrt{3}}}}{6} \cdot |q| \geq \frac{q_0}{\sqrt{6\sqrt{3}}}$$ as required. 
        \qedhere 
    \end{proof} 
\end{lemma}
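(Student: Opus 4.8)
The plan is to reduce the claimed inequality $\sqrt{6\sqrt{3}}\cdot\hat{l}(p/q)\geq q_0$ to a chain of three standard facts: a lower bound on the distance $\Delta(p/q,1/0)=|q|$, an upper bound on the length of the meridian $l(1/0)$, and a lower bound on the cusp area $A(\partial N)$. First I would invoke the length–distance inequality from Lemma \ref{lemma:facts}, which gives $\hat{l}(p/q)\,\hat{l}(1/0)\geq\Delta(p/q,1/0)=|q|\geq q_0$. The point of working with \emph{normalised} lengths here is that this inequality is area-free, so it holds regardless of the particular cusp shape.

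Next I would bound $\hat{l}(1/0)$ from above. The key observation is that the meridional filling $\mathbb{S}^3_L(1/0)$ of the distinguished cusp returns $\mathbb{S}^3_{U_{m-1}}\cong\mathbb{S}^3$ (or, more precisely, the complement of an unlink), which is not hyperbolic; hence by the contrapositive of the 6-theorem (Theorem \ref{theorem:6}) we must have $l(1/0)\leq 6$. To convert this bound on actual length into a bound on normalised length I would use the universal lower bound $A(\partial N)\geq 2\sqrt{3}$ on cusp area from Lemma \ref{lemma:facts} (coming from the cusp-volume bound of \cite{ghmty}), which yields $\sqrt{2\sqrt 3}\cdot\hat{l}(1/0)=\sqrt{A(\partial N)}\cdot\hat{l}(1/0)\cdot\frac{\sqrt{2\sqrt3}}{\sqrt{A(\partial N)}}\leq l(1/0)\leq 6$, so $\hat{l}(1/0)\leq 6/\sqrt{2\sqrt3}$.

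Finally I would combine the two estimates: $\hat{l}(p/q)\geq|q|/\hat{l}(1/0)\geq (\sqrt{2\sqrt3}/6)\cdot|q|\geq (\sqrt{2\sqrt3}/6)\cdot q_0$, and since $6/\sqrt{2\sqrt3}=\sqrt{36/(2\sqrt3)}=\sqrt{18/\sqrt3}=\sqrt{6\sqrt3}$ this is exactly $\hat{l}(p/q)\geq q_0/\sqrt{6\sqrt3}$, i.e.\ the asserted inequality. There is no real obstacle here; the only thing to be careful about is the arithmetic identity $6/\sqrt{2\sqrt3}=\sqrt{6\sqrt3}$ (equivalently $36 = 6\sqrt3\cdot 2\sqrt3 = 12\cdot 3$), and the fact that the $6$-theorem applies to the \emph{maximal} cusp so that $l(1/0)$ and $A(\partial N)$ are measured on the same horotorus $\partial N$. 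The substantive content is entirely borrowed from Lemma \ref{lemma:facts} and Theorem \ref{theorem:6}; this lemma is just the bookkeeping that packages it into the quantity $\mathfrak{q}(\mathbb{S}^3_L)$ used later.
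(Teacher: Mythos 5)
Your proposal is correct and follows essentially the same route as the paper: the area-free inequality $\hat{l}(p/q)\hat{l}(1/0)\geq|q|$ from Lemma \ref{lemma:facts}, the bound $l(1/0)\leq6$ from the 6-theorem applied to the non-hyperbolic trivial filling, and the cusp-area bound $A(\partial N)\geq2\sqrt{3}$ to pass from $l(1/0)$ to $\hat{l}(1/0)$, combined with the identity $6/\sqrt{2\sqrt{3}}=\sqrt{6\sqrt{3}}$. Nothing is missing.
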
 

We can thus bound the length of the core curve $c = c_{L_0}$ in the filling $\mathbb{S}^3_L(p/q)$ in the following way. Note that $L'$ is just some hyperbolic link (possibly, but not necessarily, $L$ itself): for now, the quantity $\mathfrak{q}(\mathbb{S}^3_{L'})$ is used in a purely numerical manner. 

\begin{lemma}
\label{lemma:numerical}
    Let $L= L_0 \cup U^{m-1}$ be a hyperbolic link and let $p/q$ be a slope on the boundary component of $\mathbb{S}^3_L$ corresponding to $L_0$ such that $|q| \geq \max \{35, \mathfrak{q}(\mathbb{S}^3_{L'})\}$ for some hyperbolic link $L'$. 
    
    Then the core curve $c = c_{L_0}$ of $\mathbb{S}^3_L(p/q)$ is a geodesic of length $l(c) < \min\{0.0706, {1.9793}^{-1}\cdot\sys(\mathbb{S}^3_{L'})\}$. 
    
    Moreover, any other geodesic $\gamma$ in $\mathbb{S}^3_L(p/q)$ with length $l(\gamma)\leq l(c)$ must also be isotopic to a geodesic in the hyperbolic link complement $\mathbb{S}^3_L$ with length strictly less than $\min\{0.14, \sys(\mathbb{S}^3_{L'})\}$. 
    \begin{proof} 
        As $|q| \geq 25$, Lemma \ref{lemma:normalised} implies that $\hat{l}(p/q) \geq 7.5832$, so $c$ is a geodesic \cite[Theorem 5.17]{fps}.
        Since $|q|\geq26$, Lemma \ref{lemma:normalised} tells us that $\hat{l}(p/q) \geq 7.823$, so we can apply Theorem \ref{theorem:corollary6.13} and Lemma \ref{lemma:normalised}, respectively, to obtain $$l(c) < \frac{2\pi}{\hat{l}(p/q)^2 - 28.78} \leq \frac{2\pi}{\frac{|q|^2}{6\sqrt{3}} - 28.78} $$ and then use the hypothesis $|q|\geq\max\{35, \mathfrak{q}(\mathbb{S}^3_{L'})\}$ to reach the desired result. 
        
        Now suppose that $\gamma$ is another geodesic in $\mathbb{S}^3_L(p/q)$ with length $l(\gamma) \leq l(c)$. By Theorem \ref{theorem:corollary7.20}, we can deduce that $\gamma$ must have been a geodesic with respect to the complete metric on the complement $\mathbb{S}^3_L(p/q)\setminus\nu(c) \cong \mathbb{S}^3_L$ with length at most $1.9793\cdot l(\gamma) < \min\{0.14, \sys(\mathbb{S}^3_{L'})\}$, as required. 
    \end{proof}
\end{lemma}

The following result ensures that when $|q|,|\tilde{q}|\geq\max\{35, \mathfrak{q}(\mathbb{S}^3_L)\}$ (note that $L'=L$ now) and there is a slope-preserving homeomorphism $\mathbb{S}^3_L(p/q)\cong\mathbb{S}^3_{\tilde{L}}(\tilde{p}/\tilde{q})$, we can deduce that the shortest geodesics are precisely the core curves. Note that when $m=1$ and $\mathbb{S}^3_L(p/q)\cong\mathbb{S}^3_{\tilde{L}}(p/q)$ is a homeomorphism of closed manifolds, the term ``slope-preserving'' can be interpreted simply as ``orientation-preserving''. 

\begin{proposition}
\label{proposition:35} 
    Let $L = L_0 \cup U^{m-1}$, $\tilde{L} = \tilde{L}_0 \cup \tilde{U}^{m-1}$ be hyperbolic links and let $p/q, \tilde{p}/\tilde{q}$ be slopes on the boundary components of $\mathbb{S}^3_L, \mathbb{S}^3_{\tilde{L}}$ corresponding to $L_0, \tilde{L}_0$, respectively, such that $|q|,|\tilde{q}| \geq \max\{35, \mathfrak{q}(\mathbb{S}^3_L)\}$. 
    
    If there exists a slope-preserving homeomorphism $\mathbb{S}^3_L(p/q)\cong\mathbb{S}^3_{\tilde{L}}(\tilde{p}/\tilde{q})$, then $L_0=\tilde{L}_0$. 
    \begin{proof} 
        Since $|q|\geq\max\{35, \mathfrak{q}(\mathbb{S}^3_L)\}$, Lemma \ref{lemma:numerical} with $L'=L$ tells us that the core curve $c$ of $\mathbb{S}^3_L(p/q)$ is a geodesic of length $l(c) < \min\{0.0706, {1.9793}^{-1}\cdot\sys(\mathbb{S}^3_L)\}$. Moreover, any other geodesic $\gamma$ in $\mathbb{S}^3_L(p/q)$ which is shorter than $c$, or even just another geodesic with length $l(\gamma) < \min\{0.0706, {1.9793}^{-1}\cdot\sys(\mathbb{S}^3_L)\}$, must also be isotopic to a geodesic in $\mathbb{S}^3_L(p/q)\setminus\nu(c)\cong\mathbb{S}^3_L$, where its length is strictly less than $\min\{0.14, \sys(\mathbb{S}^3_L)\}$. However, all geodesics in $\mathbb{S}^3_L$ clearly have length at least $\sys(\mathbb{S}^3_L)$. Therefore the shortest geodesic in $\mathbb{S}^3_L(p/q)$ is precisely the core curve $c$. 
        
        Since $|\tilde{q}|\geq\max\{35, \mathfrak{q}(\mathbb{S}^3_L)\}$, Lemma \ref{lemma:numerical} applied to $\tilde{L}$ with $L'=L$ tells us that the core curve $\tilde{c}$ of $\mathbb{S}^3_{\tilde{L}}(\tilde{p}/\tilde{q})$ is a geodesic of length $l(\tilde{c}) < \min\{0.0706, {1.9793}^{-1}\cdot\sys(\mathbb{S}^3_L)\}$. By Mostow rigidity, the slope-preserving homeomorphism $\mathbb{S}^3_L(p/q) \cong \mathbb{S}^3_{\tilde{L}}(\tilde{p}/\tilde{q})$ is homotopic to an isometry, which preserves geodesics and their lengths. This means that the shortest geodesic in $\mathbb{S}^3_{\tilde{L}}(\tilde{p}/\tilde{q})$ must in fact be the core curve $\tilde{c}$, as any shorter geodesic $\tilde{\gamma}$ would have to correspond to another geodesic $\gamma$ in $\mathbb{S}^3_L(p/q)$ with $l(\gamma) < \min\{0.0706, {1.9793}^{-1}\cdot\sys(\mathbb{S}^3_L)\}$. Since the isometry preserves shortest geodesics, the core curves $c$ in $\mathbb{S}^3_L(p/q)$ and $\tilde{c}$ in $\mathbb{S}^3_{\tilde{L}}(\tilde{p}/\tilde{q})$ are identified. This gives the desired slope-preserving homeomorphism taking $L_0$ to $\tilde{L}_0$; it follows that $L=\tilde{L}$.
    \end{proof} 
\end{proposition}

\subsection{Proof of Theorem \ref{theorem:new}} 

Set $L_0=K$ to be our fixed hyperbolic knot and $\tilde{L}_0=\tilde{K}$ to be another hyperbolic knot for which there is an orientation-preserving homeomorphism $\mathbb{S}^3_K(p/q) \cong \mathbb{S}^3_{\tilde{K}}(\tilde{p}/\tilde{q})$. We will see that the assumption $|q|, |\tilde{q}| \geq \max\{35, \mathfrak{q}(\mathbb{S}^3_K)\}$ ensures that the shortest geodesics in both fillings are exactly the core curves, $c_K$ and $c_{\tilde{K}}$. This enables us to complete the proof of Theorem \ref{theorem:new}. 

\begin{proof}[Proof of Theorem \ref{theorem:new}] 
    Suppose that $\mathbb{S}^3_K(p/q)\cong\mathbb{S}^3_{K'}(p/q)$ for a hyperbolic knot $K$ and slope $p/q$ satisfying the hypotheses in the statement of the theorem. Recall from Proposition \ref{proposition:knot} that $K'$ is either (i) another hyperbolic knot or (ii) a cable $K'=C_{r,s}(\hat{K})$ of one. In case (ii), taking \mbox{$(\tilde{K},\tilde{p}/\tilde{q})=(\hat{K},p/qs^2)$} in Proposition \ref{proposition:35} tells us that $K'=C_{r,s}(K)$ and \mbox{$\mathbb{S}^3_K(p/q)\cong\mathbb{S}^3_{K'}(p/q)\cong\mathbb{S}^3_K(p/qs^2)$}, which can be ruled out by Corollary \ref{corollary:csc}. This leaves us with case (i) as the only possibility; taking $(\tilde{K},\tilde{p}/\tilde{q})=(K',p/q)$ in Proposition \ref{proposition:35} then gives $K=K'$, as required. 
\end{proof} 

\begin{proof}[Proof of Corollary \ref{corollary:new}] 
    For the $t$-twist knots $K=T^\pm_t$, the SnapPy length spectrum function returns the length $\Lambda^\pm_t$ of a shortest geodesic in the complement. These are listed in Table \ref{table:twist_lengths} for $|t|\leq34$. These values are clearly all greater than $0.14$ for $|t|\leq3$ (provided $T^{\pm}_t$ is hyperbolic) but less than $0.14$ for $4\leq|t|\leq34$ by inspection. 
    
    For $|t|\geq35$, we claim that the length of the shortest geodesic always satisfies $\Lambda^\pm_t < 0.14$ (and thus $\mathfrak{q}(\mathbb{S}^3_{T^\pm_t}) \geq 35$). Observe that $\mathbb{S}^3_{W^\pm}(1/t) \cong \mathbb{S}^3_{T^\pm_t}$ and note that the Whitehead link complement satisfies $$\sys(\mathbb{S}^3_{W^\pm}) \approx 1.061275061905 > 0.14.$$ By Lemma \ref{lemma:numerical}, if $|t|\geq \max\{35, \mathfrak{q}(\mathbb{S}^3_{W^\pm})\} = 35$, then the core curve $c_{V^\pm}$ of $\mathbb{S}^3_{W^\pm}(1/t)$ is a geodesic whose length satisfies $l(c_{V^\pm}) < \min\{0.0706, 1.9793^{-1} \cdot \sys(\mathbb{S}^3_{W^\pm})\} = 0.0706 < 0.14$, so $\Lambda^\pm_t < 0.14$.  
\end{proof} 

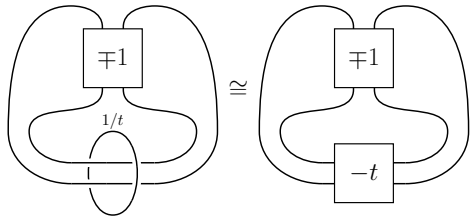
\begin{figure}[htbp!] 
    \centering 
    \resizebox{0.5\textwidth}{!}{\centering
\begin{tikzpicture} [squarednode/.style={rectangle, draw=black, fill=white, thick, minimum size=40pt}]
\begin{scope}[xshift=-3cm]
    \begin{knot}[end tolerance=1pt] 
    \flipcrossings{1,3}
        \strand[white, double=black, thick, double distance=1pt] (-1,-1.25) 
        to [out=left, in=down] (-2.5,0) 
        to [out=up, in=left] (-1,3)
        to [out=right, in=up] (-0.25,2.5)
        to [out=down, in=up] (-0.25,1) 
        to [out=down, in=up] (-2,0)
        to [out=down, in=left] (-1,-0.75); 
        \strand[white, double=black, thick, double distance=1pt] (-1,-0.75) to [out=right, in=left] (1,-0.75); 
        \strand[white, double=black, thick, double distance=1pt] (1,-0.75) 
        to [out=right, in=down] (2,0)
        to [out=up, in=down] (0.25,1)
        to [out=up, in=down] (0.25,2.5) 
        to [out=up, in=left] (1,3) 
        to [out=right, in=up] (2.5,0) 
        to [out=down, in=right] (1,-1.25); 
        \strand[white, double=black, thick, double distance=1pt] (1,-1.25) to [out=left, in=right] (-1,-1.25); 
        \strand[white, double=black, thick, double distance=1pt] (0,0) 
        to [out=right, in=right] (0,-2) 
        to [out=left, in=left] (0,0); 
        \node[] at (0,0.25) {\large $1/t$}; 
    \end{knot} 
    \node[squarednode, fill=white, opacity=1, draw=black] at (0,1.75) {\Large $\mp1$}; 
\end{scope} 
\node[] at (0,1) {\Large $\cong$}; 
\begin{scope}[xshift=3cm]
    \begin{knot}[end tolerance=1pt]
        \strand[white, double=black, thick, double distance=1pt] (-1,-1.25) 
        to [out=left, in=down] (-2.5,0) 
        to [out=up, in=left] (-1,3)
        to [out=right, in=up] (-0.25,2.5)
        to [out=down, in=up] (-0.25,1) 
        to [out=down, in=up] (-2,0)
        to [out=down, in=left] (-1,-0.75); 
        \strand[white, double=black, thick, double distance=1pt] (-1,-0.75) to [out=right, in=left] (1,-0.75); 
        \strand[white, double=black, thick, double distance=1pt] (1,-0.75) 
        to [out=right, in=down] (2,0)
        to [out=up, in=down] (0.25,1)
        to [out=up, in=down] (0.25,2.5) 
        to [out=up, in=left] (1,3) 
        to [out=right, in=up] (2.5,0) 
        to [out=down, in=right] (1,-1.25); 
        \strand[white, double=black, thick, double distance=1pt] (1,-1.25) to [out=left, in=right] (-1,-1.25); 
    \end{knot} 
    \node[squarednode, fill=white, opacity=1, draw=black] at (0,-1) {\Large $-t$}; 
    \node[squarednode, fill=white, opacity=1, draw=black] at (0,1.75) {\Large $\mp1$}; 
\end{scope}
\end{tikzpicture} } 
    \caption{Rolfsen $-t$-twist giving $\mathbb{S}^3_{W^\pm}(1/t) \cong \mathbb{S}^3_{T^\pm_t}$. } 
    \label{figure:1/t} 
\end{figure}

\subsection{Proof of Theorem \ref{theorem:main}} 

Now we will set $L=P$ and $\tilde{L}=\tilde{P}$ to be a pair of hyperbolic patterns, $P = Q \cup U$ and $\tilde{P} = \tilde{Q} \cup \tilde{U}$, for which there is a slope-preserving homeomorphism $\mathbb{S}^3_P(p/q) \cong \mathbb{S}^3_{\tilde{P}}(p/q)$. The assumption $|q|,|\tilde{q}| \geq \max\{35, \mathfrak{q}(\mathbb{S}^3_P)\}$ will imply that the shortest geodesics in both fillings are the core curves, $c_Q$ and $c_{\tilde{Q}}$. Putting everything together, we can complete the proof of Theorem \ref{theorem:main}. 
    
\begin{proof}[Proof of Theorem \ref{theorem:main}]
    Suppose that $\mathbb{S}^3_K(p/q)\cong\mathbb{S}^3_{K'}(p/q)$ for a satellite knot $K=P(J)$ and slope $p/q$ satisfying the hypotheses in the statement of the theorem. Recall from Proposition \ref{proposition:pattern} that, since we have $\gcd(p, w)\neq1$ and $|q|\geq3$, we know that $K'=P'(J)$ is also a satellite knot, by either (i) a hyperbolic pattern $P'$ or (ii) a cable $P'=C_{r,s}(\hat{P})$ of one. In case (ii), taking ($\tilde{P},\tilde{p}/\tilde{q})=(\hat{P},p/qs^2)$ in Proposition \ref{proposition:35} tells us that $K'=C_{r,s}(P(J))=C_{r,s}(K)$ and $\mathbb{S}^3_K(p/q)\cong\mathbb{S}^3_{K'}(p/q)\cong\mathbb{S}^3_K(p/qs^2)$, which can be ruled out by Corollary \ref{corollary:csc}. This leaves us with case (i) as the only possibility; taking $(\tilde{P},\tilde{p}/\tilde{q})=(P',p/q)$ in Proposition \ref{proposition:35} then gives $K=K'$. 
\end{proof} 

\begin{proof}[Proof of Corollary \ref{corollary:main}] 
    For the $n$-clasped $t$-twisted Whitehead links $P = W^n_t$, the SnapPy length spectrum function returns the length $\Lambda^{(n)}$ of a shortest geodesic in $\mathbb{S}^3_{W^n_t}$ (this is independent of $t$). These are listed for $|n|\leq34$ in Table \ref{table:double_lengths} and are clearly all greater than $0.14$ for $|n|\leq4$ but less than $0.14$ for $5\leq|n|\leq34$ by inspection. 
    
    For $|n|\geq35$, we need to show that the length of the shortest geodesic always satisfies $\Lambda^{(n)} < 0.14$. Letting $B$ denote the Borromean rings, observe that we have $\mathbb{S}^3_B(1/n) \cong \mathbb{S}^3_{W^n}$ and note that $\mathbb{S}^3_B$ satisfies $$\sys(\mathbb{S}^3_B) \approx 2.122550123810 > 0.14.$$ By Lemma \ref{lemma:numerical}, if $|n|\geq \max\{35, \mathfrak{q}(\mathbb{S}^3_B)\} = 35$, then the core curve $c$ of $\mathbb{S}^3_{B}(1/n)$ is a geodesic whose length satisfies $l(c) < \min\{0.0706, 1.9793^{-1} \cdot \sys(\mathbb{S}^3_B)\} = 0.0706 < 0.14$. Hence $\Lambda^{(n)} < 0.14$.  
\end{proof} 

\begin{figure}[htbp!] 
    \centering 
    \resizebox{0.5\textwidth}{!}{\centering
\begin{tikzpicture} [squarednode/.style={rectangle, draw=black, fill=white, thick, minimum size=40pt}]
\begin{scope}[xshift=-3cm]
    \begin{knot}[end tolerance=1pt] 
    \flipcrossings{2,3,5,7}
        \strand[white, double=black, thick, double distance=1pt] (-1,-1.25) 
        to [out=left, in=down] (-2.5,0) 
        to [out=up, in=left] (-1,3)
        to [out=right, in=up] (-0.25,2.5) 
        to [out=down, in=up] (-0.25,1); 
        \strand[white, double=black, thick, double distance=1pt] (-0.25,1) 
        to [out=down, in=up] (-2,0)
        to [out=down, in=left] (-1,-0.75); 
        \strand[white, double=black, thick, double distance=1pt] (-1,-0.75) to [out=right, in=left] (1,-0.75); 
        \strand[white, double=black, thick, double distance=1pt] (1,-0.75) 
        to [out=right, in=down] (2,0)
        to [out=up, in=down] (0.25,1); 
        \strand[white, double=black, thick, double distance=1pt] (0.25,1) 
        to [out=up, in=down] (0.25,2.5) 
        to [out=up, in=left] (1,3) 
        to [out=right, in=up] (2.5,0) 
        to [out=down, in=right] (1,-1.25); 
        \strand[white, double=black, thick, double distance=1pt] (1,-1.25) to [out=left, in=right] (-1,-1.25); 
        \strand[white, double=black, thick, double distance=1pt] (0,0) 
        to [out=right, in=right] (0,-2) 
        to [out=left, in=left] (0,0); 
        \strand[white, double=black, thick, double distance=1pt] (-1,2) 
        to [out=down, in=down] (1,2) 
        to [out=up, in=up] (-1,2); 
        \node[] at (0,3) {\large $1/n$}; 
    \end{knot} 
\end{scope} 
\node[] at (0,1) {\Large $\cong$}; 
\begin{scope}[xshift=3cm]
    \begin{knot}[end tolerance=1pt] 
    \flipcrossings{1,3} 
        \strand[white, double=black, thick, double distance=1pt] (-1,-1.25) 
        to [out=left, in=down] (-2.5,0) 
        to [out=up, in=left] (-1,3)
        to [out=right, in=up] (-0.25,2.5)
        to [out=down, in=up] (-0.25,1) 
        to [out=down, in=up] (-2,0)
        to [out=down, in=left] (-1,-0.75); 
        \strand[white, double=black, thick, double distance=1pt] (-1,-0.75) to [out=right, in=left] (1,-0.75); 
        \strand[white, double=black, thick, double distance=1pt] (1,-0.75) 
        to [out=right, in=down] (2,0)
        to [out=up, in=down] (0.25,1)
        to [out=up, in=down] (0.25,2.5) 
        to [out=up, in=left] (1,3) 
        to [out=right, in=up] (2.5,0) 
        to [out=down, in=right] (1,-1.25); 
        \strand[white, double=black, thick, double distance=1pt] (1,-1.25) to [out=left, in=right] (-1,-1.25); 
        \strand[white, double=black, thick, double distance=1pt] (0,0) 
        to [out=right, in=right] (0,-2) 
        to [out=left, in=left] (0,0); 
    \end{knot} 
    \node[squarednode, fill=white, opacity=1, draw=black] (B) at (0,1.75) {\Large $-n$}; 
\end{scope}
\end{tikzpicture} } 
    \caption{Rolfsen $-n$-twist giving $\mathbb{S}^3_B(1/n) \cong \mathbb{S}^3_{W^n}$. }
    \label{figure:1/n} 
\end{figure}
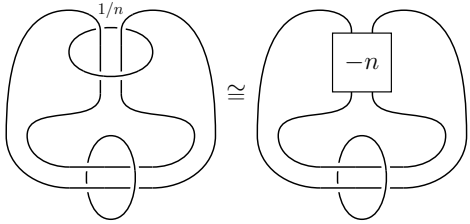

\section{Minimal volumes} 
\label{section:volume} 

We will now consider the special case where our fixed hyperbolic pattern is the $\pm1$-clasped $t$-twisted Whitehead link, $W^\pm_t = V^\pm_t \cup U$, for $\pm t \in \{-1,0,1,2\}$ as in Theorem \ref{theorem:refined}. To clarify this notation: we take $t \in \{-1,0,1,2\}$ in the positively clasped case and $t \in \{1,0,-1,-2\}$ in the negatively clasped case (just as we did for the selection of twist knots $T^\pm_t$ depicted in Figure \ref{figure:twist}). 

The Whitehead link has many significant properties. We know that $W^\pm_t$ has linking number $w=0$. The untwisted version, $W^\pm = V^\pm \cup U$, comprises two unknotted components which can be exchanged by an isotopy. Most importantly, the Whitehead link complement (jointly with its sister manifold, the $P(-2,3,8)$ pretzel link complement) achieves the least hyperbolic volume over all 2-cusped orientable hyperbolic 3-manifolds \cite{agol}. This leads to an alternative method of deducing that the existence of a slope-preserving homeomorphism $\mathbb{S}^3_{W^\pm_t}(p/q) \cong \mathbb{S}^3_{\tilde{P}}(\tilde{p}/\tilde{q})$ (where $\tilde{P}$ is a hyperbolic pattern) implies that $\tilde{P}=W^\pm_t$. This will form a crucial step in our upcoming proof of Theorem \ref{theorem:refined}. 

\begin{theorem:refined}
    Let $K=W^\pm_t(J)$ be a $\pm1$-clasped $t$-twisted Whitehead double for \mbox{$\pm t\in \{-1, 0, 1, 2\}$}. 
    
    Then every slope $p/q$ with $|p|\neq1$ and $|q|\geq q_{\min}$ is characterising for $K$, where $q_{\min}$ is some constant determined by the Whitehead link and independent of $J$. 
    
    Furthermore, if the SnapPy census of 2-cusped orientable hyperbolic 3-manifolds is complete up to stage $k$ of Table \ref{table:stages} (namely, up to the volume bound $V_k$, or up to the first $a_k$ items), then we can take $q_{\min}$ to be $q_k$. 
\end{theorem:refined} 

Our key ingredient will be an inequality that restricts the extent to which Dehn filling can decrease hyperbolic volume. This depends on a scaling factor which is determined by the length (and in turn by the denominator) of the filling slope. That is, for a given lower bound $q_{\min}$ on $|q|$, we can construct an upper bound $V_{\max}$ on the volume of $\mathbb{S}^3_{\tilde{P}}$ for which it is possible to have a slope-preserving homeomorphism $\mathbb{S}^3_{W^\pm_t}(p/q) \cong \mathbb{S}^3_{\tilde{P}}(\tilde{p}/\tilde{q})$. Under the additional requirement of completeness of the SnapPy census of 2-cusped orientable hyperbolic 3-manifolds up to the given $V_{\max}$, we can use the fact that $\tilde{P} = \tilde{Q} \cup \tilde{U}$ must have winding number $\tilde{w}=0$ to systematically rule out everything except $\mathbb{S}^3_{\tilde{P}}\cong\mathbb{S}^3_{W^\pm_t}$. Finally, since $\pm t\in\{-1,0,1,2\}$, we can deduce that $\tilde{P} = W^\pm_t$.

\subsection{Homeomorphic link complements}

Unlike knots, links are not necessarily determined by their complements. However, in the case of the Whitehead link, there are a limited number of possibilities. 

\begin{lemma} 
\label{lemma:twisted}
    Let $W^\pm_t = V^\pm_t \cup U$ be a $\pm1$-clasped $t$-twisted Whitehead link for some $t\in\mathbb{Z}$ and let $\tilde{P} = \tilde{Q} \cup \tilde{U}$ be a link such that there exists an orientation-preserving homeomorphism $\mathbb{S}^3_{W^\pm_t} \cong \mathbb{S}^3_{\tilde{P}}$. 

    Then $\tilde{P}$ is a $t'$-twisted Whitehead link with the same clasp, $\tilde{P}=W^\pm_{t'}$, for some integer $t' \in\mathbb{Z}$. 
    \begin{proof} 
        Since $W^\pm_t$ is Brunnian with linking number zero, $\tilde{P}$ must be obtained from $W^\pm_t$ by an integer number of full twists around an unknotted component \cite{brunnian}. Each clasp sign generates a distinct set of twisted Whitehead links with orientation-preserving homeomorphic complements (where objects in opposing sets are related by orientation-reversing homeomorphisms). 
    \end{proof}
\end{lemma}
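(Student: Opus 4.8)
The plan is to reduce to the untwisted case, identify $\tilde{P}$ up to twisting using the structure of Brunnian links, and then fix the clasp sign by chirality. First I would strip off the twist parameter: a full twist along the disc bounded by the unknot $C$ encircling the clasp region is realised by $\mp1$-surgery on $C$ (a Rolfsen twist), so it preserves $\mathbb{S}^3$ while changing the twist parameter of the link by one, and iterating this — as already recorded in Section~\ref{section:preliminaries}, where it is noted that $t$-twisting has no effect on the complement — gives $\mathbb{S}^3_{W^\pm_t}\cong\mathbb{S}^3_{W^\pm_0}$ for every $t\in\mathbb{Z}$. So it suffices to treat $t=0$. Write $M^\pm:=\mathbb{S}^3_{W^\pm_0}$, a $2$-cusped orientable hyperbolic $3$-manifold, and recall that $W^\pm_0=U\cup V^\pm_0$ is a nontrivial link, both of whose components are unknots, with linking number $0$, hence Brunnian. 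By Mostow rigidity the hypothesised orientation-preserving homeomorphism $\mathbb{S}^3_{\tilde{P}}\cong M^\pm$ is homotopic to an isometry.

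Next, the heart of the argument is to identify $\tilde{P}$. Since $W^\pm_0$ is Brunnian with linking number zero, \cite{brunnian} applies: $\tilde{P}$ is obtained from $W^\pm_0$ by an integral number of full twists along an unknotted circle bounding a disc that meets the link with algebraic intersection number zero. It then remains to determine which links can arise. Up to isotopy in $M^\pm=\mathbb{S}^3\setminus W^\pm_0$, I would argue that the only twisting circle that changes the link type is one isotopic to $C$: a twisting circle bounding a disc disjoint from $W^\pm_0$ acts trivially, while a twisting circle meeting $W^\pm_0$ in exactly two points of opposite sign (necessarily on a single component) either cancels those two intersections after an isotopy, hence is inessential, or is isotopic to the clasp circle $C$ — for a circle encircling two strands of $U$ one first applies the isotopy of $W^\pm_0$ exchanging its two unknotted components to reduce to the previous case. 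Twisting along $C$ by $t'\in\mathbb{Z}$ produces a $t'$-twisted Whitehead link with the same clasp by definition, so $\tilde{P}=W^\epsilon_{t'}$ for some $t'\in\mathbb{Z}$ and some clasp sign $\epsilon\in\{+,-\}$.

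Finally, I would check that $\epsilon$ matches. The complements $M^+$ and $M^-=\overline{M^+}$ are mirror images and are homeomorphic only via orientation-reversing homeomorphisms; equivalently, $M^\pm$ is chiral and the clasp sign is an orientation-sensitive invariant of the complement, which can be confirmed directly in SnapPea. Since the homeomorphism in the hypothesis is orientation-preserving, $\tilde{P}$ must carry the same clasp, so $\tilde{P}=W^\pm_{t'}$; translating back through the reduction to $t=0$, this holds for every original value of $t$, which is the claim.

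The step I expect to be the main obstacle is the twisting-circle analysis of the second paragraph: one must genuinely rule out twisting circles that would produce a link other than a twisted Whitehead link. A clean way to make this rigorous, bypassing a black-box appeal to \cite{brunnian}, is to argue directly from the (finite, explicitly known) isometry group of $M^\pm$ together with the classification of its solid-torus Dehn fillings. Reconstructing a link with complement $M^\pm$ amounts to choosing, on one cusp, a slope whose filling is a solid torus — this slope is then the meridian of $\tilde{Q}$ viewed as a knot in the solid torus $\mathbb{S}^3_U$ — together with the meridian of $U$ on the other cusp; one then shows that, up to the isometry group, these choices are forced, so the only links recovered are the $W^\pm_{t'}$. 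This uses the classification of exceptional fillings of the Whitehead link complement (or the equivalent SnapPea computation); the chirality verification is a secondary technical point.
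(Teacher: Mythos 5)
Your overall route---reduce to $t=0$, invoke the Brunnian-link result from \cite{brunnian} to constrain $\tilde{P}$, then pin down the clasp sign by orientation---is the same as the paper's, and your explicit reduction to $t=0$ is a welcome precision, since $W^\pm_t$ is literally Brunnian only when $V^\pm_t=T^\pm_t$ is unknotted. The genuine gap is in your second paragraph. You state the input from \cite{brunnian} as: $\tilde{P}$ is obtained by full twists along \emph{an unknotted circle bounding a disc meeting the link with algebraic intersection number zero}. The result the paper uses is narrower, and the difference matters: $\tilde{P}$ is obtained by full twists around an unknotted \emph{component} of the link (this is the Rolfsen-twist ambiguity in recovering a link from its complement, coming from regluing $\nu(L_i)$ by a different automorphism of the solid torus). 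Twisting along an arbitrary algebraically-trivial circle does \emph{not} preserve the link complement in general, and your attempted classification of such circles is where the argument breaks: the dichotomy ``either the two intersection points cancel or the circle is isotopic to the clasp circle $C$'' is false. A circle encircling the two strands entering the $-n$ clasp box of $W^{\pm}_0$ meets the link in two points of opposite sign on a single component, is not isotopic to your twist-region circle, and twisting along it changes the clasp parameter, producing a multiclasped link whose complement has strictly different hyperbolic volume; such circles are essential and are not disposed of by your case analysis, and circles meeting the link in four or more points are not addressed at all. (Your justification that $t$-twisting preserves the complement ``because it preserves $\mathbb{S}^3$'' has the same flaw in miniature: the correct reason is that the twisting circle is isotopic to the component $U$.)

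With the component-only version of the cited result your second paragraph becomes unnecessary: both components of $W^\pm_0$ are unknots exchanged by an isotopy, twisting around either one yields $W^\pm_{t'}$, and after a single nontrivial twist the $V$-component is knotted, so no further twisting is available; hence $\tilde{P}\in\{W^\pm_{t'}\}_{t'\in\mathbb{Z}}$ immediately. Your proposed fallback---classifying the pairs of filling slopes on the two cusps of $\mathbb{S}^3_{W^\pm}$ whose double filling is $\mathbb{S}^3$ (homology forces both slopes to have the form $1/n_i$, and $\mathbb{S}^3_{W^\pm}(1/n_1,1/n_2)\cong\mathbb{S}^3_{T^\pm_{n_1}}(1/n_2)$ is $\mathbb{S}^3$ only if $n_1=0$ or $n_2=0$, by the surgery characterisation of the unknot)---is a correct and essentially equivalent way of unpacking \cite{brunnian}, but you do not carry it out. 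Your final chirality step agrees with the paper's assertion; it can in fact be checked without SnapPea, since an orientation-preserving homeomorphism $\mathbb{S}^3_{W^+}\cong\mathbb{S}^3_{W^-}$ would, by the classification just obtained, force $W^-=W^+_0$ as links and hence $\mathbb{S}^3_{+T}\cong\mathbb{S}^3_{W^+}(1/1)\cong\mathbb{S}^3_{W^-}(1/1)\cong\mathbb{S}^3_{S}$, a contradiction.
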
 

We will later use the following result to analyse cables of twisted Whitehead doubles. 

\begin{lemma} 
\label{lemma:cabled_twisted}
    Let $W^\pm_t = V^\pm_t \cup U$ be a $\pm1$-clasped $t$-twisted Whitehead link for $\pm t\in\{-1,0,1,2\}$. If there exists a slope-preserving homeomorphism $\mathbb{S}^3_{W^\pm_t}(p/q)\cong\mathbb{S}^3_{P'}(p/q)$, where $P'=C_{r,s}(W^\pm_{t'})$ is a cable of some $W^\pm_{t'} = V^\pm_{t'} \cup U'$ and $p/q$ is a non-integral slope with $|p-qrs|=1$, then $t=t'=0$ and $|r|=1$. 
    \begin{proof}
        Since $V^\pm_t \in \{U,\pm T,S,\pm R\}$, we can apply Lemma \ref{lemma:knotting} to rule out $S$ and $\pm R$. In the case where $V^\pm_t=U$, we have $t=t'=0$ and $|r|=1$, as required. Alternatively, if $V^\pm_t=\pm T$, then we have $t=\pm1$, $t'=0$ and $(r,s)\in\{(\pm2,3),(\pm3,2)\}$. However, by the Rolfsen $\mp1$-twist trick shown in Figure \ref{figure:rolfsen}, this gives $$L(p,q) \cong \mathbb{S}^3_{T^\pm_0}(p/q) \cong \mathbb{S}^3_{V^\pm_{\pm1},U}(p/q,\mp1) \cong \mathbb{S}^3_{V^\pm_0,U}(p/qs^2,\mp1) \cong \mathbb{S}^3_{T^\pm_{\mp1}}(p/qs^2),$$ 
        yet the figure eight knot $T^\pm_{\mp1}=S$ has no lens space surgeries. 
    \end{proof} 
\end{lemma}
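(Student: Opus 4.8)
The plan is to reduce the statement almost entirely to Lemma \ref{lemma:knotting}, supplemented by a Rolfsen twist computation and the classical fact that the figure eight knot admits no lens space surgeries.

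First I would observe that, since $\pm t\in\{-1,0,1,2\}$, the pattern knot $V^\pm_t\subset\mathbb{S}^3$ lies in $\{U,\pm T,S,\pm R\}$ (with the sign matching the clasp), so Lemma \ref{lemma:knotting} applies to the given slope-preserving homeomorphism $\mathbb{S}^3_{W^\pm_t}(p/q)\cong\mathbb{S}^3_{P'}(p/q)$. Taking $\hat P=W^\pm_{t'}$ in $P'=J_{r,s}(W^\pm_{t'})$ and using $|p-qrs|=1$ with $p/q$ non-integral, the second conclusion of Lemma \ref{lemma:knotting} gives $V^\pm_t\in\{U,\pm T\}$ and $V^\pm_{t'}=U$ in $\mathbb{S}^3$; the latter forces $t'=0$, while the former rules out $\pm t\in\{-1,2\}$. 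The first conclusion of the same lemma gives $V^\pm_t=Q'=J_{r,s}(V^\pm_{t'})=J_{r,s}(U)=T_{r,s}$ in $\mathbb{S}^3$, so $T_{r,s}$ is isotopic either to $U$ or to $\pm T$. In the first case $t=0$, and since $|s|>1$ and $\gcd(r,s)=1$ the torus knot $T_{r,s}$ is unknotted only when $|r|=1$, which is exactly the desired conclusion.

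It remains to rule out the case $V^\pm_t=\pm T$, in which $t=\pm1$ and $(r,s)\in\{(\pm2,3),(\pm3,2)\}$. Since $|p-qrs|=1$, the cable space inside $\mathbb{S}^3_{P'}$ compresses during the $p/q$-filling (Proposition \ref{proposition:surgered}, cf.\ Proposition \ref{proposition:pattern}(ii)), so the homeomorphism refines to a slope-preserving homeomorphism $\mathbb{S}^3_{W^\pm_{\pm1}}(p/q)\cong\mathbb{S}^3_{W^\pm_0}(p/qs^2)$, preserving slopes on the boundary torus coming from the Whitehead component $U$. Filling that torus along the slope $\mp1$ on both sides is legitimate precisely because the homeomorphism is slope-preserving there; then the Rolfsen twist trick of Figure \ref{figure:1/t} --- in which $U$ encircles the $t$-twist box, so a $\mp1$ twist along $U$ sends $W^\pm_{\pm1}\mapsto W^\pm_0$ and $W^\pm_0\mapsto W^\pm_{\mp1}$ with the surgery slopes unchanged, the linking number being $0$ --- yields
$$L(p,q)\cong\mathbb{S}^3_{T^\pm_0}(p/q)\cong\mathbb{S}^3_{U,V^\pm_{\pm1}}(\mp1,p/q)\cong\mathbb{S}^3_{U,V^\pm_0}(\mp1,p/qs^2)\cong\mathbb{S}^3_{T^\pm_{\mp1}}(p/qs^2).$$
As $T^\pm_{\mp1}=S$ is the figure eight knot and $qs^2\neq0$, the right-hand side is a finite-slope surgery on $S$ yielding a lens space (possibly $\mathbb{S}^3$ or $\mathbb{S}^1\times\mathbb{S}^2$), contradicting the classification of exceptional surgeries on the figure eight knot. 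Hence this case is impossible, and we conclude $t=t'=0$ and $|r|=1$.

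The step I expect to be the main obstacle is the Rolfsen twist bookkeeping: one must check from the explicit diagram of $W^\pm_t$ that the Whitehead component really does encircle the $t$-twist box, so that $\mp1$-surgery along it interpolates between $W^\pm_{\pm1}$, $W^\pm_0$ and $W^\pm_{\mp1}$, and that the vanishing linking number ensures this twisting fixes the Seifert framings, hence the slopes $p/q$ and $p/qs^2$, so that the displayed chain of homeomorphisms is valid. The remaining ingredients --- the list of short twist knots and the surgery classification of the figure eight knot --- are standard.
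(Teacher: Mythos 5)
Your proof is correct and takes essentially the same route as the paper's: Lemma \ref{lemma:knotting} reduces matters to $V^\pm_t\in\{U,\pm T\}$ with $V^\pm_{t'}=U$ (whence $t'=0$, and $|r|=1$ in the unknotted case), and the remaining case $V^\pm_t=\pm T$ is eliminated by the identical Rolfsen $\mp1$-twist chain ending in a lens space surgery on the figure eight knot $T^\pm_{\mp1}=S$. The additional detail you supply --- justifying the middle homeomorphism via the compression of the cable space and slope-preservation on the $U$-cusp --- is exactly what the paper's displayed chain implicitly relies on.
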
 

\begin{figure}[htbp!]
    \centering 
    \resizebox{\textwidth}{!}{\centering
\begin{tikzpicture} [squarednode/.style={rectangle, draw=black, fill=white, thick, minimum size=40pt}] 
\begin{scope}[xshift=-9cm]
    \begin{knot}[end tolerance=1pt]  
    \flipcrossings{1}
        \strand[white, double=black, thick, double distance=1pt] (-1,-1.25) 
        to [out=left, in=down] (-2.5,0) 
        to [out=up, in=left] (-1,3)
        to [out=right, in=up] (-0.25,2.5)
        to [out=down, in=up] (-0.25,1) 
        to [out=down, in=up] (-2,0)
        to [out=down, in=left] (-1,-0.75); 
        \strand[white, double=black, thick, double distance=1pt] (-1,-0.75) to [out=right, in=left] (1,-0.75); 
        \strand[white, double=black, thick, double distance=1pt] (1,-0.75) 
        to [out=right, in=down] (2,0)
        to [out=up, in=down] (0.25,1)
        to [out=up, in=down] (0.25,2.5) 
        to [out=up, in=left] (1,3) 
        to [out=right, in=up] (2.5,0) 
        to [out=down, in=right] (1,-1.25); 
        \strand[white, double=black, thick, double distance=1pt] (1,-1.25) to [out=left, in=right] (-1,-1.25); 
        \node[] at (2.5,3) {\large $p/q$}; 
        \node[] at (0,-2.5) {\Large $T^{\pm}_{0}=U$}; 
    \end{knot} 
    \node[squarednode, fill=white, opacity=1, draw=black] at (0,1.75) {\Large $\mp1$}; 
\end{scope}
\node[] at (-6,1) {\Large $\cong$};
\begin{scope}[xshift=-3cm]
    \begin{knot}[end tolerance=1pt] 
    \flipcrossings{1,4} 
        \strand[white, double=black, thick, double distance=1pt] (-1,-1.25) 
        to [out=left, in=down] (-2.5,0) 
        to [out=up, in=left] (-1,3)
        to [out=right, in=up] (-0.25,2.5)
        to [out=down, in=up] (-0.25,1) 
        to [out=down, in=up] (-2,0)
        to [out=down, in=left] (-1,-0.75); 
        \strand[white, double=black, thick, double distance=1pt] (-1,-0.75) to [out=right, in=left] (1,-0.75); 
        \strand[white, double=black, thick, double distance=1pt] (1,-0.75) 
        to [out=right, in=down] (2,0)
        to [out=up, in=down] (0.25,1)
        to [out=up, in=down] (0.25,2.5) 
        to [out=up, in=left] (1,3) 
        to [out=right, in=up] (2.5,0) 
        to [out=down, in=right] (1,-1.25); 
        \strand[white, double=black, thick, double distance=1pt] (1,-1.25) to [out=left, in=right] (-1,-1.25); 
        \strand[white, double=black, thick, double distance=1pt] (-3.25,0) 
        to [out=down, in=down] (-1.25,0) 
        to [out=up, in=up] (-3.25,0); 
        \node[] at (-3,-1) {\large $\mp1$}; 
        \node[] at (2.5,3) {\large $p/q$}; 
        \node[] at (0,-2.5) {\Large $T^{\pm1}_{\pm1} = \pm T$}; 
    \end{knot} 
    \node[squarednode, fill=white, opacity=1, draw=black] at (0,-1) {\Large $\mp1$}; 
    \node[squarednode, fill=white, opacity=1, draw=black] at (0,1.75) {\Large $\mp1$}; 
\end{scope}
\node[] at (0,1) {\Large $\cong$};
\begin{scope}[xshift=3cm]
    \begin{knot}[end tolerance=1pt] 
    \flipcrossings{1,4} 
        \strand[white, double=black, thick, double distance=1pt] (-1,-1.25) 
        to [out=left, in=down] (-2.5,0) 
        to [out=up, in=left] (-1,3)
        to [out=right, in=up] (-0.25,2.5)
        to [out=down, in=up] (-0.25,1) 
        to [out=down, in=up] (-2,0)
        to [out=down, in=left] (-1,-0.75); 
        \strand[white, double=black, thick, double distance=1pt] (-1,-0.75) to [out=right, in=left] (1,-0.75); 
        \strand[white, double=black, thick, double distance=1pt] (1,-0.75) 
        to [out=right, in=down] (2,0)
        to [out=up, in=down] (0.25,1)
        to [out=up, in=down] (0.25,2.5) 
        to [out=up, in=left] (1,3) 
        to [out=right, in=up] (2.5,0) 
        to [out=down, in=right] (1,-1.25); 
        \strand[white, double=black, thick, double distance=1pt] (1,-1.25) to [out=left, in=right] (-1,-1.25); 
        \strand[white, double=black, thick, double distance=1pt] (-3.25,0) 
        to [out=down, in=down] (-1.25,0) 
        to [out=up, in=up] (-3.25,0); 
        \node[] at (-3,-1) {\large $\mp1$}; 
        \node[] at (2.5,3) {\large $p/qs^2$}; 
        \node[] at (0,-2.5) {\Large $T^{\pm1}_{0}=U$};
    \end{knot} 
    \node[squarednode, fill=white, opacity=1, draw=black] at (0,1.75) {\Large $\mp1$}; 
\end{scope}
\node[] at (6,1) {\Large $\cong$};
\begin{scope}[xshift=9cm]
    \begin{knot}[end tolerance=1pt] 
        \strand[white, double=black, thick, double distance=1pt] (-1,-1.25) 
        to [out=left, in=down] (-2.5,0) 
        to [out=up, in=left] (-1,3)
        to [out=right, in=up] (-0.25,2.5)
        to [out=down, in=up] (-0.25,1) 
        to [out=down, in=up] (-2,0)
        to [out=down, in=left] (-1,-0.75); 
        \strand[white, double=black, thick, double distance=1pt] (-1,-0.75) to [out=right, in=left] (1,-0.75); 
        \strand[white, double=black, thick, double distance=1pt] (1,-0.75) 
        to [out=right, in=down] (2,0)
        to [out=up, in=down] (0.25,1)
        to [out=up, in=down] (0.25,2.5) 
        to [out=up, in=left] (1,3) 
        to [out=right, in=up] (2.5,0) 
        to [out=down, in=right] (1,-1.25); 
        \strand[white, double=black, thick, double distance=1pt] (1,-1.25) to [out=left, in=right] (-1,-1.25); 
        \node[] at (2.5,3) {\large $p/qs^2$}; 
        \node[] at (0,-2.5) {\Large $T^{\pm1}_{\mp1}=S$};
    \end{knot} 
    \node[squarednode, fill=white, opacity=1, draw=black] at (0,-1) {\Large $\pm1$}; 
    \node[squarednode, fill=white, opacity=1, draw=black] at (0,1.75) {\Large $\mp1$}; 
\end{scope}
\end{tikzpicture} } 
    \caption{Rolfsen $\mp1$-twist trick.}
\label{figure:rolfsen}
\end{figure}
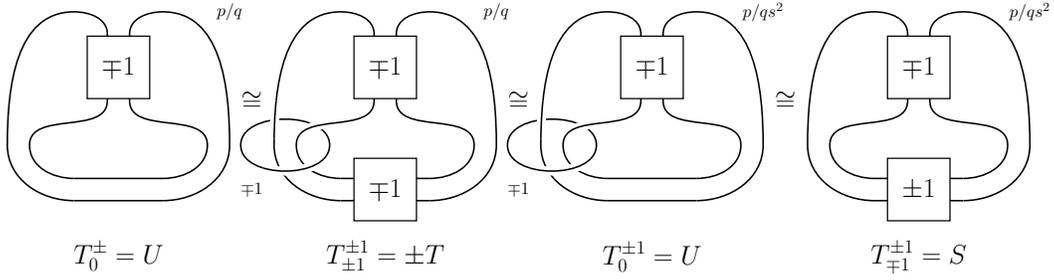

In general, we can use linking number as an invariant to distinguish between link complements. 

\begin{lemma} 
\label{lemma:linking}
    Let $P = Q \cup U, \tilde{P} = \tilde{Q} \cup \tilde{U}$ be links whose linking numbers $w,\tilde{w}$ satisfy $|w|\neq |\tilde{w}|$. 
    
    Then there exists no orientation-preserving homeomorphism $\mathbb{S}^3_P \cong \mathbb{S}^3_{\tilde{P}}$ between their complements. 
    \begin{proof}
        Consider the map $H_1(\mathbb{T}^2) \to H_1(\mathbb{S}^3_P)$ induced by the inclusion $\mathbb{T}^2 \hookrightarrow \mathbb{S}^3_P$ of either toroidal boundary component. The index of the image of this map is 
        $$
        \begin{cases}
            |w| & \text{if} \ w\neq0, \\
            \infty & \text{if} \ w=0. \\
        \end{cases}
        $$
        Under any homeomorphism $\mathbb{S}^3_P \cong \mathbb{S}^3_{\tilde{P}}$, this index would be preserved, giving $|w|=|\tilde{w}|$. 
    \end{proof}
\end{lemma}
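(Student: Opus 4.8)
The plan is to extract from a two-component link complement a homeomorphism invariant that records the absolute value of the linking number. For a link $P = U \cup Q$ in $\mathbb{S}^3$, a Mayer--Vietoris (or half-lives-half-dies) computation gives $H_1(\mathbb{S}^3_P;\mathbb{Z}) \cong \mathbb{Z}^2$, freely generated by the meridians $[\mu_U]$ and $[\mu_Q]$, exactly as recorded in the proof of Lemma~\ref{lemma:gordon}. First I would fix one boundary torus, say $\mathbb{T}^2 = \partial\nu(Q)$, and compute the image of the inclusion-induced map $\iota_*\colon H_1(\mathbb{T}^2;\mathbb{Z}) \to H_1(\mathbb{S}^3_P;\mathbb{Z})$. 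Since $H_1(\mathbb{T}^2)$ is generated by $[\mu_Q]$ and $[\lambda_Q]$, and $[\lambda_Q] = w[\mu_U]$ in $H_1(\mathbb{S}^3_P)$ (again as in Lemma~\ref{lemma:gordon}), the image is the subgroup $\langle [\mu_Q],\, w[\mu_U]\rangle \leq \mathbb{Z}^2$. This has index $|w|$ when $w \neq 0$ and infinite index when $w = 0$; by symmetry of the linking number the same value is obtained from the other boundary torus $\partial\nu(U)$.

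Next I would observe that this index is preserved under homeomorphism. Any homeomorphism $h\colon \mathbb{S}^3_P \cong \mathbb{S}^3_{\tilde P}$ carries the toroidal boundary components of $\mathbb{S}^3_P$ bijectively onto those of $\mathbb{S}^3_{\tilde P}$ (possibly interchanging the two) and induces an isomorphism $h_*\colon H_1(\mathbb{S}^3_P) \to H_1(\mathbb{S}^3_{\tilde P})$ intertwining the inclusion-induced maps on first homology of matched boundary tori. An isomorphism of abelian groups sends a subgroup of index $d$ to a subgroup of index $d$, with $\infty \mapsto \infty$. Hence the index computed from each boundary torus of $\mathbb{S}^3_P$ equals the index computed from its image torus of $\mathbb{S}^3_{\tilde P}$; combining with the previous paragraph forces $|w| = |\tilde w|$ (interpreting $\infty = \infty$ in the winding-number-zero case), contradicting the hypothesis $|w| \neq |\tilde w|$. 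Orientation of $h$ plays no role, since the index is sign-insensitive by construction.

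The argument is essentially routine; the one point requiring a word of care is that the possible interchange of the two boundary tori by $h$ is harmless, which is precisely why one works with $|w|$ rather than a signed or ordered quantity: both boundary components of $\mathbb{S}^3_P$ produce the same index $|w|$, so no matching of boundary components needs to be pinned down. I expect no genuine obstacle here; a slightly slicker packaging is to take the invariant to be the \emph{set} of indices of inclusion-induced images over all boundary tori, which is manifestly preserved by any homeomorphism and equals $\{|w|\}$ for $\mathbb{S}^3_P$.
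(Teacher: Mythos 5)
Your argument is correct and is essentially identical to the paper's proof: both compute the index of the image of the inclusion-induced map $H_1(\mathbb{T}^2)\to H_1(\mathbb{S}^3_P)$, identify it as $|w|$ (or $\infty$ when $w=0$), and observe that a homeomorphism preserves this index. Your additional remark that the possible interchange of boundary tori is harmless because both yield the same index is a sensible, if implicit, point in the paper's version.
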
 

Recall that the Whitehead link $W^\pm_t$ has linking number $w=0$. We already know from Lemma \ref{lemma:winding} that, in order to have our slope-preserving homeomorphism $\mathbb{S}^3_{W^\pm_t}(p/q)\cong\mathbb{S}^3_{\tilde{P}}(\tilde{p}/\tilde{q})$, $\tilde{P}$ should also have linking number $\tilde{w}=0$. As a consequence, we can obstruct candidates for $\tilde{P}$ by showing that for each 2-cusped orientable hyperbolic 3-manifold $M$ with $\vol(M)< V_{\max}$, if there exists an orientation-preserving homeomorphism $M\cong\mathbb{S}^3_{\tilde{P}}$ to the complement of any hyperbolic link $\tilde{P} = \tilde{Q} \cup \tilde{U}$, then $\tilde{P}$ must have linking number $\tilde{w}\neq0$. This rules out $M$ as a potential pattern piece.

\subsection{Volume inequalities} 

Our main technique in addressing the case of Whitehead doubles will be the use of hyperbolic volume. In particular, we rely on the following inequalities to construct our upper bound $V_{\max}$ on $\vol(\mathbb{S}^3_{\tilde{P}})$ for any link complement $\mathbb{S}^3_{\tilde{P}}$ such that $\mathbb{S}^3_{W^\pm_t}(p/q) \cong \mathbb{S}^3_{\tilde{P}}(\tilde{p}/\tilde{q})$. 

\begin{theorem}[\cite{fkp, thurston}]
\label{theorem:inequalities}
    Let $M$ be a compact orientable hyperbolic 3-manifold with toroidal boundary $\partial M = \sqcup_{i=1}^m \mathbb{T}^2_i$. Take a subset $\{\mathbb{T}^2_{i_j}\}_{j=1}^n$ of boundary tori together with slopes $\{\gamma_{i_j} \subset \mathbb{T}^2_{i_j}\}_{j=1}^n$. Choose horoball neighbourhoods $N_{i_j}$ of each $\mathbb{T}^2_{i_j}$ such that each $l(\gamma_{i_j}) \geq l_{\min} > 2\pi$.
    
    Then $M(\gamma_{i_1}, \ldots, \gamma_{i_n})$ is a hyperbolic 3-manifold such that $$\Big(1-\Big(\frac{2\pi}{l_{\min}}\Big)^2\Big)^{3/2} \cdot \vol(M) \leq \vol(M(\gamma_{i_1}, \ldots, \gamma_{i_n})) < \vol(M).$$
\end{theorem}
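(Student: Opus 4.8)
The plan is to prove the two displayed inequalities separately, since they come from genuinely different sources. Write $X := M(\gamma_{i_1},\ldots,\gamma_{i_n})$. First note that hyperbolicity of $X$ is not an extra burden: each chosen slope has $l(\gamma_{i_j}) \geq l_{\min} > 2\pi > 6$, so the $6$-theorem (Theorem~\ref{theorem:6}, in its version allowing several simultaneous fillings along slopes that are long in disjoint horoball neighbourhoods) already shows $X$ is hyperbolic, and hence $\vol(X)$ is well defined and equals $v_3\|X\|$ for the Gromov norm.

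For the strict upper bound $\vol(X) < \vol(M)$ I would argue via simplicial volume, following Thurston. The core curves of the filling solid tori form a nonempty geodesic link $\Sigma \subset X$, and $X \setminus \Sigma$ is homeomorphic to the interior of $M$. Thurston's strict monotonicity of the Gromov norm under drilling a geodesic link then gives $\|M\| = \|X \setminus \Sigma\| > \|X\|$, and multiplying by $v_3$ yields $\vol(M) > \vol(X)$. (Equivalently, this is immediate from Thurston's hyperbolic Dehn surgery theorem, which produces the metric on $X$ as a limit of metrics of strictly smaller volume.)

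For the lower bound, which is the substantive half, I would invoke the cone-manifold deformation machinery used by Futer--Kalfagianni--Purcell \cite{fkp}. Realising $\Sigma \subset X$ as a geodesic link, one interpolates between the complete cusped hyperbolic metric on $M$ (the ``cone angle $0$'' end, where $\Sigma$ has receded into the cusps) and the complete hyperbolic metric on $X$ (the cone angle $2\pi$ end) through hyperbolic cone metrics on the underlying space of $X$ with cone angle $\alpha \in [0,2\pi]$ along $\Sigma$. The hypothesis $l_{\min} > 2\pi$ is exactly what makes Hodgson--Kerckhoff's cone-deformation theory applicable over the whole interval: it bounds the normalised lengths of the filling slopes and hence forces an embedded tube of controlled radius about $\Sigma$ for every $\alpha$, ruling out degeneration. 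The Schläfli formula $\tfrac{d}{d\alpha}\vol = -\tfrac12\, l_\alpha(\Sigma)$, together with the Hodgson--Kerckhoff estimate on how fast $l_\alpha(\Sigma)$ can grow as $\alpha \to 2\pi$, integrates to the stated scaling factor $\bigl(1-(2\pi/l_{\min})^2\bigr)^{3/2}$. This bookkeeping is precisely \cite[Theorem~1.1]{fkp}.

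The main obstacle is the quantitative content of this last step: obtaining the sharp exponent $3/2$ and the sharp threshold $2\pi$ requires the full Hodgson--Kerckhoff control of the maximal embedded tube about the singular locus throughout the deformation, plus the verification that every cone angle in $(0,2\pi)$ is a regular point of the deformation. That is not short, so in the paper I would simply state Theorem~\ref{theorem:inequalities} as the combination of Thurston's work (strict upper bound, and hyperbolicity via the $6$-theorem) and \cite{fkp} (lower bound), and treat the latter as a black box; the only thing that really needs checking by hand is the trivial observation $l_{\min} > 2\pi > 6$, which feeds Theorem~\ref{theorem:6}.
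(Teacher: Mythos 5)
Your proposal is correct and matches the paper's treatment: Theorem~\ref{theorem:inequalities} is stated there as a quoted result, with the strict upper bound $\vol(M(\gamma_{i_1},\ldots,\gamma_{i_n}))<\vol(M)$ attributed to Thurston and the lower bound with scaling factor $\bigl(1-(2\pi/l_{\min})^2\bigr)^{3/2}$ being precisely \cite[Theorem~1.1]{fkp}, exactly as you describe. Your decomposition into the $6$-theorem observation, Thurston's volume monotonicity, and the Futer--Kalfagianni--Purcell cone-deformation bound is the standard and intended reading, and deferring the quantitative half to \cite{fkp} is what the paper does as well.
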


In our case, $M$ will be the complement of a 2-component hyperbolic link (so $m=2$) and we will fill one boundary component (so $n=1$) along slope $\tilde{p}/\tilde{q}$. 

\begin{lemma}
\label{lemma:bound}
     Let $W^\pm_t = V^\pm_t \cup U$ be a $\pm1$-clasped $t$-twisted Whitehead link for some integer $t\in\mathbb{Z}$ and let $\tilde{P} = \tilde{Q} \cup \tilde{U}$ be a hyperbolic pattern. Suppose that there exists an orientation-preserving homeomorphism $\mathbb{S}^3_{W^\pm_t}(p/q) \cong \mathbb{S}^3_{\tilde{P}}(\tilde{p}/\tilde{q})$ for some slopes $p/q, \tilde{p}/\tilde{q}$ on the boundary components of $\mathbb{S}^3_{W^\pm_t}, \mathbb{S}^3_{\tilde{P}}$ corresponding to $V^\pm_t, \tilde{Q}$, respectively, with $|\tilde{q}|\geq q_{\min}$ for some fixed $q_{\min}\geq11$. 
    
    Then $q_{\min}$ determines a constant $$V_{\max} := \Big(1-3\Big(\frac{2\pi}{q_{\min}}\Big)^2\Big)^{-3/2} \cdot \vol(\mathbb{S}^3_{W^\pm_t})$$ such that $\vol(\mathbb{S}^3_{\tilde{P}}) <V_{\max}$. 
    \begin{proof}
        Since $|\tilde{q}|\geq q_{\min}\geq11$, the length of the slope $\tilde{p}/\tilde{q}$ satisfies 
        $$l(\tilde{p}/\tilde{q}) \geq \frac{2\sqrt{3}}{6} \cdot \Delta(\tilde{p}/\tilde{q},1/0) = \frac{|\tilde{q}|}{\sqrt{3}} \geq \frac{q_{\min}}{\sqrt{3}} =: l_{\min} > 2\pi$$ 
        by Lemma \ref{lemma:facts} and Theorem \ref{theorem:6} (due to the existence of the exceptional filling $\mathbb{S}^3_{\tilde{P}}(1/0)\cong\mathbb{S}^1\times\mathbb{D}^2$). Therefore we can apply Theorem \ref{theorem:inequalities} to obtain the inequality 
        $$\Big(1-3\Big(\frac{2\pi}{q_{\min}}\Big)^2\Big)^{3/2} \cdot \vol(\mathbb{S}^3_{\tilde{P}}) \leq \vol(\mathbb{S}^3_{\tilde{P}}(\tilde{p}/\tilde{q})) = \vol(\mathbb{S}^3_{W^\pm_t}(p/q)) < \vol(\mathbb{S}^3_{W^\pm_t})$$  
        and rearrange to reach the desired result.  
    \end{proof}
\end{lemma}

Note that we require $q_{\min} \geq 11$ in order for the hypothesis $l_{\min}>2\pi$ to be satisfied. We also need to assume that $q_{\min} \geq 24$ in order to ensure that we obtain a finite list from SnapPy; otherwise, the volume bound $V_{\max}$ becomes larger than the volume of the 3-chain link complement, which is conjecturally the smallest limit point for the set of volumes of 2-cusped orientable hyperbolic 3-manifolds. 

We know that $\mathbb{S}^3_{\tilde{P}}$ is the complement of a 2-component hyperbolic link $\tilde{P} = \tilde{Q} \cup \tilde{U}$ with linking number $\tilde{w}=0$. These constraints will allow us to eliminate most of the 2-cusped orientable hyperbolic 3-manifolds with volume at most $V_{\max}$ from the list by showing that none of them are of this form. As we will see, one way to do this is by comparing different fillings of these manifolds: for homological reasons, the existence of certain types of fillings obstructs the $\tilde{w}=0$ condition.

\subsection{Comparing Dehn fillings} 

When considering pairs of fillings of a 2-cusped orientable hyperbolic 3-manifold $M$, there are several situations which can arise that prevent $M$ from being of the required form. The simplest case occurs when there are multiple solid torus fillings. 

\begin{lemma} 
\label{lemma:berge-gabai}
   Suppose that $M$ is a 2-cusped orientable hyperbolic 3-manifold which admits distinct solid torus fillings, $M((a_1,b_1),\ast)$, $M((\hat{a}_1,\hat{b}_1),\ast)$, on the same boundary component. 

   Then there exists no hyperbolic link $\tilde{P} = \tilde{Q} \cup \tilde{U}$ with winding number $\tilde{w}=0$ such that $M \cong \mathbb{S}^3_{\tilde{P}}$. 
    \begin{proof} 
        We can think of $M((\hat{a}_1,\hat{b}_1),\ast)$ as a solid torus surgery on the core of the filling in the solid torus $M((a_1,b_1),\ast)$. Since $M$ is hyperbolic, and thus irreducible, the core cannot be contained in any $\mathbb{B}^3 \subset \mathbb{S}^1 \times \mathbb{D}^2$. We can now apply the classification of such solid torus fillings given in \cite{bg} to deduce that the core must be a Berge-Gabai knot: a 0-bridge or 1-bridge braid in $\mathbb{S}^1 \times \mathbb{D}^2$, which clearly has non-zero winding number in both cases. Moreover, if $M \cong \mathbb{S}^3_{\tilde{P}}$, then this core corresponds to one of the link components sitting inside the complement of the other. It follows that $\tilde{w} \neq 0$. 
    \end{proof} 
\end{lemma}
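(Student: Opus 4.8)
The plan is to prove Lemma~\ref{lemma:berge-gabai} by combining the Berge--Gabai classification of solid torus fillings with the observation that winding number zero is incompatible with a knot in $\mathbb{S}^1 \times \mathbb{D}^2$ admitting a non-trivial solid torus surgery. First I would set up the geometric picture: suppose $M \cong \mathbb{S}^3_{\tilde{P}}$ for a hyperbolic link $\tilde{P} = \tilde{U} \cup \tilde{Q}$. One of the two distinct solid torus fillings, say $M((a_1,b_1),\ast) \cong \mathbb{S}^1 \times \mathbb{D}^2$, exhibits the second component $\tilde{Q}$ as sitting inside the solid torus obtained by that Dehn filling; write $c$ for the core of the filling solid torus, so that $\tilde{Q} \subset (\mathbb{S}^1 \times \mathbb{D}^2)_c$ in the complementary solid torus. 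Because $M$ is hyperbolic and hence irreducible, $c$ cannot lie in a ball inside $\mathbb{S}^1 \times \mathbb{D}^2$.

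Next I would exploit the second solid torus filling. The filling $M((\hat a_1, \hat b_1),\ast)$ on the same cusp is again a solid torus, and from the perspective of the first filling this is precisely a Dehn surgery on $c \subset \mathbb{S}^1 \times \mathbb{D}^2$ yielding $\mathbb{S}^1 \times \mathbb{D}^2$ again. Since the two fillings are distinct, this is a \emph{non-trivial} such surgery. By the classification of knots in a solid torus admitting non-trivial solid torus surgeries \cite{bg}, $c$ must be a Berge--Gabai knot, i.e.\ a $0$-bridge or $1$-bridge braid in $\mathbb{S}^1 \times \mathbb{D}^2$. In either case such a braid closure wraps around the $\mathbb{S}^1$ direction a positive number of times, so $c$ has non-zero winding number in $\mathbb{S}^1 \times \mathbb{D}^2$.

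Finally I would translate this back to the link $\tilde{P}$. Under the homeomorphism $M \cong \mathbb{S}^3_{\tilde{P}}$, the core $c$ of the first solid torus filling is identified with a curve isotopic to the component $\tilde{Q}$, now viewed inside the solid torus neighbourhood that is the complementary piece — equivalently, $\tilde{Q}$ sits inside the complement of $\tilde{U}$, which (after the filling making $\tilde{U}$'s exterior a solid torus) is exactly this $\mathbb{S}^1 \times \mathbb{D}^2$. The winding number $\tilde{w}$ of $\tilde{P}$ is by definition the absolute value of the algebraic intersection of $\tilde{Q}$ with a meridian disc of this solid torus, which equals the winding number of $c$, hence is non-zero. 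This contradicts $\tilde{w} = 0$, completing the proof.

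The main obstacle I anticipate is bookkeeping the identification of the two solid tori and making precise that ``$M((\hat a_1,\hat b_1),\ast)$ is a solid torus surgery on the core of the filling of $M((a_1,b_1),\ast)$'' — one must be careful that the filling slopes on the \emph{unfilled} cusp are untouched and that the two distinct fillings genuinely produce a non-trivial reembedding, so that \cite{bg} applies rather than the trivial case. Once that reduction is in place, the winding-number conclusion is immediate from the structure of $0$- and $1$-bridge braids.
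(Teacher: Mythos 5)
Your proof is correct and follows essentially the same route as the paper: interpret the second solid torus filling as a non-trivial solid torus surgery on the core of the first, use irreducibility of the hyperbolic $M$ to exclude the core lying in a ball, invoke the Berge--Gabai classification to conclude the core is a $0$- or $1$-bridge braid with non-zero winding number, and translate this back into $\tilde{w}\neq0$. Your extra remark about needing the two fillings to be genuinely distinct so that the surgery is non-trivial is exactly the role of the ``distinct'' hypothesis in the statement, and matches the paper's (equally brief) treatment of the homological identification at the end.
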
 

If we can only find a single solid torus filling on a component of $M$, then we check for the following situation instead. 

\begin{lemma} 
\label{lemma:free} 
    Suppose that $M$ is a 2-cusped orientable hyperbolic 3-manifold which admits a solid torus filling $M((a_1,b_1), \ast)\cong\mathbb{S}^1\times\mathbb{D}^2$. Let $M((a_1,b_1), (a_2,b_2))$ be the unique $\mathbb{S}^1 \times \mathbb{S}^2$ filling of this $\mathbb{S}^1 \times \mathbb{D}^2$ and suppose that there also exists a filling $M((a_1',b_1'),(a_2,b_2))$ whose first homology is finite. 
    
   Then there exists no hyperbolic link $\tilde{P} = \tilde{Q} \cup \tilde{U}$ with winding number $\tilde{w}=0$ such that $M \cong \mathbb{S}^3_{\tilde{P}}$. 
    \begin{proof}
        Suppose for a contradiction that $M\cong\mathbb{S}^3_{\tilde{P}}$ for such a link $\tilde{P}$. Then we can express the filling data in the form of slopes $(a,b)=\alpha/\beta$ with respect to the meridian and longitude of each link component. Since $\tilde{P}$ has winding number $\tilde{w}=0$, the first homology of a filling along both components can easily be written down. In particular, consider the following two lens space fillings of our $\mathbb{S}^1\times \mathbb{D}^2$: 
        \begin{align*}
            \mathbb{Z} \cong H_1(\mathbb{S}^1 \times \mathbb{S}^2) \cong H_1(M(\alpha_1/\beta_1, \alpha_2/\beta_2)) \cong \mathbb{Z}/\alpha_1\mathbb{Z} \oplus \mathbb{Z}/\alpha_2\mathbb{Z}, \\
            \mathbb{Z}/2\mathbb{Z} \cong H_1(\mathbb{RP}^3) \cong H_1(M(\alpha_1/\beta_1, \alpha_2'/\beta_2')) \cong \mathbb{Z}/\alpha_1\mathbb{Z} \oplus \mathbb{Z}/\alpha_2'\mathbb{Z}. 
        \end{align*}
        It follows that $|\alpha_1|=1$, $|\alpha_2|=0$ and $|\alpha_2'|=2$. But then $H_1(M(\alpha_1'/\beta_1', \alpha_2/\beta_2))\cong\mathbb{Z}/\alpha_1'\mathbb{Z} \oplus\mathbb{Z}$.
    \end{proof}
\end{lemma}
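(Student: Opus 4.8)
The plan is to argue by contradiction. Suppose $M \cong \mathbb{S}^3_{\tilde P}$ for a hyperbolic link $\tilde P = \tilde U \cup \tilde Q$ with winding number $\tilde w = 0$, and fix such a homeomorphism; it identifies the two cusps of $M$ with $\partial\nu(\tilde U)$ and $\partial\nu(\tilde Q)$, and lets us rewrite each filling coefficient $(a,b)$ in the statement as a slope $\alpha/\beta$ relative to the canonical meridian--longitude basis of the relevant link component. The role of $\tilde w = 0$ is, exactly as in the proof of Lemma \ref{lemma:gordon}, that $[\lambda_1] = \tilde w[\mu_2] = 0$ and $[\lambda_2] = \tilde w[\mu_1] = 0$ in $H_1(\mathbb{S}^3_{\tilde P}) \cong \mathbb{Z}[\mu_1] \oplus \mathbb{Z}[\mu_2]$; consequently the first homology of every Dehn filling of $M$ is obtained from $\mathbb{Z} \oplus \mathbb{Z}$ simply by killing, for each filled component, the numerator $\alpha$ of its slope times the corresponding meridian.

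Write $\alpha_1/\beta_1$, $\alpha_1'/\beta_1'$ for the slopes on the first component matching $(a_1,b_1)$, $(a_1',b_1')$ and $\alpha_2/\beta_2$ for the slope on the second matching $(a_2,b_2)$. First I would extract $|\alpha_1| = 1$ from $M((a_1,b_1),\ast) \cong \mathbb{S}^1 \times \mathbb{D}^2$: filling only the first component yields $H_1 \cong \mathbb{Z}/\alpha_1\mathbb{Z} \oplus \mathbb{Z}$, which is $\mathbb{Z}$ precisely when $|\alpha_1| = 1$. (Working only with the closed fillings recorded by the census, one reaches the same conclusion by comparing the $\mathbb{S}^1 \times \mathbb{S}^2$ filling, with $H_1 \cong \mathbb{Z}/\alpha_1\mathbb{Z} \oplus \mathbb{Z}/\alpha_2\mathbb{Z} \cong \mathbb{Z}$, against the $\mathbb{RP}^3$ filling obtained by replacing $(a_2,b_2)$ by a slope at distance $2$ from it, with $H_1 \cong \mathbb{Z}/\alpha_1\mathbb{Z} \oplus \mathbb{Z}/\alpha_2'\mathbb{Z} \cong \mathbb{Z}/2\mathbb{Z}$: the latter rules out $\alpha_1 = 0$, and together they force $|\alpha_1| = 1$, $\alpha_2 = 0$, $|\alpha_2'| = 2$.) In any case, substituting $|\alpha_1| = 1$ into $\mathbb{Z}/\alpha_1\mathbb{Z} \oplus \mathbb{Z}/\alpha_2\mathbb{Z} \cong \mathbb{Z}$ forces $\alpha_2 = 0$, so $(a_2,b_2)$ is the Seifert longitude $\lambda_2$ of the second component.

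It then remains to play off the filling $M((a_1',b_1'),(a_2,b_2))$ against its finiteness hypothesis. There the second component is filled along $\lambda_2$, which kills the trivial class $[\lambda_2] = 0$ and imposes no relation, while the first contributes $\mathbb{Z}/\alpha_1'\mathbb{Z}$; hence $H_1(M((a_1',b_1'),(a_2,b_2))) \cong \mathbb{Z}/\alpha_1'\mathbb{Z} \oplus \mathbb{Z}$ is infinite, contradicting the hypothesis and completing the proof. There is no substantive obstacle here; the only care needed is clerical --- translating the census framings into meridian--longitude slopes correctly, and noting that the whole argument is symmetric in the two components precisely because $\tilde w = 0$ makes both Seifert longitudes nullhomologous.
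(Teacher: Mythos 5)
Your argument is correct and is essentially the paper's proof: you use $\tilde w=0$ to make both longitudes nullhomologous, compute $H_1$ of each filling as $\mathbb{Z}/\alpha_1\mathbb{Z}\oplus\mathbb{Z}/\alpha_2\mathbb{Z}$, force $\alpha_2=0$ from the $\mathbb{S}^1\times\mathbb{S}^2$ filling, and contradict the finiteness hypothesis via $H_1(M(\alpha_1'/\beta_1',\alpha_2/\beta_2))\cong\mathbb{Z}/\alpha_1'\mathbb{Z}\oplus\mathbb{Z}$. The only (harmless) variation is that you read off $|\alpha_1|=1$ directly from $H_1(\mathbb{S}^1\times\mathbb{D}^2)\cong\mathbb{Z}$, whereas the paper extracts it by comparing the $\mathbb{S}^1\times\mathbb{S}^2$ and $\mathbb{RP}^3$ fillings of that solid torus — a route you also record parenthetically.
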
 

We will now use these results to prove Theorem \ref{theorem:refined}.

\subsection{Proof of Theorem \ref{theorem:refined}} 

Given a slope-preserving homeomorphism $\mathbb{S}^3_{W^\pm_t}(p/q)\cong\mathbb{S}^3_{\tilde{P}}(\tilde{p}/\tilde{q})$, we need to show that $\tilde{P}=W^\pm_t$. We first claim that $\tilde{P}=W^\pm_{t'}$ for some $t'\in\mathbb{Z}$. 

\begin{table}[htbp!]
\centering 
\begin{multicols}{2} 
    \begin{tabular}{|r|c|c|} \hline 
        $M$ & $H_1(M)$ & $|\operatorname{Tor}(H_1(M))|$ \\ 
        \hline \hline 
        $m412$ & $\mathbb{Z} \oplus \mathbb{Z}/2\mathbb{Z}$ & 2 \\ 
        \hline \hline 
        $s596$ & $\mathbb{Z} \oplus \mathbb{Z}/2\mathbb{Z}$ & 2 \\ 
        \hline 
    \end{tabular} 
\newline 
\vspace{1em} 
\newline 
    \begin{tabular}{|r|l|c|} \hline 
        $M$ & $\tilde{P}$ with $M \cong \mathbb{S}^3_{\tilde{P}}$ & $|\tilde{w}|\neq0$ \\\hline \hline 
        $m125$ & $L13n5885$ & 5 \\ 
        $m203$ & $L6a2$ & 3 \\ 
        $m295$ & $L9n14$ & 1 \\ 
        $m367$ & $L7a6 $& 1 \\ 
        $m391$ & $L8a12$ & 4 \\ 
        \hline \hline 
        $s443$ & $L11n208$ & 2 \\ 
        $s578$ & $L9a364$ & 2 \\ 
        $s602$ & $L10a114$ & 5 \\ 
        $s647$ & $L12n1027$ & 2 \\ 
        \hline \hline 
        $ v1060 $ & $L13n5895$ & 3 \\ 
        $ v1263 $ & $L11a360 $ & 3 \\ 
        \hline
    \end{tabular}    
\newline
\columnbreak
\newline 
\vspace{1em} 
    \begin{tabular}{|r|l|} \hline 
        $M$ & 
        $\mathbb{S}^1\times\mathbb{D}^2$ fillings \\ 
        \hline \hline 
        $ m202 $ & (1,0), (0, 1), (-1, 1) \\ 
        $ m329 $ & (1,0), (0, 1) \\ 
        $ m357 $ & (1,0), (0, 1) \\ 
        $ m366 $ & (1,0), (0, 1) \\ 
        $ m388 $ & (1,0), (0, 1) \\ 
        \hline \hline 
        $ s503 $ & (1,0), (0, 1) \\ 
        $ s548 $ & (1,0), (0, 1) \\ 
        $ s579 $ & (1,0), (0, 1) \\ 
        $ s601 $ & (1,0), (0, 1) \\ 
        \hline \hline 
        $ v1180 $ & (1,0), (0, 1) \\ 
        $ v1203 $ & (1,0), (0, 1) \\ 
        $ v1264 $ & (1,0), (0, 1) \\ 
        \hline \hline 
        $ t02728 $ & (1,0), (0, 1) \\  
        $ t02750 $ & (1,0), (0, 1) \\ 
        \hline 
    \end{tabular}  
\vfill 
\end{multicols} 
\caption{2-cusped orientable hyperbolic 3-manifolds which can be ruled out by one of the simpler elimination techniques (as described in the proof of Theorem \ref{theorem:refined}).} 
\label{table:elimination}
\end{table} 

\begin{proposition} 
\label{proposition:elimination}
    Let $W^\pm_t = V^\pm_t \cup U$ be a $\pm1$-clasped $t$-twisted Whitehead link for some integer $t\in\mathbb{Z}$ and let $\tilde{P} = \tilde{Q} \cup \tilde{U}$ be a hyperbolic pattern. Let $p/q, \tilde{p}/\tilde{q}$ be slopes on the boundary components of $\mathbb{S}^3_{W^\pm_t}, \mathbb{S}^3_{\tilde{P}}$ corresponding to $V^\pm_t, \tilde{Q}$, respectively. Assume that the SnapPy census of 2-cusped orientable hyperbolic 3-manifolds is complete up to stage $k$ in Table \ref{table:stages} (namely, up to the volume bound $V_k$, or up to the first $a_k$ items) and suppose that $|q|,|\tilde{q}|\geq q_k$. 
    
    If there exists a slope-preserving homeomorphism $\mathbb{S}^3_{W^\pm_t}(p/q)\cong\mathbb{S}^3_{\tilde{P}}(\tilde{p}/\tilde{q})$, then $\tilde{P}=W^\pm_{t'}$ for some $t'\in\mathbb{Z}$. 
    \begin{proof}
        Suppose that $\mathbb{S}^3_{W^\pm_t}(p/q)\cong\mathbb{S}^3_{\tilde{P}}(\tilde{p}/\tilde{q})$. Since $|\tilde{q}|\geq q_k\geq11$, Lemma \ref{lemma:bound} produces a constant $$V_k:= \Big(1-3\Big(\frac{2\pi}{q_k}\Big)^2\Big)^{-3/2} \cdot \vol(\mathbb{S}^3_{W^\pm_t})$$ such that $\vol(\mathbb{S}^3_{\tilde{P}}) < V_{k}$. We will show that for any 2-cusped orientable hyperbolic 3-manifold $M$ in the SnapPy census with $\vol(M)<V_k$, a careful analysis shows that $M$ cannot be homeomorphic to the complement of a hyperbolic link $\tilde{P} = \tilde{Q} \cup \tilde{U}$ with winding number $\tilde{w}=0$ unless $\tilde{P} = W^\pm_{t'}$ for some $t' \in \mathbb{Z}$. 
        
        Note that the values of $q_k$ in Table \ref{table:stages} are bounded both above and below: $q_k \leq 43$ ensures that $V_k$ is sufficiently larger than $\vol(\mathbb{S}^3_{W^\pm_t})$ to allow the possibility of other $M$ with $\vol(M)<V_k$; $q_k \geq 24$ implies that $V_k$ is strictly smaller than the volume of the 3-chain link complement, thus ensuring that there are at most finitely many $M$ with $\vol(M) < V_k$ to consider. 
    
        We proceed to work through the SnapPy census of 2-cusped orientable hyperbolic 3-manifolds in the stages described in Table \ref{table:stages}. Namely, for each stage $k$, if the list is complete up to the first $a_k$ items, then $q_k$ is selected to be precisely the least integer to produce a value of $V_k$ which lies just below the highest volume in this list, leaving finitely many manifolds of smaller volume to be systematically eliminated via one of our techniques. 
    
        First, we locate the manifolds which are obviously not link complements for homological reasons. The homology $H_1(\mathbb{S}^3_{\tilde{P}})\cong\mathbb{Z}\oplus\mathbb{Z}$ of a 2-component link complement $\mathbb{S}^3_{\tilde{P}}$ is freely generated by the homology classes of the two meridians, so anything with torsion can be immediately ruled out. It turns out that the only two to consider are $m412$ and $s596$, which both share a volume of $5.0747080321$ and first homology $\mathbb{Z} \oplus \mathbb{Z}/2\mathbb{Z}$. We discount these from all further discussion.  
    
        We would like to use Lemma \ref{lemma:winding} to show that none of the remaining manifolds can be homeomorphic to the complement of a link with winding number $\tilde{w}=0$. We can immediately rule out the candidates which are identified by SnapPy to be the complements of links with non-zero linking number. Moreover, those possessing a component with multiple distinct $\mathbb{S}^1 \times \mathbb{D}^2$ fillings can be eliminated by Lemma \ref{lemma:berge-gabai}: in fact, with respect to the homology basis used by SnapPy, the pair of fillings given by $(a_1,b_1)=(1,0)$ and $(\hat{a}_1,\hat{b}_1)=(0,1)$ works in each of these cases. This is all summarised in Table \ref{table:elimination}. 

        In general (just excluding $m412$ and $s596$), we can skip straight to the method of eliminating a manifold by Lemma \ref{lemma:free}. Each of the manifolds $M$ on the list satisfies $M((1,0),\ast)\cong\mathbb{S}^1\times\mathbb{D}^2$, which in turn has a unique filling slope $(a,b)$ that produces $M((1,0),(a,b))=\mathbb{S}^1\times\mathbb{S}^2$. It is then easy to check that $H_1(M((0,1), (a,b)))$ is finite. Table \ref{table:volumes} in the appendix contains $(a,b)$ for each $M$ on the list, which is ordered by volume and split into portions $L_k$ with volume between $V_{k-1}$ and $V_k$. 
        
        We are left with the Whitehead link complement, $m129$, as the only remaining option for $M \cong \mathbb{S}^3_{\tilde{P}}$. By Lemma \ref{lemma:twisted}, $\tilde{P}=W^\pm_{t'}$ for some $t'\in\mathbb{Z}$. 
    \end{proof}
\end{proposition} 

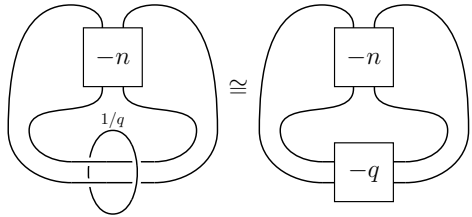
\begin{figure}[htbp!] 
    \centering 
    \resizebox{0.5\textwidth}{!}{\centering
\begin{tikzpicture} [squarednode/.style={rectangle, draw=black, fill=white, thick, minimum size=40pt}]
\begin{scope}[xshift=-3cm]
    \begin{knot}[end tolerance=1pt] 
    \flipcrossings{1,3} 
        \strand[white, double=black, thick, double distance=1pt] (-1,-1.25) 
        to [out=left, in=down] (-2.5,0) 
        to [out=up, in=left] (-1,3)
        to [out=right, in=up] (-0.25,2.5)
        to [out=down, in=up] (-0.25,1) 
        to [out=down, in=up] (-2,0)
        to [out=down, in=left] (-1,-0.75); 
        \strand[white, double=black, thick, double distance=1pt] (-1,-0.75) to [out=right, in=left] (1,-0.75); 
        \strand[white, double=black, thick, double distance=1pt] (1,-0.75) 
        to [out=right, in=down] (2,0)
        to [out=up, in=down] (0.25,1)
        to [out=up, in=down] (0.25,2.5) 
        to [out=up, in=left] (1,3) 
        to [out=right, in=up] (2.5,0) 
        to [out=down, in=right] (1,-1.25); 
        \strand[white, double=black, thick, double distance=1pt] (1,-1.25) to [out=left, in=right] (-1,-1.25); 
        \strand[white, double=black, thick, double distance=1pt] (0,0) 
        to [out=right, in=right] (0,-2) 
        to [out=left, in=left] (0,0); 
        \node[] at (0,0.25) {\large $1/q$}; 
    \end{knot} 
    \node[squarednode, fill=white, opacity=1, draw=black] at (0,1.75) {\Large $-n$}; 
\end{scope} 
\node[] at (0,1) {\Large $\cong$}; 
\begin{scope}[xshift=3cm]
    \begin{knot}[end tolerance=1pt] 
        \strand[white, double=black, thick, double distance=1pt] (-1,-1.25) 
        to [out=left, in=down] (-2.5,0) 
        to [out=up, in=left] (-1,3)
        to [out=right, in=up] (-0.25,2.5)
        to [out=down, in=up] (-0.25,1) 
        to [out=down, in=up] (-2,0)
        to [out=down, in=left] (-1,-0.75); 
        \strand[white, double=black, thick, double distance=1pt] (-1,-0.75) to [out=right, in=left] (1,-0.75); 
        \strand[white, double=black, thick, double distance=1pt] (1,-0.75) 
        to [out=right, in=down] (2,0)
        to [out=up, in=down] (0.25,1)
        to [out=up, in=down] (0.25,2.5) 
        to [out=up, in=left] (1,3) 
        to [out=right, in=up] (2.5,0) 
        to [out=down, in=right] (1,-1.25); 
        \strand[white, double=black, thick, double distance=1pt] (1,-1.25) to [out=left, in=right] (-1,-1.25); 
    \end{knot} 
    \node[squarednode, fill=white, opacity=1, draw=black] at (0,-1) {\Large $-q$}; 
    \node[squarednode, fill=white, opacity=1, draw=black] at (0,1.75) {\Large $-n$}; 
\end{scope}
\end{tikzpicture} } 
    \caption{Rolfsen $-q$-twist giving $\mathbb{S}^3_{W^n}(1/q) \cong \mathbb{S}^3_{T^n_q}$. }
    \label{figure:1/q} 
\end{figure} 

\begin{remark} 
    Note that we can also work in the opposite direction: starting from any volume bound $V_{\max}$, we can recover the corresponding lower bound on the denominator via the formula $$q_{\min} := \left\lceil 2\pi \sqrt{\frac{3}{1 - (V_{\max}^{-1} \cdot \vol(\mathbb{S}^3_{W^\pm_t}))^{2/3}}} \ \right\rceil. $$
\end{remark}

Finally, having successfully narrowed down our hyperbolic pattern to a twisted Whitehead link, \mbox{$\tilde{P}=W^\pm_{t'}$}, we proceed to complete the proof of Theorem \ref{theorem:refined}. 

\begin{proof}[Proof of Theorem \ref{theorem:refined}]
    By Proposition \ref{proposition:pattern}, we know that $K'=P'(J)$ is a satellite knot and that there is a slope-preserving homeomorphism $\mathbb{S}^3_{W^\pm_t}(p/q)\cong\mathbb{S}^3_{P'}(p/q)$, where $P'$ is either (i) a hyperbolic pattern itself or (ii) a cable $C_{r,s}(\hat{P})$ of a hyperbolic pattern $\hat{P}$. Let $\tilde{P}$ denote the hyperbolic pattern in each case and consider the slope-preserving homeomorphism \mbox{$\mathbb{S}^3_{W^\pm_t}(p/q)\cong\mathbb{S}^3_{\tilde{P}}(\tilde{p}/\tilde{q})$}. We have just shown in Proposition \ref{proposition:elimination} that $\tilde{P}=W^\pm_{t'}$ for some $t'\in\mathbb{Z}$. In case (ii), where $(\tilde{P},\tilde{p}/\tilde{q})=(\hat{P},p/qs^2)$, we have $P' = C_{\pm1,s}(W^\pm_t)$ by Lemma \ref{lemma:cabled_twisted}; however, we can rule this out by Corollary \ref{corollary:csc}. In case (i), where $(\tilde{P},\tilde{p}/\tilde{q})=(P',p/q)$, Lemma \ref{lemma:knotting} tells us that $V^\pm_t=V^\pm_{t'}$, so $t=t'$ and $K=K'$, as required.  
\end{proof}

\section{Non-characterising slopes} 
\label{section:non-characterising} 

In this final section, we construct the non-characterising slopes that are described in Theorem \ref{theorem:non-characterising} and depicted in Figure \ref{figure:$W^n(T^m_q)$}.  

\begin{theorem:non-characterising} 
    Let $K = W^n(T^m_q)$ and $K' = W^m(T^n_q)$ for any choices of $q,m,n\in\mathbb{Z}\setminus\{0\}$. 

    Then there exists an orientation-preserving homeomorphism $\mathbb{S}^3_K(1/q)\cong\mathbb{S}^3_{K'}(1/q)$. 

    In particular, when $m \neq n$, $1/q$ is a non-characterising slope for both $K$ and $K'$. 
\end{theorem:non-characterising} 

We employ Brakes' strategy of gluing together a pair of knot complements to construct pairs of knots which share a common surgery \cite{brakes}. This demonstrates precisely why we require the $|p|\neq1$ condition in Theorem \ref{theorem:refined}.

\subsection{Construction} 

Let $W^n = V^n \cup U$ be the $n$-clasped untwisted Whitehead link. We prohibit any extra twisting of $V^n$ inside the solid torus $\mathbb{S}^3_U$ because we require our pattern to be unknotted in order to carry out the upcoming construction. Recall that $W^n$ has winding number $w=0$ and that the two unknotted components can be exchanged by an isotopy. We clearly require $n\neq0$ in order for this pattern to be non-trivial. 

Let $T^m_q$ denote the $(m,q)$-double twist knot. We take $m\neq0$ and $q\neq0$ in order to ensure that $T^m_q$ is not the unknot. Note that $T^m_q = T^q_m$. Also, observe that by the Rolfsen $-q$-twist depicted in Figure \ref{figure:1/q}, $T^m_q$ can be obtained by performing $1/q$-surgery on one component of the $m$-clasped Whitehead link; it does not matter which component, since they are both unknotted and can be exchanged by an isotopy. 

Now, by applying an $n$-clasped Whitehead link pattern $W^n$ to an $(m,q)$-double twist knot $T^m_q$, we can construct an infinite family of pairs of satellite knots sharing a $1/q$-surgery. The idea is that, since filling the pattern piece $\mathbb{S}^3_{W^n}$ with slope $1/q$ produces an $(n,q)$-double twist knot complement, $\mathbb{S}^3_{W^n}(1/q)\cong\mathbb{S}^3_{T^n_q}$, it is possible for a homeomorphism between surgeries to switch the JSJ pieces. 

As a special case, we generate the following infinite family of pairs of true Whitehead doubles of true twist knots which each share a non-characterising slope of the form $1/q$. 

\begin{corollary} 
\label{corollary: non-characterising}
    Let $K_q=W^+(T^-_q)$ and $K'_q=W^-(T^+_q)$ for any choice of $q\in\mathbb{Z}\setminus\{0\}$. 

    Then there exists an orientation-preserving homeomorphism $\mathbb{S}^3_{K_q}(1/q)\cong\mathbb{S}^3_{K'_q}(1/q)$. 

    In particular, $1/q$ is a non-characterising slope for both $K_q$ and $K'_q$. 
\end{corollary}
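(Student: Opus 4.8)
The plan is to obtain this corollary as the immediate specialisation of Theorem \ref{theorem:non-characterising} to the single-clasped case, recalling the shorthand convention introduced in the preliminaries that $W^\pm$ denotes the $\pm1$-clasped untwisted Whitehead link pattern $W^{\pm1}$ and that $T^\pm_q$ denotes the $\pm1$-clasped twist knot $T^{\pm1}_q$. Under these conventions, the pair $(K_q, K'_q)$ is exactly the pair $(K, K')$ of the theorem for the particular choice of clasp parameters $n = +1$ and $m = -1$.

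Concretely, I would substitute $n = +1$ and $m = -1$ into Theorem \ref{theorem:non-characterising}, so that
$$K = W^n(T^m_q) = W^{+1}(T^{-1}_q) = W^+(T^-_q) = K_q, \qquad K' = W^m(T^n_q) = W^{-1}(T^{+1}_q) = W^-(T^+_q) = K'_q.$$
The homeomorphism $\mathbb{S}^3_K(1/q) \cong \mathbb{S}^3_{K'}(1/q)$ furnished by the theorem is then precisely the claimed orientation-preserving homeomorphism $\mathbb{S}^3_{K_q}(1/q) \cong \mathbb{S}^3_{K'_q}(1/q)$. Since $q \neq 0$ and the clasp parameters $\pm1 \neq 0$, both companions $T^-_q$ and $T^+_q$ are genuinely knotted twist knots (not the unknot), so $K_q$ and $K'_q$ are bona fide single-clasped Whitehead doubles of bona fide twist knots, as the name suggests.

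Finally, because $m = -1 \neq +1 = n$, the hypothesis $m \neq n$ of the ``in particular'' clause of Theorem \ref{theorem:non-characterising} is automatically satisfied, and that clause delivers the non-characterising conclusion for both $K_q$ and $K'_q$ (in particular it is the clause that guarantees $K_q \neq K'_q$). There is no genuine obstacle beyond the theorem itself, as all of the substantive work — the Brakes-style gluing of knot complements and the verification that the two knots are distinct when the clasp parameters differ — is carried out in its proof. The only point requiring a moment's care is bookkeeping: confirming that the $\pm1$ shorthand makes $(K_q, K'_q)$ coincide with $(K, K')$ under the substitution $n = +1$, $m = -1$, so that the corollary is a literal instance of the theorem rather than merely an analogue of it.
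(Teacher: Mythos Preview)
Your proposal is correct and matches the paper's own proof, which likewise just substitutes $n=+1$ and $m=-1$ into Theorem \ref{theorem:non-characterising}. The extra bookkeeping you spell out (unpacking the $\pm$ shorthand and noting that $m\neq n$ ensures distinctness) is accurate but not strictly needed, as the paper simply records the substitution in one line.
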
 

Furthermore, when $q=\pm1$, we obtain the integer non-characterising slopes $\pm1$ and rediscover an example given in \cite{brakes}. The following pairs consist of orientation-preserving homeomorphic fillings which both admit JSJ decompositions into a trefoil knot complement and a figure eight knot complement, glued together along the central JSJ torus by an orientation-reversing map which exchanges meridians and longitudes. Note that these examples are mirrors of one another. 

\begin{example}
    Let $K_{\pm}=W^+(T^-_{\pm})$ and $K'_{\pm}=W^-(T^+_{\pm})$. 
    
    Then there exists an orientation-preserving homeomorphism $\mathbb{S}^3_{K_{\pm}}(\pm1)\cong\mathbb{S}^3_{K'_{\pm}}(\pm1)$. 

    In particular, $\pm1$ is a non-characterising slope for both $K_{\pm}$ and $K'_{\pm}$. 
\end{example} 

Figure \ref{figure:examples} shows the knots $K_{\pm}$ and $K'_{\pm}$.

\subsection{Gluing maps} 

In order to compare JSJ decompositions, it is crucial to consider gluing maps as well as JSJ pieces.

Let $K=W^n(T^m_q)$. We will carefully analyse exactly what the JSJ decomposition of $\mathbb{S}^3_K(1/q)$ looks like. We already know that there are precisely two JSJ pieces, which each take the form of a double twist knot complement: the companion piece $\mathbb{S}^3_{T^m_q}$ and the filled pattern piece $\mathbb{S}^3_{W^n}(1/q)\cong\mathbb{S}^3_{T^n_q}$. Note that if we take $|q|\geq2$, then these are both hyperbolic. 

To describe the gluing map, we use the satellite construction to compare how the meridian and longitude of both of the double twist knots interact on the JSJ torus along which their complements meet. In particular, we show that the gluing map identifies these in pairs, and that each pair represents a nullhomologous curve on either side of the JSJ torus. 

Consider the $n$-clasped untwisted Whitehead link pattern, $W^n = V^n \cup U$. We can view the component $V^n \subset \mathbb{S}^3_U \cong \mathbb{S}^1 \times \mathbb{D}^2$ as sitting inside a solid torus with meridian $\mu$ and longitude $\lambda$. When we form the $n$-clasped Whitehead double of the companion $(m,q)$-double twist knot $T^m_q$, we make the following identifications: 
\begin{align*}
\mu = \lambda_U = \mu_{T^m_q}, \\
\lambda = \mu_U = \lambda_{T^m_q}. 
\end{align*}

The complement of $K$ can now be expressed as $\mathbb{S}^3_K \cong \mathbb{S}^3_{T^m_q} \cup_{\mathbb{T}^2} \mathbb{S}^3_{W^n}$, where the union along the central JSJ torus $\mathbb{T}^2$ is specified by these identifications. 

Now let's perform $1/q$-surgery on $K$. Filling the pattern piece gives $\mathbb{S}^3_{W^n}(1/q)\cong\mathbb{S}^3_{T^n_q}$: we can see this by performing an isotopy so that the component labelled $U$ is now sitting inside the solid torus $\mathbb{S}^3_{V^n} \cong \mathbb{S}^1 \times \mathbb{D}^2$, and then viewing $1/q$-surgery along $V^n$ as applying a Rolfsen $-q$-twist to $U$ inside $\mathbb{S}^3_{V^n}$, as shown in Figure \ref{figure:1/q}. This transforms $U$ into the $(n,q)$-double twist knot $T^n_q$. In particular, observe that we now have the following identifications: 
\begin{align*}
\mu_{T^m_q} = \lambda_U=\lambda_{T^n_q}, \\
\lambda_{T^m_q} = \mu_U=\mu_{T^n_q}. 
\end{align*}

In other words, our gluing map exchanges meridians and longitudes. Note that each of these is nullhomologous on one side of the JSJ torus: the curve $\mu_{T^m_q}=\lambda_{T^n_q}$ on the pattern side by the $w=0$ case of Lemma \ref{lemma:gordon}, and the curve $\lambda_{T^m_q}=\mu_{T^n_q}$ on the companion side since it bounds a Seifert surface.

\subsection{Proof of Theorem \ref{theorem:non-characterising}} 

\begin{proof}[Proof of Theorem \ref{theorem:non-characterising}]
    Let $K=W^n(T^m_q)$ and $K'=W^m(T^n_q)$. Clearly, both $\mathbb{S}^3_K(1/q)$ and $\mathbb{S}^3_{K'}(1/q)$ admit a JSJ decomposition into two double twist knot complements, $\mathbb{S}^3_{T^m_q}$ and $\mathbb{S}^3_{T^n_q}$. By the reasoning above, the gluing map in both cases is given by the identifications $\mu_{T^m_q}=\lambda_{T^n_q}$ and $\lambda_{T^m_q}=\mu_{T^n_q}$. Therefore $\mathbb{S}^3_K(1/q) \cong \mathbb{S}^3_{K'}(1/q)$, and $K\neq K'$ by the geometry of their complements. 
\end{proof}

\begin{proof}[Proof of Corollary \ref{corollary: non-characterising}]
    Simply set $n=+1$ and $m=-1$. The result holds for all $q\in\mathbb{Z}\setminus\{0\}$.
\end{proof} 

\begin{figure}[htbp!]
    \centering 
    \begin{subfigure}{\textwidth} 
        \centering 
        \resizebox{0.6\textwidth}{!}{\centering
\begin{tikzpicture} [squarednode/.style={rectangle, draw=black, fill=white, thick, minimum size=40pt}]
    \begin{knot}[end tolerance=1pt] 
        \strand[black, double=white, thick, double distance=4pt] (-1,-1) 
        to [out=left, in=down] (-2,0.25);  
        \strand[black, thick] (-2.085,0.25) 
        to [out=up, in=down] (-2.6,1)
        to [out=up, in=left] (-2.3,1.15); 
        \strand[black, thick] (-2.3,1.35) 
        to [out=left, in=down] (-2.6,1.5) 
        to [out=up, in=down] (-2.085,2.25); 
        \strand[black, thick] (-1.915,0.25) 
        to [out=up, in=down] (-1.4,1)
        to [out=up, in=right] (-1.7,1.15); 
        \strand[black, thick] (-1.7,1.35) 
        to [out=right, in=down] (-1.4,1.5) 
        to [out=up, in=down] (-1.915,2.25);         
        \strand[black, double=white, thick, double distance=4pt] (-2,2.25) 
        to [out=up, in=left] (-1,3)
        to [out=right, in=up] (-0.25,2.5)
        to [out=down, in=up] (-0.25,1) 
        to [out=down, in=up] (-1.5,0)
        to [out=down, in=left] (-1,-0.5); 
        \strand[black, double=white, thick, double distance=4pt] (-1,-0.5) 
        to [out=right, in=left] (1,-0.5); 
        \strand[black, double=white, thick, double distance=4pt] (1,-0.5) 
        to [out=right, in=down] (1.5,0)
        to [out=up, in=down] (0.25,1)
        to [out=up, in=down] (0.25,2.5) 
        to [out=up, in=left] (1,3) 
        to [out=right, in=up] (2,2) 
        to [out=down, in=up] (2,1.6)
        to [out=down, in=up] (2,0.9) 
        to [out=down, in=up] (2,0) 
        to [out=down, in=right] (1,-1); 
        \strand[black, double=white, thick, double distance=4pt] (1,-1) 
        to [out=left, in=right] (-1,-1); 
    \end{knot} 
    \node[squarednode, fill=white, opacity=1] at (0,-0.75) {\Large $-1$}; 
    \node[squarednode, fill=white, opacity=1] at (0,1.75) {\Large $+1$}; 
    \node[rectangle, draw=black, fill=white, thick, minimum size=20pt] at (-2,1.25) {$-1$}; 
    \node[rectangle, draw=black, fill=white, fill opacity=1, thick, minimum size=20pt] at (2,1.25) {$0$}; 
\end{tikzpicture} 
\qquad 
\begin{tikzpicture} [squarednode/.style={rectangle, draw=black, fill=white, thick, minimum size=40pt}]
    \begin{knot}[end tolerance=1pt] 
        \strand[black, double=white, thick, double distance=4pt] (-1,-1) 
        to [out=left, in=down] (-2,0.25);  
        \strand[black, thick] (-2.085,0.25) 
        to [out=up, in=down] (-2.6,1)
        to [out=up, in=left] (-2.3,1.15); 
        \strand[black, thick] (-2.3,1.35) 
        to [out=left, in=down] (-2.6,1.5) 
        to [out=up, in=down] (-2.085,2.25); 
        \strand[black, thick] (-1.915,0.25) 
        to [out=up, in=down] (-1.4,1)
        to [out=up, in=right] (-1.7,1.15); 
        \strand[black, thick] (-1.7,1.35) 
        to [out=right, in=down] (-1.4,1.5) 
        to [out=up, in=down] (-1.915,2.25);         
        \strand[black, double=white, thick, double distance=4pt] (-2,2.25) 
        to [out=up, in=left] (-1,3)
        to [out=right, in=up] (-0.25,2.5)
        to [out=down, in=up] (-0.25,1) 
        to [out=down, in=up] (-1.5,0)
        to [out=down, in=left] (-1,-0.5); 
        \strand[black, double=white, thick, double distance=4pt] (-1,-0.5) 
        to [out=right, in=left] (1,-0.5); 
        \strand[black, double=white, thick, double distance=4pt] (1,-0.5) 
        to [out=right, in=down] (1.5,0)
        to [out=up, in=down] (0.25,1)
        to [out=up, in=down] (0.25,2.5) 
        to [out=up, in=left] (1,3) 
        to [out=right, in=up] (2,2) 
        to [out=down, in=up] (2,1.6)
        to [out=down, in=up] (2,0.9) 
        to [out=down, in=up] (2,0) 
        to [out=down, in=right] (1,-1); 
        \strand[black, double=white, thick, double distance=4pt] (1,-1) 
        to [out=left, in=right] (-1,-1); 
    \end{knot} 
    \node[squarednode, fill=white, opacity=1] at (0,-0.75) {\Large $-1$}; 
    \node[squarednode, fill=white, opacity=1] at (0,1.75) {\Large $-1$}; 
    \node[rectangle, draw=black, fill=white, thick, minimum size=20pt] at (-2,1.25) {$+1$}; 
    \node[rectangle, draw=black, fill=white, fill opacity=1, thick, minimum size=20pt] at (2,1.25) {$-4$}; 
\end{tikzpicture} }
        \caption{A pair of knots, $K_+=W^+(S)$ and $K'_+=W^-(+T)$, with $\mathbb{S}^3_{K_+}(+1) \cong \mathbb{S}^3_{K'_+}(+1)$. }
        \label{subfigure:+example} 
    \end{subfigure}
    \begin{subfigure}{\textwidth} 
        \centering 
        \resizebox{0.6\textwidth}{!}{\centering
\begin{tikzpicture} [squarednode/.style={rectangle, draw=black, fill=white, thick, minimum size=40pt}]
    \begin{knot}[end tolerance=1pt] 
        \strand[black, double=white, thick, double distance=4pt] (-1,-1) 
        to [out=left, in=down] (-2,0.25);  
        \strand[black, thick] (-2.085,0.25) 
        to [out=up, in=down] (-2.6,1)
        to [out=up, in=left] (-2.3,1.15); 
        \strand[black, thick] (-2.3,1.35) 
        to [out=left, in=down] (-2.6,1.5) 
        to [out=up, in=down] (-2.085,2.25); 
        \strand[black, thick] (-1.915,0.25) 
        to [out=up, in=down] (-1.4,1)
        to [out=up, in=right] (-1.7,1.15); 
        \strand[black, thick] (-1.7,1.35) 
        to [out=right, in=down] (-1.4,1.5) 
        to [out=up, in=down] (-1.915,2.25);         
        \strand[black, double=white, thick, double distance=4pt] (-2,2.25) 
        to [out=up, in=left] (-1,3)
        to [out=right, in=up] (-0.25,2.5)
        to [out=down, in=up] (-0.25,1) 
        to [out=down, in=up] (-1.5,0)
        to [out=down, in=left] (-1,-0.5); 
        \strand[black, double=white, thick, double distance=4pt] (-1,-0.5) 
        to [out=right, in=left] (1,-0.5); 
        \strand[black, double=white, thick, double distance=4pt] (1,-0.5) 
        to [out=right, in=down] (1.5,0)
        to [out=up, in=down] (0.25,1)
        to [out=up, in=down] (0.25,2.5) 
        to [out=up, in=left] (1,3) 
        to [out=right, in=up] (2,2) 
        to [out=down, in=up] (2,1.6)
        to [out=down, in=up] (2,0.9) 
        to [out=down, in=up] (2,0) 
        to [out=down, in=right] (1,-1); 
        \strand[black, double=white, thick, double distance=4pt] (1,-1) 
        to [out=left, in=right] (-1,-1); 
    \end{knot} 
    \node[squarednode, fill=white, opacity=1] at (0,-0.75) {\Large $+1$}; 
    \node[squarednode, fill=white, opacity=1] at (0,1.75) {\Large $+1$}; 
    \node[rectangle, draw=black, fill=white, thick, minimum size=20pt] at (-2,1.25) {$-1$}; 
    \node[rectangle, draw=black, fill=white, fill opacity=1, thick, minimum size=20pt] at (2,1.25) {$+4$}; 
\end{tikzpicture} 
\qquad 
\begin{tikzpicture} [squarednode/.style={rectangle, draw=black, fill=white, thick, minimum size=40pt}]
    \begin{knot}[end tolerance=1pt] 
        \strand[black, double=white, thick, double distance=4pt] (-1,-1) 
        to [out=left, in=down] (-2,0.25);  
        \strand[black, thick] (-2.085,0.25) 
        to [out=up, in=down] (-2.6,1)
        to [out=up, in=left] (-2.3,1.15); 
        \strand[black, thick] (-2.3,1.35) 
        to [out=left, in=down] (-2.6,1.5) 
        to [out=up, in=down] (-2.085,2.25); 
        \strand[black, thick] (-1.915,0.25) 
        to [out=up, in=down] (-1.4,1)
        to [out=up, in=right] (-1.7,1.15); 
        \strand[black, thick] (-1.7,1.35) 
        to [out=right, in=down] (-1.4,1.5) 
        to [out=up, in=down] (-1.915,2.25);         
        \strand[black, double=white, thick, double distance=4pt] (-2,2.25) 
        to [out=up, in=left] (-1,3)
        to [out=right, in=up] (-0.25,2.5)
        to [out=down, in=up] (-0.25,1) 
        to [out=down, in=up] (-1.5,0)
        to [out=down, in=left] (-1,-0.5); 
        \strand[black, double=white, thick, double distance=4pt] (-1,-0.5) 
        to [out=right, in=left] (1,-0.5); 
        \strand[black, double=white, thick, double distance=4pt] (1,-0.5) 
        to [out=right, in=down] (1.5,0)
        to [out=up, in=down] (0.25,1)
        to [out=up, in=down] (0.25,2.5) 
        to [out=up, in=left] (1,3) 
        to [out=right, in=up] (2,2) 
        to [out=down, in=up] (2,1.6)
        to [out=down, in=up] (2,0.9) 
        to [out=down, in=up] (2,0) 
        to [out=down, in=right] (1,-1); 
        \strand[black, double=white, thick, double distance=4pt] (1,-1) 
        to [out=left, in=right] (-1,-1); 
    \end{knot} 
    \node[squarednode, fill=white, opacity=1] at (0,-0.75) {\Large $+1$}; 
    \node[squarednode, fill=white, opacity=1] at (0,1.75) {\Large $-1$}; 
    \node[rectangle, draw=black, fill=white, thick, minimum size=20pt] at (-2,1.25) {$+1$}; 
    \node[rectangle, draw=black, fill=white, fill opacity=1, thick, minimum size=20pt] at (2,1.25) {$0$}; 
\end{tikzpicture} }
        \caption{A pair of knots, $K_-=W^+(-T)$ and $K'_-=W^-(S)$, with $\mathbb{S}^3_{K_-}(-1) \cong \mathbb{S}^3_{K'_-}(-1)$. }
        \label{subfigure:-example}         
    \end{subfigure} 
    \caption{Pairs of knots, $K_\pm$ and $K'_\pm$, with $\mathbb{S}^3_{K_{\pm}}(\pm1) \cong \mathbb{S}^3_{K'_{\pm}}(\pm1)$.}
    \label{figure:examples} 
\end{figure}
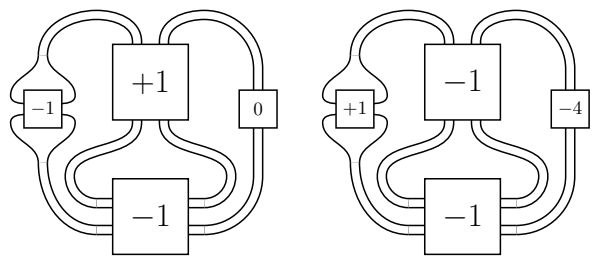
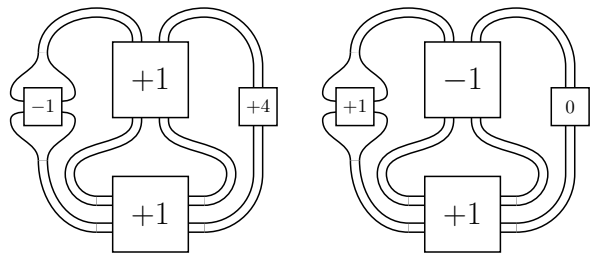

\printbibliography

\appendix

\newpage

\section{Minimal geodesics} 

In Table \ref{table:twist_lengths}, we give the values of $\mathfrak{q}(\mathbb{S}^3_K)$ obtained using the minimal geodesics method when $K=T^\pm_t$ is a $\pm1$-clasped $t$-twist knot for $|t|\leq34$. Since we have shown that $\Lambda^\pm_t<0.14$ for $|t|\geq35$, this completes the proof of Corollary \ref{corollary:new}. 

\begin{table}[htbp!] 
\centering 
    \begin{subtable}[htbp!]{0.49\textwidth}
    \centering 
        \begin{tabular}{|c|c|c|} \hline 
            $t$ & $\Lambda^\pm_t = \sys(\mathbb{S}^3_{T^\pm_t})$ & $\mathfrak{q}(\mathbb{S}^3_{T^\pm_t})$ \\ \hline \hline  
            $\mp1$ & $1.087070144996$ & $21$ \\ \hline 
            $\mp2$ & $0.330635521631$ & $27$ \\ \hline 
            $\mp3$ & $0.153578692788$ & $34$ \\ \hline \hline 
            $\mp4$ & $0.088638567733$ & $42$ \\ \hline 
            $\mp5$ & $0.057710538310$ & $51$ \\ \hline 
            $\mp6$ & $0.040571176947$ & $60$ \\ \hline 
            $\mp7$ & $0.030082564226$ & $68$ \\ \hline 
            $\mp8$ & $0.023196961993$ & $77$ \\ \hline 
            $\mp9$ & $0.018433283838$ & $86$ \\ \hline 
            $\mp10$ & $0.015000689183$ & $95$ \\ \hline 
            $\mp11$ & $0.012445426919$ & $104$ \\ \hline 
            $\mp12$ & $0.010491939188$ & $113$ \\ \hline 
            $\mp13$ & $0.008964992283$ & $122$ \\ \hline 
            $\mp14$ & $0.007748815892$ & $131$ \\ \hline 
            $\mp15$ & $0.006764425705$ & $140$ \\ \hline 
            $\mp16$ & $0.005956433725$ & $149$ \\ \hline 
            $\mp17$ & $0.005285065287$ & $158$ \\ \hline 
            $\mp18$ & $0.004721159018$ & $167$ \\ \hline 
            $\mp19$ & $0.004242940361$ & $176$ \\ \hline 
            $\mp20$ & $0.003833884329$ & $185$ \\ \hline 
            $\mp21$ & $0.003481266543$ & $194$ \\ \hline 
            $\mp22$ & $0.003175160372$ & $203$ \\ \hline 
            $\mp23$ & $0.002907729811$ & $212$ \\ \hline 
            $\mp24$ & $0.002672722444$ & $221$ \\ \hline 
            $\mp25$ & $0.002465100316$ & $230$ \\ \hline 
            $\mp26$ & $0.002280767426$ & $239$ \\ \hline 
            $\mp27$ & $0.002116366001$ & $248$ \\ \hline 
            $\mp28$ & $0.001969122376$ & $257$ \\ \hline 
            $\mp29$ & $0.001836729164$ & $266$ \\ \hline 
            $\mp30$ & $0.001717254246$ & $275$ \\ \hline 
            $\mp31$ & $0.001609069838$ & $284$ \\ \hline 
            $\mp32$ & $0.001510796722$ & $293$ \\ \hline 
            $\mp33$ & $0.001421260048$ & $303$ \\ \hline 
            $\mp34$ & $0.001339454037$ & $312$ \\ \hline             
        \end{tabular} 
    \end{subtable} 
    \begin{subtable}[htbp!]{0.49\textwidth}
    \centering 
        \begin{tabular}{|c|c|c|} \hline 
            $t$ & $\Lambda^\pm_t = \sys(\mathbb{S}^3_{T^\pm_t})$ & $\mathfrak{q}(\mathbb{S}^3_{T^\pm_t})$ \\ \hline \hline 
            $\pm1$ & $-$ & $-$ \\ \hline 
            $\pm2$ & $0.562399148646$ & $23$ \\ \hline 
            $\pm3$ & $0.217101464988$ & $30$ \\ \hline \hline 
            $\pm4$ & $0.114457182528$ & $38$ \\ \hline 
            $\pm5$ & $0.070693861571$ & $47$ \\ \hline 
            $\pm6$ & $0.048010614991$ & $55$ \\ \hline 
            $\pm7$ & $0.034739404196$ & $64$ \\ \hline 
            $\pm8$ & $0.026304508293$ & $73$ \\ \hline 
            $\pm9$ & $0.020609961033$ & $82$ \\ \hline 
            $\pm10$ & $0.016584439142$ & $90$ \\ \hline 
            $\pm11$ & $0.013633629114$ & $99$ \\ \hline 
            $\pm12$ & $0.011406166569$ & $108$ \\ \hline 
            $\pm13$ & $0.009683450721$ & $117$ \\ \hline 
            $\pm14$ & $0.008323669024$ & $126$ \\ \hline 
            $\pm15$ & $0.007231550219$ & $135$ \\ \hline 
            $\pm16$ & $0.006341162326$ & $144$ \\ \hline 
            $\pm17$ & $0.005605698492$ & $153$ \\ \hline 
            $\pm18$ & $0.004991184252$ & $162$ \\ \hline 
            $\pm19$ & $0.004472474961$ & $171$ \\ \hline 
            $\pm20$ & $0.004030637877$ & $180$ \\ \hline 
            $\pm21$ & $0.003651197189$ & $189$ \\ \hline 
            $\pm22$ & $0.003322931515$ & $198$ \\ \hline 
            $\pm23$ & $0.003037033650$ & $208$ \\ \hline 
            $\pm24$ & $0.002786512978$ & $217$ \\ \hline 
            $\pm25$ & $0.002565763633$ & $226$ \\ \hline 
            $\pm26$ & $0.002370247869$ & $235$ \\ \hline 
            $\pm27$ & $0.002196260793$ & $244$ \\ \hline 
            $\pm28$ & $0.002040753425$ & $253$ \\ \hline 
            $\pm29$ & $0.001901198105$ & $262$ \\ \hline 
            $\pm30$ & $0.001775485070$ & $271$ \\ \hline 
            $\pm31$ & $0.001661842188$ & $280$ \\ \hline 
            $\pm32$ & $0.001558772135$ & $289$ \\ \hline 
            $\pm33$ & $0.001465002792$ & $298$ \\ \hline 
            $\pm34$ & $0.001379447782$ & $307$ \\ \hline     
        \end{tabular}   
    \end{subtable} 
\caption{The length of a minimal geodesic in $\mathbb{S}^3_{T^\pm_t}$ and $\mathfrak{q}(\mathbb{S}^3_{T^\pm_t})$ for $|t|\leq34$. } 
\label{table:twist_lengths} 
\end{table} 

\newpage 

In Table \ref{table:double_lengths}, we list the values of $\mathfrak{q}(\mathbb{S}^3_{W^n})$ contributing to our bounds for all $n$-clasped $t$-twisted Whitehead doubles $K=W^n_t(J)$ with $|n|\leq34$ (note: this is independent of $t$ and $J$). Since we have shown that $\Lambda^{(n)}<0.14$ for $|n|\geq35$, this completes the proof of Corollary \ref{corollary:main}. 

\begin{table}[htbp!] 
\centering 
    \begin{tabular}{|c|c|c|} \hline 
    \centering 
        $|n|$ & $\Lambda^{(n)} = \sys(\mathbb{S}^3_{W^n})$ & $\mathfrak{q}(\mathbb{S}^3_{W^n_t})$ \\ \hline \hline 
            $1$ & $1.061275061905$ & $21$ \\ \hline 
            $2$ & $0.773114038508$ & $22$ \\ \hline 
            $3$ & $0.351756919644$ & $26$ \\ \hline 
            $4$ & $0.197729369929$ & $31$ \\ \hline \hline 
            $5$ & $0.126321972231$ & $37$ \\ \hline 
            $6$ & $0.087607243698$ & $43$ \\ \hline 
            $7$ & $0.064305470069$ & $49$ \\ \hline 
            $8$ & $0.049202303775$ & $55$ \\ \hline 
            $9$ & $0.038858005544$ & $61$ \\ \hline 
            $10$ & $0.031464306931$ & $67$ \\ \hline 
            $11$ & $0.025996893021$ & $73$ \\ \hline 
            $12$ & $0.021840285802$ & $79$ \\ \hline 
            $13$ & $0.018606572877$ & $86$ \\ \hline 
            $14$ & $0.016041419853$ & $92$ \\ \hline 
            $15$ & $0.013972441787$ & $98$ \\ \hline 
            $16$ & $0.012279441187$ & $105$ \\ \hline 
            $17$ & $0.010876534341$ & $111$ \\ \hline 
            $18$ & $0.009701032735$ & $117$ \\ \hline 
            $19$ & $0.008706311759$ & $124$ \\ \hline 
            $20$ & $0.007857112359$ & $130$ \\ \hline 
            $21$ & $0.007126371214$ & $136$ \\ \hline 
            $22$ & $0.006493036413$ & $143$ \\ \hline 
            $23$ & $0.005940533385$ & $149$ \\ \hline 
            $24$ & $0.005455668899$ & $155$ \\ \hline 
            $25$ & $0.005027835778$ & $162$ \\ \hline 
            $26$ & $0.004648427592$ & $168$ \\ \hline 
            $27$ & $0.004310402283$ & $175$ \\ \hline 
            $28$ & $0.004007952923$ & $181$ \\ \hline 
            $29$ & $0.003736256576$ & $187$ \\ \hline 
            $30$ & $0.003491280776$ & $194$ \\ \hline 
            $31$ & $0.003269633001$ & $200$ \\ \hline 
            $32$ & $0.003068442554$ & $206$ \\ \hline 
            $33$ & $0.002885267124$ & $213$ \\ \hline 
            $34$ & $0.002718018298$ & $219$ \\ \hline 
    \end{tabular} 
\caption{The length of a minimal geodesic in $\mathbb{S}^3_{W^n}$ and $\mathfrak{q}(\mathbb{S}^3_{W^n})$ for $|n|\leq34$. } 
\label{table:double_lengths} 
\end{table}

\newpage 

\section{Minimal volumes}

Table \ref{table:volumes} contains the smallest 2-cusped orientable hyperbolic 3-manifolds from the SnapPy census, grouped by triangulation complexity and then ordered by increasing volume. For each $M$, we also include $(a,b)$ for which $H_1(M((0,1),(a,b)))$ is finite. 

\begin{table*}[htbp!] 
\centering 
\begin{multicols}{2} 
\centering 
    \begin{tabular}[t]{|c|r|l|c|}  
        \hline 
        & $M$ & $\vol(M)$ & $(a,b)$ \\ 
        \hline \hline 
        $L_1$ & $ m129 $ & $ 3.6638623767 $ & \\ 
        & $ m125 $ & $ 3.6638623767 $ & $ (4, 3) $ \\ \hline \hline 
        $L_2$ & $ m202 $ & $ 4.0597664256 $ & $ (5, 3) $ \\
        & $ m203 $ & $ 4.0597664256 $ & $ (0, 1) $ \\ \hline \hline 
        $L_3$ & $ m292 $ & $ 4.4153324775 $ & $ (-4, 5) $ \\
        & $ m295 $ & $ 4.4153324775 $ & $ (3, 2) $ \\ \hline \hline 
        $L_4$ & $ m328 $ & $ 4.5559188899 $ & $ (5, 4) $ \\
        & $ m329 $ & $ 4.5559188899 $ & $ (7, 5) $ \\ \hline \hline 
        $L_5$ & $ m357 $ & $ 4.7254015851 $ & $ (11, 7) $ \\
        & $ m359 $ & $ 4.7254015851 $ & $ (1, 2) $ \\ 
        & $ m366 $ & $ 4.7494999819 $ & $ (-5, 7) $ \\
        & $ m367 $ & $ 4.7494999819 $ & $ (3, 1) $ \\ 
        & $ s441 $ & $ 4.7517019655 $ & $ (5, 6) $ \\
        & $ s443 $ & $ 4.7517019655 $ & $ (-4, 3) $ \\ \hline \hline 
        $L_6$ & $ m388 $ & $ 4.8511707573 $ & $ (5, 8) $ \\
        & $ m391 $ & $ 4.8511707573 $ & $ (1, 1) $ \\ \hline \hline 
        $L_7$ & $ s503 $ & $ 4.8937641326 $ & $ (-10, 7) $ \\
        & $ s506 $ & $ 4.8937641326 $ & $ (-6, 5) $ \\ 
        & $ v1060 $ & $ 4.9327140585 $ & $ (5, 4) $ \\
        & $ v1061 $ & $ 4.9327140585 $ & $ (6, 7) $ \\ 
        & $ s548 $ & $ 4.9767702943 $ & $ (-14, 9) $ \\
        & $ s549 $ & $ 4.9767702943 $ & $ (2, 3) $ \\ \hline 
    \end{tabular} 
\newline 
\vfill 
\columnbreak 
    \begin{tabular}[t]{|c|r|l|c|} 
        \hline 
        & $M$ & $\vol(M)$ & $(a,b)$ \\ 
        \hline \hline 
        $L_8$ & $ s568 $ & $ 5.0294944813 $ & $ (5, 3) $ \\
        & $ s569 $ & $ 5.0294944813 $ & $ (-7, 9) $ \\ 
        & $ t02501 $ & $ 5.0411812564 $ & $ (-6, 5) $ \\
        & $ t02502 $ & $ 5.0411812564 $ & $ (7, 8) $ \\ 
        & $ s576 $ & $ 5.0425492156 $ & $ (-11, 8) $ \\
        & $ s577 $ & $ 5.0425492156 $ & $ (9, 7) $ \\ 
        & $ s578 $ & $ 5.0448991629 $ & $ (5, 1) $ \\
        & $ s579 $ & $ 5.0448991629 $ & $ (-7, 10) $ \\ 
        & $ v1178 $ & $ 5.0533214945 $ & $ (-7, 6) $ \\
        & $ v1180 $ & $ 5.0533214945 $ & $ (13, 9) $ \\ 
        & $ m412 $ & $5.0747080321$ & --- \\ 
        & $ s596 $ & $ 5.0747080321 $ & --- \\ 
        & $ s601 $ & $ 5.0826538415 $ & $ (-7, 11) $ \\
        & $ s602 $ & $ 5.0826538415 $ & $ (-2, 1) $ \\ 
        & $ v1203 $ & $ 5.0990348432 $ & $ (-17, 11) $ \\
        & $ v1204 $ & $ 5.0990348432 $ & $ (3, 4) $ \\ 
        & $ s621 $ & $ 5.1062718035 $ & $ (-5, 2) $ \\
        & $ s622 $ & $ 5.1062718035 $ & $ (8, 11) $ \\ 
        & $ o9\_05655 $ & $ 5.1111665875 $ & $ (8, 9) $ \\
        & $ o9\_05656 $ & $ 5.1111665875 $ & $ (7, 6) $ \\ 
        & $ s637 $ & $ 5.1273136230 $ & $ (19, 12) $ \\
        & $ s638 $ & $ 5.1273136230 $ & $ (-1, 3) $ \\ 
        & $ s647 $ & $ 5.1379412019 $ & $ (-5, 3) $ \\ 
        & $ t02727 $ & $ 5.1401504513 $ & $ (-8, 7) $ \\ 
        & $ t02728 $ & $ 5.1401504513 $ & $ (16, 11) $ \\ 
        & $ v1252 $ & $ 5.1503497145 $ & $ (9, 11) $ \\
        & $ v1253 $ & $ 5.1503497145 $ & $ (-7, 5) $ \\ 
        & $ s660 $ & $ 5.1549263093 $ & $ (-8, 13) $ \\
        & $ s661 $ & $ 5.1549263093 $ & $ (-1, 2) $ \\ 
        & $ v1263 $ & $ 5.1621342201 $ & $ (-6, 1) $ \\
        & $ v1264 $ & $ 5.1621342201 $ & $ (-9, 13) $ \\ 
        & $ t02749 $ & $ 5.1676956678 $ & $ (4, 5) $ \\
        & $ t02750 $ & $ 5.1676956678 $ & $ (-20, 13) $ \\ \hline \hline 
        $L_9$ & $ v1284 $ & $ 5.1799776154 $ & 
        \\
        & $ v1285 $ & $ 5.1799776154 $ & 
        \\ \hline 
    \end{tabular} 
\end{multicols} 
\caption{2-cusped orientable hyperbolic 3-manifolds with low volume. } 
\label{table:volumes} 
\end{table*}

\vfill

\end{document}